\newtheorem{theorem}{Theorem}[section]
\newtheorem{corollary}[theorem]{Corollary}
\newtheorem{lemma}[theorem]{Lemma}
\newtheorem{proposition}[theorem]{Proposition}
\newtheorem{definition}[theorem]{Definition}
\newtheorem{remark}[theorem]{Remark}
\newtheorem{open}[theorem]{Open Problem}
\newtheorem*{mtheorem1}{Main Theorem 1}
\newtheorem*{mtheorem2}{Main Theorem 2}
\newtheorem*{mtheorem3}{Main Theorem 3}
\newtheorem*{mtheorem4}{Main Theorem 4}
\newtheorem*{theorem*}{Theorem}
\newcommand{\R}{\mathbb{R}}
\newcommand{\N}{\mathbb{N}}
\renewcommand{\H}{\mathcal{H}}
\newcommand{\ds}{\displaystyle}
\newcommand{\rmnote}[1]{}
\newcommand{\abs}[1]{{\left|#1\right|}}
\title[Optimality and stability of the radial shapes for the Sobolev trace constant]{Optimality and stability of the radial shapes for the Sobolev trace constant}
\author{Simone Cito$^1$}
\address{$^1$Dipartimento di Matematica e Fisica \lq\lq E. De Giorgi\lq\lq, Università del Salento, Via per Arnesano, 73100 Lecce, Italy.}
\email{simone.cito@unisalento.it}
\begin{document}

\begin{abstract}
In this work we establish the optimality and the stability of the ball for the Sobolev trace operator $W^{1,p}(\Omega)\hookrightarrow L^q(\partial\Omega)$ among convex sets of prescribed perimeter for any $1< p <+\infty$ and $1\le q\le p$. More precisely, we prove that the trace constant $\sigma_{p,q}$ is maximal for the ball and the deficit is estimated from below by the Hausdorff asymmetry. With similar arguments, we prove the optimality and the stability of the spherical shell for the Sobolev exterior trace operator $W^{1,p}(\Omega_0\setminus\overline{\Theta})\hookrightarrow L^q(\partial\Omega_0)$ among open sets obtained removing from a convex set $\Omega_0$ a suitably smooth open hole $\Theta\subset\subset\Omega_0$, with $\Omega_0\setminus\overline{\Theta}$ satisfying a volume and an outer perimeter constraint.

\noindent {MSC 2020:} 35J25, 35P15, 47J30. \\
\textit{Keywords and phrases}: $p$-Laplace operator; Sobolev trace inequality; Quantitative spectral inequality. 
\end{abstract}

\maketitle

\tableofcontents

\section{Introduction}
Let $d\ge 2$, $\Omega\subset\R^d$ be a bounded Lipschitz domain and fix $p>1$. Let us denote by $p^*$ the \emph{critical trace exponent}, that is $p^*=p(d-1)/(d - p)$ if $p<d$ and $p=+\infty$ if $p\ge d$. Let $1\le q<p^*$. The best constant in the Sobolev trace inequality is the largest number $\sigma_{p,q}(\Omega)$ such that
$$
\sigma_{p,q}(\Omega)\|u\|_{L^q(\partial\Omega)}\le\|u\|_{W^{1,p}(\Omega)}
$$
for any $u\in W^{1,p}(\Omega)$; equivalently, $\sigma_{p,q}(\Omega)$ is the reciprocal of the norm of the usual trace operator $T^{p,q}_{\Omega}:W^{1,p}(\Omega)\hookrightarrow L^q(\partial\Omega)$. In view of the compactness of the embedding, the quantity $\sigma_{p,q}(\Omega)$ can be expressed in a variational way as follows 
\begin{equation}
\label{pq_min}		
\sigma_{p,q}(\Omega)=\min_{\substack{\psi\in W^{1,p}(\Omega)\\ \psi\not \equiv0}}\dfrac{\|\psi\|_{W^{1,p}(\Omega)}}{\|\psi\|_{L^q(\partial\Omega)}}=\min_{\substack{\psi\in W^{1,p}(\Omega)\\ \psi\not \equiv0}}\dfrac{\ds\left(\int _{\Omega}|\nabla \psi|^p\;dx+\ds\int_{\Omega}|\psi|^p\;dx\right)^{1/p}}{\left(\ds\int_{\partial \Omega}|\psi|^q \;d\mathcal{H}^{d-1}\right)^{1/q}}.
\end{equation}
If $w\in W^{1,p}(\Omega)$ is a minimizer of \eqref{pq_min}, then it satisfies
\begin{equation}
\label{pq_prob}
\begin{cases}
	-\Delta_p w+|w|^{p-2}w=0 & \mbox{in}\ \Omega\vspace{0.2cm}\\
|\nabla w|^{p-2}\dfrac{\partial w}{\partial \nu}=\lambda|w|^{q-2}w&\mbox{on}\ \partial \Omega\vspace{0.2cm},
\end{cases}
\end{equation}
for some $\lambda>0$ depending on $\|w\|_{L^q(\partial\Omega)}$, where we denote by $ \nu $  the unit  outer normal to the boundary; if $\|w\|_{L^q(\partial\Omega)}=1$, $\lambda=\sigma^p_{p,q}(\Omega)$. 

A minimizer of \eqref{pq_min} or, equivalently, a solution for \eqref{pq_prob}, exists for any $q\in[1,p^*[$ in view of the compactness of the tace operator $T^{p,q}_\Omega$.
When $p=q$, we reduce, in fact, to a Steklov-type nonlinear eigenvalue problem. To be clearer, denoting $\sigma_{p}(\Omega):=\sigma_{p,p}(\Omega)$, we have
\begin{equation}
\label{eig_min}		
\sigma_{p}(\Omega)=\min_{\substack{\psi\in W^{1,p}(\Omega)\\ \psi\not \equiv0}}\dfrac{\|\psi\|_{W^{1,p}(\Omega)}}{\|\psi\|_{L^p(\partial\Omega)}}=\min_{\substack{\psi\in W^{1,p}(\Omega)\\ \psi\not \equiv0}}\left(\dfrac{\ds\int _{\Omega}|\nabla \psi|^p\;dx+\ds\int_{\Omega}|\psi|^p\;dx}{\ds\int_{\partial \Omega}|\psi|^p \;d\mathcal{H}^{d-1}}\right)^{1/p}.
\end{equation}
In this case, any minimizer $w\in W^{1,p}(\Omega)$ of \eqref{eig_min} satisfies
\begin{equation}
\label{eig_prob}
\begin{cases}
	-\Delta_p w+|w|^{p-2}w=0 & \mbox{in}\ \Omega\vspace{0.2cm}\\
|\nabla w|^{p-2}\dfrac{\partial w}{\partial \nu}=\lambda|w|^{p-2}w&\mbox{on}\ \partial \Omega\vspace{0.2cm},
\end{cases}
\end{equation}
for some $\lambda>0$ depending on $\|w\|_{L^p(\partial\Omega)}$; if $\|w\|_{L^p(\partial\Omega)}=1$, $\lambda=\sigma^p_{p}(\Omega)$.


Our aim is to optimize $\sigma_{p,q}(\cdot)$ in a a suitable class of sets, focusing on the trace term in the choice of the constraint. In particular, we prove that the ball is maximal among convex sets with prescribed perimeter. In addition, we provide a quantitative enhancement of the result in terms of an appropriate asymmetry functional, i.e. a non negative functional $\Omega\mapsto\alpha(\Omega)$ vanishing only if $\Omega$ itself is a ball.

The main results of the first part of the paper are the following. The first is the maximality of the ball for $\sigma_{p,q}$.

\begin{mtheorem1}[Isoperimetric inequality for convex sets]\label{teo_1}
Let $p\in]1,+\infty[$ and $q\in[1,p]$, let $\Omega\subset\R^d$ be convex and $\Omega^\star$ be the ball having the same perimeter as $\Omega$.
Then
\begin{equation}\label{ineq_eig}
\sigma_{p,q}(\Omega^\star)\ge\sigma_{p,q}(\Omega),
\end{equation}
with the equality holding if and only if $\Omega$ is a ball.
\end{mtheorem1}

The second result is the stability of inequality \eqref{ineq_eig}. We slightly modify the notation from the previous statement in order to emphasise the dependence of the constants on the size of the involved sets.

\begin{mtheorem2}[Stability of the ball]\label{teo_2}
Let $0<R<+\infty$, $p\in]1,+\infty[$ and $q\in[1,p]$. There exist two positive constants $C(p,q,d,R)$ and $\delta_0(p,q,d,R)$ such that, for every convex set $\Omega$ with $P(\Omega)=P(B_R)$, if $\sigma_{p,q}(B_R)-\sigma_{p,q}(\Omega)\leq \delta_0$, then $\Omega$ is nearly spherical and
\begin{equation}
\label{stab_ineq_intro2}
\sigma_{p,q}(B_R)-\sigma_{p,q}(\Omega)\geq C g(\mathcal{A}^\star_{\mathcal{H}}(\Omega)),
\end{equation}
    where $\mathcal{A}^\star_{\mathcal{H}}$ is the Hausdorff asymmetry index defined in \eqref{eq:ahstar} and $g$ is defined by
    \begin{equation}
    \label{function_g}
        g(s)=
        \begin{cases}
            s^2 & \text{ if } d=2\\
            f^{-1}(s^2) & \text{ if } d=3\\
            s^{\frac{d+1}{2}} & \text{ if } d\geq 4
        \end{cases}
    \end{equation}
    with $f(t)= \sqrt{t \log(\frac{1}{t})}$ for $0 < t < e^{-1}$.
\end{mtheorem2}

The quantitative inequalities for the classical Steklov problem have been studied, e.g., in \cite{brasco2012spectral,gavitone2020quantitative}. The stability of the ball for the least eigenvalue $\sigma_p^p$ of the Steklov-type problem \eqref{eig_prob} has been treated among convex nearly spherical sets sets in the linear case $p=2$ in \cite{ferone2015conjectured,cito2021quantitative}.

\medskip

Actually, the aim of the paper is twofold. Indeed, we ask ourselves whether analogous results can be achieved for a similar functional defined on domains with holes. Let us consider $\Omega_0\subset\R^d$ open, bounded and convex and $\Theta\subset\subset\Omega_0$ a nonempty open set smooth enough to have a normal unit at least in a weak sense. We consider the functional
\begin{equation}
\label{pqbuco_min}		
\tilde{\sigma}_{p,q}(\Omega_0\setminus\overline{\Theta})=\min_{\substack{\psi\in W^{1,p}(\Omega_0\setminus\overline{\Theta})\\ \psi\not \equiv0}}\dfrac{\|\psi\|_{W^{1,p}(\Omega_0\setminus\overline{\Theta})}}{\|\psi\|_{L^q(\partial\Omega_0)}}=\min_{\substack{\psi\in W^{1,p}(\Omega_0\setminus\overline{\Theta})\\ \psi\not \equiv0}}\dfrac{\ds\left(\int _{\Omega_0\setminus\overline{\Theta}}|\nabla \psi|^p\;dx+\ds\int_{\Omega_0\setminus\overline{\Theta}}|\psi|^p\;dx\right)^{1/p}}{\left(\ds\int_{\partial \Omega_0}|\psi|^q \;d\mathcal{H}^{d-1}\right)^{1/q}}.
\end{equation}
The positive quantity $\tilde{\sigma}_{p,q}(\Omega_0\setminus\overline{\Theta})$ is again linked to a Sobolev trace inequality. More precisely, we deal with the \emph{exterior} trace operator $\tilde{T}^{p,q}_{\Omega_0\setminus\overline{\Theta}}:W^{1,p}(\Omega_0\setminus\overline{\Theta})\hookrightarrow L^q(\partial\Omega_0)$, mapping a function in $W^{1,p}(\Omega_0\setminus\overline{\Theta})$ onto its trace \emph{only} on $\partial\Omega_0$. In this framework, $\tilde{\sigma}_{p,q}(\Omega_0\setminus\overline{\Theta})$ is the largest number such that
$$
\tilde{\sigma}_{p,q}(\Omega_0\setminus\overline{\Theta})\|u\|_{L^q(\partial\Omega_0)}\le\|u\|_{W^{1,p}(\Omega_0\setminus\overline{\Theta})}
$$
for any $u\in W^{1,p}(\Omega_0\setminus\overline{\Theta})$, i.e. it is the reciprocal of the operator norm of $\tilde{T}^{p,q}_{\Omega_0\setminus\overline{\Theta}}$. The minimizers of \eqref{pqbuco_min} are weak solutions of the nonlinear problem
\begin{equation}
\label{pqbuco_prob}
\begin{cases}
	-\Delta_p w+|w|^{p-2}w=0 & \mbox{in}\ \Omega_0\setminus\overline{\Theta}\vspace{0.2cm}\\
|\nabla w|^{p-2}\dfrac{\partial w}{\partial \nu}=\lambda|w|^{q-2}w&\mbox{on}\ \partial \Omega_0\vspace{0.2cm}\\
|\nabla w|^{p-2}\dfrac{\partial w}{\partial \nu}=0&\mbox{on}\ \partial \Theta\vspace{0.2cm},
\end{cases}
\end{equation}
with $\lambda>0$ depending again on the trace norm $\|w\|_{L^q(\partial\Omega_0)}$, with $\lambda=\tilde{\sigma}^p_{p,q}(\Omega_0\setminus\overline{\Theta})$ if $\|w\|_{L^q(\partial\Omega_0)}=1$. 

Let us complete this overview with the case $p=q$. Setting $\tilde{\sigma}_{p}(\Omega_0\setminus\overline{\Theta}):=\tilde{\sigma}_{p,p}(\Omega_0\setminus\overline{\Theta})$, we have the following variational characterization
\begin{equation}
\label{eig_minbuco}		
\tilde{\sigma}_{p}(\Omega_0\setminus\overline{\Theta})=\min_{\substack{\psi\in W^{1,p}(\Omega)\\ \psi\not \equiv0}} \left(\dfrac{\ds\int _{\Omega_0\setminus\overline{\Theta}}|\nabla \psi|^p\;dx+\ds\int_{\Omega_0\setminus\overline{\Theta}}|\psi|^p\;dx}{\ds\int_{\partial \Omega_0}|\psi|^p \;d\mathcal{H}^{d-1}}\right)^{1/p} 
\end{equation}
and minimizers solve the nonlinear Steklov-Neumann nonlinear eigenvalue problem
\begin{equation}
\label{eig_probbuco}
\begin{cases}
	-\Delta_p w+|w|^{p-2}w=0 & \mbox{in}\ \Omega_0\setminus\overline{\Theta}\vspace{0.2cm}\\
|\nabla w|^{p-2}\dfrac{\partial w}{\partial \nu}=\lambda|w|^{p-2}w&\mbox{on}\ \partial \Omega_0\vspace{0.2cm},\\
|\nabla w|^{p-2}\dfrac{\partial w}{\partial \nu}=0&\mbox{on}\ \partial \Theta\vspace{0.2cm}
\end{cases}
\end{equation}
where $\lambda=\tilde{\sigma}^p_{p}(\Omega_0\setminus\overline{\Theta})$ in case of normalization $\|w\|_{L^p(\partial\Omega_0)}=1$.

The maximality of the spherical shell is proved in the following

\begin{mtheorem3}[Isoperimetric inequality for holed sets]\label{teo_3}
Let $p\in]1,+\infty[$, $q\in[1,p]$ and let $\Omega_0\subset\R^d$ be convex and  $\Theta\subset\subset\Omega_0$ an open set sufficiently small for the weak well posedness of Problem \eqref{pqbuco_prob}. Let $A_{R_1,R_2}$ be the spherical shell such that $P(\Omega_0)=P(B_{R_2})$ and $|\Omega_0\setminus\overline{\Theta}|=|A_{R_1,R_2}|$.
Then
\begin{equation}
\label{bucostab_ineq_intro1}
\tilde{\sigma}_{p,q}(A_{R_1,R_2})\ge\tilde{\sigma}_{p,q}(\Omega_0\setminus\overline{\Theta}),
\end{equation}
with the equality holding if and only if $\Omega_0\setminus\overline{\Theta}=A_{R_1,R_2}$.
\end{mtheorem3}

Once approached the study of the isoperimetric result above, we asked ourselves about a possible quantitative enhancement of the previous theorem. We started our analysis referring to \cite{cpp}. That paper, as far as we are aware, provides the first quantitative result within the setting of domains featuring holes where \emph{both} the outer and inner boundaries are perturbed independently. In that research an asymmetry functional for sets of the form $\Omega_0\setminus\overline{\Theta}$ has been introduced. This functional is designed to handle two distinct boundary perturbations, with $\partial\Omega_0$ and $\partial\Theta$ governed by different boundary conditions and with $\Omega_0\setminus\overline{\Theta}$ required to meet specific constraints (namely, the outer perimeter and the volume). We will introduce the so-called \emph{hybrid} asymmetry $\alpha_{hyb}(\cdot)$ in Definition \ref{inner_asymmetry_def} and delve into the motivation of that choice in Section 2.

Let us introduce the framework for the stability result. Let $0<R_1<R_2<+\infty$ and denote by $\vartheta_{R_1,R_2}$ a positive constant such that $\vartheta_{R_1,R_2}\le\min\{R_1,R_2-R_1\}/2$.  We set
\begin{equation}
\label{admissiblesets}
\begin{split}\mathcal T_{R_1,R_2}= \{\Omega\subseteq\R^d :&\ \Omega=\Omega_0\setminus\overline{\Theta},\  \Omega_0,\Theta\ \text{convex},\ |\Omega|=|A_{R_1,R_2}|,\ P(\Omega_0)=P(B_{R_2}),\\ & d_\mathcal{H}(\Theta,\Omega_0)\ge\vartheta_{R_1,R_2}\ \text{and}\ \rho(\Theta)\ge\vartheta_{R_1,R_2}\},
\end{split}
\end{equation}
 where  $\rho(\cdot)$ is the inradius and $d_{\mathcal H}(\cdot, \cdot )$ is the Hausdorff distance. We prove the following stability result in $\mathcal{T}_{R_1,R_2}$.

\begin{mtheorem4}[Stability of the spherical shell]\label{teo_4}
Let $0<R_1<R_2<+\infty$, $p\in]1,+\infty[$ and $q\in[1,p]$. There exist two positive constants $C(p,q,d,R_1,R_2)$ and 
$\delta_0(p,q,d,R_1,R_2)$ such that, for every open set $\Omega_0\setminus\overline{\Theta}\in\mathcal{T}_{R_1,R_2}$, if $\tilde{\sigma}_{p,q}(A_{R_1,R_2})-\tilde{\sigma}_{p,q}(\Omega_0\setminus\overline{\Theta})\leq \delta_0$, then $\Omega_0\setminus\overline{\Theta}$ is $(R_1,R_2)$-nearly annular and
\begin{equation}
\label{bucostab_ineq_intro2}
\tilde{\sigma}_{p,q}(A_{R_1,R_2})-\tilde{\sigma}_{p,q}(\Omega_0\setminus\overline{\Theta})\geq C \alpha_{hyb}(\Omega_0\setminus\overline{\Theta}),
\end{equation}  
where $\alpha_{hyb}(\cdot)$ is the hybrid asymmetry functional in Definition \ref{inner_asymmetry_def}.
\end{mtheorem4}

The study of $\tilde{\sigma}_{p,q}$ is mainly motivated by the following facts. On one hand, the problem of the optimal constant for the exterior trace has been widely studied imposing a \emph{Dirichlet} condition on $\partial\Theta$ (see, e.g., \cite{BGR}), so we found reasonable to analyse what happens replacing it with a \emph{Neumann} condition on the hole. On the other hand, $\tilde{\sigma}_{p,q}$ is linked to the nonlinear Robin-Neumann eigenvalue with negative boundary parameter $\tilde{\lambda}_p(\cdot,\beta)$.  This latter admits $A_{R_1,R_2}$ as unique maximizer (see \cite{paoli2020sharp} for the proof), but its stability (conjectured in \cite[Open Problem 5.2]{cpp}) remained an open problem. We obtain this quantitative result as an adaptation of the proof of Main Theorem 4. Regarding the case $p=q$ (and in particular $p=q=2$), a number of authors have recently been exploring the Laplace and $p$-Laplace eigenvalue problem in the scenario where the outer and inner boundaries are subject to different conditions. We cite, first of all, the pioneeristic paper \cite{payne1961some}, which paved the way to study this kind of mixed problems. Among the others, considering first the boundary condition on $\partial\Omega_0$ and then on $\partial\Theta$, we cite the Robin-Neumann case in \cite{paoli2020sharp}, the Neumann-Robin case (with positive boundary parameter) \cite{dellapietra2020optimal,hersch1962contribution}, the Dirichlet-Neumann case \cite{anoop2023domain,anoop2021shape},the Dirichlet-Dirichlet case\cite{biswas2021optimization,chorwadwala2022optimal},
the Steklov-Dirichlet case \cite{ftouhi2022place,
gavitone2021isoperimetric,gavitone2024monotonicity, hong2022first, paoli2021stability,
verma2020eigenvalue} and the Steklov-Robin case \cite{gavitone2023steklov}. Recently, a comparison Talenti type result has been studied in the same class of sets, see \cite{barbato2025talenticomparisonresultssolutions}.

On the other hand, the quantitative versions of the spectral inequalities on holed domain are still in the early stages of development. We mention \cite{paoli2021stability}, where a result in this direction is proved for the first Steklov-Dirichlet Laplacian eigenvalue among the class of holed domains where the inner hole $\Theta$ is a given ball well cointained in $\Omega_0$. The inner ball can be only translated and so the only perturbation of the boundary acts on $\partial\Omega_0$. Up to our knowledge, the first result in which the perturbation acts independently on both boundaries is presented in \cite{cpp}. Recently, in \cite{amabar}, the stability of the annulus for the torsion of multiply connected domains has been given in terms of the Fraenkel Asymmetry.  

We point out that for both the convex and the holed case, the results depend on the size of the admissible sets; this is evident in particular in the stability results, since the constants in Main Theorem 2 and in Main Theorem 4 are given in terms of the radius of the ball and in terms of the radii of the spherical shell, respectively. Even the fact that we consider $q$ only in $[1,p]$ and not on the whole compactness interval $[1,p^*[$ depends on a technical issue deeply linked to the size of the admissible sets. Indeed, we will see in Proposition \ref{pro:radball} and Proposition \ref{pro:radbuco} that problems \eqref{pq_prob} and \eqref{pqbuco_prob} have radial solution respectively on $B_R$ and $A_{R_1,R_2}$, regardless of the radii, only if $q\in[1,p]$. Now, the proofs of the main results rely on the well established web function method, based on the construction of a suitable test function by dearranging a positive \emph{radial} minimizer for the ball or the spherical shell. We will go into detail about the properties of the minimizers for radial shapes and about the "radiality thresholds" in Section \ref{subsec_radial}. In particular, in Remark \ref{rem:rad} we point out that our choice to present the results for $p\in]1,+\infty[$ and $q\in[1,p]$ regardless of the radii is motivated by our shape-optimization focused perspective, even if the main theorems hold in a more general range of parameters.

For what concerns the stability of the exterior trace constant for the domains with holes, the main difference from \cite{cpp} is that here we do not have explicit minimizers as  in the linear case. More specifically, the eigenfunction for $\tilde{\sigma}^2_2$ is a linear combination of two Bessel functions, whereas in the general nonlinear case we do not have an explicit representation of the minimizing function. Then, in order to apply our dearrangement technique, it is needed to check carefully all the functional properties (radial simmetry and radial monotonicity) required. 
Moreover, as already noticed in \cite{cpp}, the inner hole with Neumann boundary condition cannot be treated as in \cite{fuglede1989stability}, since a cancellation of the inner boundary integrals occurs due to the boundary condition. So, it seems that we cannot use the standard Fuglede approach to treat either the asymmetry of the outer boundary or the deviation of the hole. We will go in detail to present the hybrid asymmetry functional: even if it seems far from being optimal and, at a first sight, a bit artificial, it matches with our purposes. Indeed, we were mainly focused on finding the optimal shapes for $\tilde{\sigma}_{p,q}$ and, for a possible quantitative enhancement, we were only looking for an actual asymmetry functional (i.e. a positive quantity equal to zero if and only if $\Omega_0\setminus\overline{\Theta}=A_{R_1,R_2}$)  to quantify the distance from the spherical shell. Even if we spend several pages for its definition, motivation and meaning, $\alpha_{hyb}$ is not the core of our discussion.

We point out that also in this case, since we have to handle two different constrains, the hybrid asymmetry functional is the result of two different terms. The appearence of a mixed asymmertry as a combination of one term per each object seems to be natural in problems in which you can perturb more features independently. In addition to the already cited \cite{cpp}, see also \cite{amato2023talenti}, where the authors combine three asymmetry terms to control, respectively, the symmetry of the domain, the symmetry of the datum and the symmetry of the solution for a Poisson equation. 

\bigskip

The structure of this paper is the following. In Section \ref{notations_sec} we recall some of the basic notation, we state (prove, if new) some useful properties of the solution of the nonlinear problem and recall some geometric tools. In Section \ref{sec:hybrid} we introduce the hybrid asymmetry functional $\alpha_{hyb}(\cdot)$. In Section \ref{main_sec_conv} we provide the proofs of Main Theorems 1 and 2. In Section \ref{main_sec_buco} we prove Main Theorems 3 and 4, going in detail on the arguments to get the quantitative estimate. In Section \ref{rem_sec}, some consequences of Main Theorems 2 and 4 are given in terms of stability of Robin spectral problems with negative boundary parameter; finally, some open problems are discussed.

\section{Notation and Preliminary results}
\label{notations_sec}

\subsection{Notations and basic tools of measure theory}
Throughout this paper, we denote by $|\cdot|$ and $\mathcal{H}^k(\cdot)$, the $d-$dimensional Lebesgue measure and the $k-$dimensional Hausdorff measure in $\mathbb{R}^d$, respectively. The unit open  ball in $\mathbb{R}^d$ will be denoted by $B_1$ and the unit sphere in $\mathbb{R}^d$ by $\mathbb{S}^{d-1}$; we also define $\omega_d:=|B_1|$.  
More generally, we denote with $B_r(x_0)$, that is the ball centered at $x_0$ with radius $r$, and, for any $0<R_1<R_2<+\infty$, we denote by  $A_{R_1,R_2}$ the spherical shell $B_{R_2}\setminus \overline{B_{R_1}}$. Furthermore, we denote by $\chi_{E}$ the characteristic function of a measurable set $E$ and, for any measurable funciton $f\ge 0$, by $Vol_f(E)$ its weighted volume $\int_Ef\:dx$.

When not specified, $\Omega_0\setminus \overline{\Theta}$ will denote a subset of $\mathbb{R}^d$, $d\geq 2$, such that $\Omega_0$ is an open, bounded and convex set and $\Theta$ is an open set such that $\overline{\Theta}\subset \Omega_0$ and sufficiently smooth to define (at least in a weak sense) a nonlinear Neumann boundary condition.

The distance of a point  from the boundary $\partial\Omega_0$ is the function 
\begin{equation*}
d(x)= \inf_{y\in \partial \Omega_0} |x-y|, \quad x\in \overline{\Omega_0},
\end{equation*}
moreover, the inradius of $\Omega_0$ is the radius of the largest ball contained in $\Omega_0$, i.e.
\begin{equation*}
\rho({\Omega_0})=\max \{d(x),\; x\in\overline{\Omega_0}\}.
\end{equation*}
We recall now some basic notions about the Hausdorff distance. If $E,F$ are two convex sets in  $\mathbb{R}^d$, the Hausdorff distance between $E, F$ is defined as
\[
d_{\mathcal{H}}(E,F) := \inf\left\{ \varepsilon>0 \, : \,  E \subset F + \varepsilon B_1 , \, F \subset E + \varepsilon B_1  \right\},
\]
where $F + \varepsilon B_1$ is intended in the sense of the Minkowski sum.

We will denote by $E^\sharp$ and $E^\star$ the balls having respectively the same volume and the same perimeter as $E$.

Since we deal with a perimeter constraint, we recall its rigorous definition.
\begin{definition}
Let $E\subseteq\mathbb{R}^d$ be measurable and let $\Omega\subseteq\mathbb{R}^d$ be open. We define the perimeter of $E$ in $\Omega$ as
$$P(E,\Omega):=\sup\left\{\int_E\text{div}(\varphi)\:dx : \varphi\in C^1_c(\Omega;\mathbb{R}^d),\|\varphi\|_\infty\le 1\right\}$$
and we say that $E$ is of finite perimeter in $\Omega$ if $P(E,\Omega)<+\infty$. If $\Omega=\mathbb{R}^d$ we simply say that $E$ is of finite perimeter and denote its perimeter by $P(E)$.
\end{definition}

Let us recall that, if $E$ is sufficiently regular (e.g. if $E$ is a bounded Lipschitz domain, in particular a convex set), it holds $P(E,\Omega)=\mathcal{H}^{d-1}(\partial E\cap\Omega)$.

An important tool for our proofs is the following.

\begin{theorem}[Coarea formula]
    Let $\Omega\subset\R^d$ be a measurable set, $f:\Omega\to\R$ be a Lipschitz function and let $u:\Omega\to\R$ be a measurable function. Then,
    \begin{equation}
        \label{coarea}
        \int_{\Omega} u(x) \lvert \nabla f(x) \rvert \, dx=\int _{\mathbb {R} }dt\int_{(\Omega\cap f^{-1}(t))}u(y)\, d\mathcal{H}^{d-1}(y).
    \end{equation}
\end{theorem}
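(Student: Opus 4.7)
The plan is to establish the identity by the standard three-step reduction, followed by a smoothing argument, along the lines of the Evans--Gariepy / Federer treatment. First, by splitting $u = u_+ - u_-$ and approximating each nonnegative measurable piece from below by increasing sequences of simple functions, linearity and monotone convergence reduce the claim to the case $u=\chi_E$ for an arbitrary measurable $E\subseteq \Omega$. The target then becomes the purely geometric identity
$$
\int_E |\nabla f(x)|\,dx = \int_{\R} \H^{d-1}\bigl(E\cap f^{-1}(t)\bigr)\,dt,
$$
so all that remains is a level-set statement for the Lipschitz function $f$.

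Second, I would handle the smooth case $f\in C^1(\Omega)$. On the open set $U=\{|\nabla f|>0\}$, the implicit function theorem lets me locally straighten the level sets: around each point of $U$ one finds a $C^1$ diffeomorphism under which $f$ becomes the first coordinate. On such a chart the change-of-variables formula combined with Fubini immediately yields the identity, and a partition of unity glues these local identities together. On the complementary set $\{|\nabla f|=0\}$ the left-hand side vanishes trivially, while Sard's theorem guarantees that $f(\{|\nabla f|=0\})$ has one-dimensional Lebesgue measure zero, so the right-hand side picks up nothing there either.

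Third, for a general Lipschitz $f$ I would approximate by mollification $f_k = f*\rho_{1/k}$. Rademacher's theorem supplies the a.e.\ differentiability of $f$ with $\|\nabla f\|_{\infty}\leq \mathrm{Lip}(f)$, and one has $\nabla f_k \to \nabla f$ a.e.\ and in $L^1_{\mathrm{loc}}$, so dominated convergence passes the left-hand side to the limit. The hard part, as always, is the right-hand side, where one must control $\H^{d-1}(E\cap f_k^{-1}(t))$ uniformly in $k$ and pass to the limit in $t$: level sets may deform in a delicate way under smoothing. The cleanest resolution is to invoke the BV coarea identity $\int |\nabla g|\,dx = \int_\R P(\{g>t\})\,dt$ applied to cutoffs of $f$, equivalently Federer's general coarea formula for Lipschitz maps into $\R$; combined with the lower semicontinuity of the perimeter under $L^1$ convergence and a Fatou-type argument in $t$, it upgrades the $C^1$ identity to the Lipschitz one and closes the proof.
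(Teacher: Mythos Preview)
The paper does not prove this statement at all: it is quoted as a classical tool and the reader is referred to \cite{AFP} for background on sets of finite perimeter and the coarea formula. There is therefore no ``paper's own proof'' to compare against; your submission is an independent sketch of a textbook result.

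On the substance of your sketch: steps one and two (reduction to characteristic functions, and the $C^1$ case via local straightening plus Sard) are the standard opening moves and are fine. Step three, however, is where the real content of the Lipschitz coarea formula lies, and your treatment is too glib. Mollifying $f$ and passing to the limit on the right-hand side is exactly the point that does \emph{not} go through by soft arguments: the maps $t\mapsto \H^{d-1}(E\cap f_k^{-1}(t))$ need not converge in any useful sense, and lower semicontinuity of perimeter plus Fatou only gives one inequality. Your proposed fix---``invoke the BV coarea identity, equivalently Federer's general coarea formula for Lipschitz maps into $\R$''---is essentially the statement you are trying to prove; at best it trades the problem for De Giorgi's structure theorem (to identify $P(\{f>t\})$ with $\H^{d-1}(\partial^*\{f>t\})$) together with a nontrivial argument that $\partial^*\{f>t\}=f^{-1}(t)$ for a.e.\ $t$. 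The honest routes are either the two-sided estimate in Evans--Gariepy (covering argument for $\le$, isodiametric/Eilenberg inequality for $\ge$) or Federer's area-formula machinery; neither is the smoothing-and-limit argument you outlined.
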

To get acquainted about the sets of finite perimeter, the coarea formula and related topics see, for instance, \cite{AFP}.

\subsection{The nonlinear problem: properties of the solutions}
\label{subsec_radial}

In this section we collect some basic properties of the minimizers of \eqref{pq_min} and \eqref{pqbuco_min}. In particular, we will focus on the case when the convex domain for $\sigma_{p,q}$ is a ball and the domain with a hole for $\tilde{\sigma}_{p,q}$ is a spherical shell. We omit the proofs that appear straightforward and present those we deem more significant.

We start with an overview about the case $p=q$, analyzing $\sigma_{p}(\Omega)$ and the minimizers of \eqref{eig_min}; the proofs are not given here as they can be found in \cite{MarRossi}.

\begin{proposition}\label{pro:eigball}
Let $1<p<+\infty$ and let $\Omega\subset\R^d$ be a bounded Lipschitz domain. The following items hold.
\begin{itemize}
\item[(i)] There exists a minimizer $w\in W^{1,p}(\Omega)$ of \eqref{eig_min},  which is a weak solution to \eqref{eig_prob}. 
\item[(ii)]  $\sigma^p_{p}(\Omega)$ is simple, i.e. all the associated eigenfunctions are scalar multiple of each other and do not change sign. Moreover, $\sigma^p_{p}(\Omega)$ is isolated.
\item[(iii)] Let $R>0$ and let $z_p$ be the positive minimizer of problem \eqref{eig_min} on the ball $B_R$. Then, $z_p$ is radially symmetric and increasing, in the sense that $z_p(x)=\Psi_p(|x|)$, with $\Psi_p'(r)>0$ in $]0,R[$.
\end{itemize}
\end{proposition}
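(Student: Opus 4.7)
The plan is to handle the three items via standard tools in the calculus of variations and the theory of nonlinear eigenvalues for the $p$-Laplacian. For item (i), I would apply the direct method to the Rayleigh quotient in \eqref{eig_min}. Take a minimizing sequence $\{\psi_n\}\subset W^{1,p}(\Omega)$ normalized by $\|\psi_n\|_{L^p(\partial\Omega)}=1$; the numerator then bounds $\|\psi_n\|_{W^{1,p}(\Omega)}$, yielding a weakly convergent subsequence by reflexivity. The compactness of the trace $W^{1,p}(\Omega)\hookrightarrow L^p(\partial\Omega)$ (valid since $p<p^*$) together with the weak lower semicontinuity of the $W^{1,p}$ norm forces the weak limit $w$ to be an admissible minimizer. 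Testing the quotient against perturbations $w+t\varphi$ and differentiating at $t=0$ produces the weak formulation of \eqref{eig_prob} with $\lambda=\sigma_p^p(\Omega)$.

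For item (ii), I would invoke the classical machinery for the first nonlinear Steklov-type eigenvalue of the $p$-Laplacian. Simplicity rests on a Picone-type identity: given two positive minimizers $w_1,w_2$, one tests the weak form of the equation for $w_1$ against $w_2^p/(w_1+\eps)^{p-1}$ (and symmetrically), lets $\eps\to 0^+$, and uses the hidden convexity of the Dirichlet energy in the variable $\psi^p$ to conclude that $w_1$ and $w_2$ are proportional. Isolatedness then follows by contradiction: a sequence of eigenvalues $\lambda_n\searrow\sigma_p^p(\Omega)$ would produce normalized eigenfunctions strongly convergent in $L^p(\partial\Omega)$ to a positive first eigenfunction, contradicting the fact that eigenfunctions associated with $\lambda_n>\sigma_p^p(\Omega)$ must change sign. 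The systematic treatment is carried out in \cite{MarRossi}.

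For item (iii), I would combine rotational invariance with the simplicity just established. Since the Rayleigh quotient on $B_R$ is invariant under $SO(d)$, for every rotation $\mathcal{R}$ the function $z_p\circ\mathcal{R}$ is again a positive minimizer with the same $L^p(\partial B_R)$-norm, and simplicity forces $z_p\circ\mathcal{R}=z_p$, so $z_p(x)=\Psi(|x|)$. The profile $\Psi\in C^1([0,R])$ then solves the radial form of \eqref{eig_prob},
\[
\bigl(r^{d-1}|\Psi'|^{p-2}\Psi'\bigr)'=r^{d-1}\Psi^{p-1}\quad\text{on }(0,R),
\]
with $\Psi'(0)=0$ and the Steklov-type boundary condition $|\Psi'(R)|^{p-2}\Psi'(R)=\sigma_p^p(B_R)\,\Psi(R)^{p-1}$. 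Since $\Psi>0$, the right-hand side is strictly positive on $(0,R]$, so $r\mapsto r^{d-1}|\Psi'|^{p-2}\Psi'$ is strictly increasing; vanishing at $r=0$, it stays positive on $(0,R]$, which forces $\Psi'(r)>0$ throughout.

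The substantive obstacle is really item (ii): both the Picone-type comparison and the isolatedness proof require delicate regularity and approximation considerations in the degenerate/singular regimes $p\neq 2$, and here I would rely on the detailed analysis of \cite{MarRossi} rather than reproduce it. Once simplicity is in hand, items (i) and (iii) reduce, respectively, to a textbook direct-method application and a short ODE monotonicity argument exploiting the sign of $\lambda$ in the boundary condition.
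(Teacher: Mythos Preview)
Your proposal is correct and matches the paper's treatment: the paper does not give its own proof of this proposition but simply states that ``the proofs are not given here as they can be found in \cite{MarRossi}'', and your sketch outlines precisely the standard arguments from that reference (direct method for (i), Picone-type identity and sign-change contradiction for (ii), rotational invariance plus simplicity and the radial ODE for (iii)).
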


In the general case $p\neq q$ the situation is more involved. One would expect the ball to continue to have a radial solution. Instead, a very surprising situation occurs: the radiality of the solution is guaranteed only when $1\le q<p$ regardless of the radius; otherwise, it is not. Those results are proved in \cite{DoTo} for $1<q<p$; for the case $q=1<p$, they can be achieved adapting the proofs of \cite{bragonisp}, where the authors treated the case $p=2$. We collect these results in the following proposition.

\begin{proposition}\label{pro:radball}
Let $1<p<+\infty$ and $R>0$ be fixed. Let $z_p$ be the radial positive normalized minimizer for $\sigma_p(B_R)$ and $1\le q<p^*$.
\begin{itemize}
\item[(i)] If there exists a radial minimizer for $\sigma_{p,q}(B_R)$ then it is a multiple of $z_p$.
\item[(ii)] Assume there exists a radial minimizer for $\sigma_{p,q_0}(B_R)$. If $1\le q<q_0$ then any minimizer for $\sigma_{p,q}(B_R)$ is a multiple of $z_p$.
\item[(iii)] Let $1<q<p$. Then the solution of the boundary value problem \eqref{pq_prob} on $B_R$ is
unique and it is a multiple of $z_p$. In particular any minimizer for \eqref{pq_min} on $B_R$ is
a multiple of $z_p$.
\item[(iv)] If $q=1$, $\sigma_{p,1}(\Omega)$ admits unique minimizer for any Lipschitz set $\Omega$; in particular, the minimizer for $\sigma_{p,1}(B_R)$ is radial.
\end{itemize}
\end{proposition}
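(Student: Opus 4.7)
My plan addresses the four items separately, since they rely on rather different ingredients.

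For \emph{(i)}, writing $w(x)=\Psi(|x|)$ the interior equation in \eqref{pq_prob} reduces to the radial ODE
\begin{equation*}
-\bigl(r^{d-1}|\Psi'|^{p-2}\Psi'\bigr)'+r^{d-1}|\Psi|^{p-2}\Psi=0,\qquad r\in(0,R),
\end{equation*}
coupled with the regularity condition $\Psi'(0)=0$. The equation is $(p-1)$-homogeneous in $\Psi$, and the Cauchy problem with data $(\Psi(0),\Psi'(0))=(c,0)$ is uniquely solvable, so its solution set is precisely $\{c\,z_p:c\in\mathbb{R}\}$. Crucially, the exponent $q$ does not appear in the interior equation nor in the condition at $r=0$; it only determines the value of $\lambda$ once the scaling $c$ has been chosen through the boundary condition at $r=R$. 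Hence every radial solution of \eqref{pq_prob} on $B_R$ is a scalar multiple of $z_p$.

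For \emph{(ii)}, the key tool is H\"older's inequality on $\partial B_R$:
\begin{equation*}
\|\psi\|_{L^q(\partial B_R)}\le |\partial B_R|^{1/q-1/q_0}\,\|\psi\|_{L^{q_0}(\partial B_R)},\qquad 1\le q\le q_0,
\end{equation*}
which yields $\sigma_{p,q}(B_R)\ge |\partial B_R|^{1/q_0-1/q}\,\sigma_{p,q_0}(B_R)$. By assumption and by (i), $z_p$ is a minimizer for $\sigma_{p,q_0}(B_R)$; since it is constant on $\partial B_R$ it saturates H\"older, hence is also a minimizer for $\sigma_{p,q}(B_R)$. Conversely, any minimizer $\psi$ for $\sigma_{p,q}(B_R)$ may be chosen nonnegative (since $|\nabla|\psi||=|\nabla\psi|$ a.e.), and equality in H\"older then forces $\psi\equiv c>0$ on $\partial B_R$. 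Therefore $\psi$ solves the Dirichlet problem $-\Delta_p\psi+\psi^{p-1}=0$ in $B_R$ with $\psi=c$ on $\partial B_R$, which admits a unique solution by strict convexity of the $p$-energy; rotational invariance then forces $\psi$ to be radial, and item (i) completes the proof.

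For \emph{(iii)}, with $1<q<p$ the boundary nonlinearity $|w|^{q-2}w$ is strictly sublinear relative to the $p$-growth in the interior, which is precisely the regime where a Picone-type identity yields uniqueness of positive solutions of \eqref{pq_prob} up to a multiplicative constant, as carried out in \cite{DoTo}. The strategy is to test the equations for two positive solutions $u,v$ against admissible combinations such as $v^p/u^{p-1}$ and $u^p/v^{p-1}$, use the classical Picone inequality to handle the interior contribution, and observe that the boundary term coming from the Neumann datum carries a definite sign precisely because $q<p$, forcing $u$ and $v$ to be proportional. Rotational invariance of \eqref{pq_prob} then immediately yields radial symmetry, so every minimizer of \eqref{pq_min} on $B_R$ is a multiple of $z_p$.

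For \emph{(iv)}, we again assume $\psi\ge 0$ so that the constraint $\int_{\partial\Omega}\psi\,d\mathcal{H}^{d-1}=1$ becomes affine. The strictly convex functional $\psi\mapsto\|\psi\|_{W^{1,p}}^p$ on this affine set has a unique minimizer, and uniqueness up to sign extends to sign-changing competitors since $|\psi|$ is also a minimizer with the same energy. On $B_R$ the rotational invariance of the functional combined with this uniqueness immediately forces the minimizer to be radial. The most delicate step is item (iii): the sign analysis in the Picone-type identity is sharp in the strict inequality $q<p$, and the statement genuinely fails in the superlinear regime $q>p$, where symmetry breaking occurs and the dependence on the radius $R$ becomes essential.
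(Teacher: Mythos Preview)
Your proposal is correct and aligns with the approach the paper takes. Note that the paper does not actually give a self-contained proof of this proposition, instead citing \cite{DoTo} for items (i)--(iii) and \cite{bragonisp} for (iv); however, the paper does prove the analogous Proposition~\ref{pro:radbuco} for spherical shells, and your argument matches that proof closely. In particular, your H\"older-inequality argument for (ii) is exactly the paper's strategy (phrased there as a contradiction rather than a direct equality chain), and your strict-convexity argument for (iv) is what the paper calls ``standard tools.'' One minor difference: at the end of (ii) you pass from ``constant on $\partial B_R$'' to ``radial'' via uniqueness of the Dirichlet problem $-\Delta_p\psi+\psi^{p-1}=0$, whereas the paper simply invokes the argument behind item (i) (referencing \cite[Proposition~2]{DoTo}); both routes are valid and essentially equivalent. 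A small point you glossed over: in (ii) the claim is that \emph{any} minimizer is a multiple of $z_p$, so after reducing to $|\psi|$ you should note that $z_p>0$ together with $C^{1,\alpha}$ regularity forces $\psi$ not to change sign.
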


If $p<q<p^*$, the radiality of the solution of \eqref{pq_min} on $B_R$ holds only in a certain range of radii. More precisely, the following proposition holds 
\begin{proposition}[\cite{DoTo}, Theorem 2]\label{pro:radballlarge}
Let $1<p<+\infty$ be fixed and let us denote by $z_p$ the solution of \eqref{eig_prob}.
\begin{itemize}
\item[(i)] Let $r> 0$. There exists $Q(r)\ge p$ such that, if $q > Q(r)$, then there is no radial minimizer for $\sigma_{p,q}(B_r)$.
\item[(ii)] Let $p < q < p^*$. There exists $R(q)$ such that, for any $r> R(q)$ there is no
radial minimizer for $\sigma_{p,q}(B_r)$.
\item[(iii)] There exists a function of the radius $\bar{Q}:]0,+\infty[\to[1,p^*[$ 
such that
\begin{align*}
  q\le \bar{Q}(r)\quad&\Rightarrow\quad \text{any minimizer for $\sigma_{p,q}(B_r)$ is a multiple of $z_p$, then it is radial} \\
  q> \bar{Q}(r)\quad&\Rightarrow\quad \text{there is no radial minimizer for $\sigma_{p,q}(B_r)$} 
\end{align*}
Finally, it holds $p\le \bar{Q}(r)\le {Q}(r)$
\end{itemize}
\end{proposition}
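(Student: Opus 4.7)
The plan is to base everything on the observation, already contained in Proposition \ref{pro:radball}(i), that any radial minimizer of $\sigma_{p,q}(B_r)$ must be a positive multiple of the $p$-Steklov eigenfunction $z_p$. Thus proving non-existence of a radial minimizer is equivalent to showing that $z_p$ (suitably normalized) is not a minimizer for the Rayleigh quotient
\[
\mathcal{R}_{p,q}(u)=\frac{\left(\int_{B_r}|\nabla u|^p+|u|^p\,dx\right)^{1/p}}{\left(\int_{\partial B_r}|u|^q\,d\mathcal{H}^{d-1}\right)^{1/q}}.
\]
The natural tool is a symmetry-breaking (second variation) argument: perturb $z_p$ by a non-radial function $\varphi$ and find a direction in which $\mathcal{R}_{p,q}$ strictly decreases to second order. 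If such a direction exists, then $z_p$ is unstable and hence not a minimizer.

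For part (i), I would set $\varphi(x)=f(|x|)Y_k(x/|x|)$, where $Y_k$ is a spherical harmonic of degree $k\ge 1$ (so that $\varphi\perp z_p$ in every natural sense) and $f$ is a radial profile to be chosen. Writing the Euler--Lagrange condition for $z_p$ with Lagrange multiplier $\Lambda=\sigma_{p,q}^p(B_r)$, the relevant quadratic form is
\[
\mathcal{Q}[\varphi]=J''(z_p)[\varphi,\varphi]-\Lambda K''(z_p)[\varphi,\varphi],
\]
where $J(u)=\int|\nabla u|^p+|u|^p$ and $K(u)=\int_{\partial B_r}|u|^q$. Because $z_p$ is radial, the angular and radial variables decouple: the spherical-harmonic decomposition reduces $\mathcal{Q}[\varphi]$ to an ODE functional in $f$ that depends on $k$, $p$, $q$ and the Laplace--Beltrami eigenvalue $k(k+d-2)$. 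The crucial point is that the boundary term contributes $-\Lambda\,q(q-1)\,z_p(r)^{q-2}f(r)^2\,r^{d-1}$, which is negative and grows quadratically in $q$. Choosing $f$ peaked near $\partial B_r$ (say $f(s)=s^k$), for $q$ large enough relative to $r$ the boundary term dominates and $\mathcal{Q}[\varphi]<0$. This defines the threshold $Q(r)\ge p$ of part (i).

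For part (ii), I would exploit scaling: writing $u(x)=v(x/r)$ transforms the problem on $B_r$ into a problem on $B_1$ with Rayleigh quotient
\[
r^{(d-p)/p-(d-1)/q}\cdot\frac{\left(\int_{B_1}|\nabla v|^p+r^p|v|^p\,dx\right)^{1/p}}{\left(\int_{\partial B_1}|v|^q\,d\mathcal{H}^{d-1}\right)^{1/q}}.
\]
As $r\to+\infty$, the weight $r^p$ penalizes mass in the interior, so boundary-concentrated competitors become increasingly advantageous over $z_p$, whose radial profile is comparatively flat. Quantitatively, one can compare $\mathcal{R}_{p,q}$ on $z_p$ with $\mathcal{R}_{p,q}$ evaluated on a boundary-cap trial function $\eta_\varepsilon$ localized in a geodesic ball of $\partial B_r$, and check that for every $p<q<p^*$ there is $R(q)$ past which $\mathcal{R}_{p,q}(\eta_\varepsilon)<\mathcal{R}_{p,q}(z_p)$ for suitable $\varepsilon$. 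This yields part (ii). The same computation can also be phrased in terms of the second variation used in part (i), with $r$ rather than $q$ as the varying parameter.

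Finally, for part (iii) define
\[
\bar{Q}(r):=\sup\bigl\{q\in[1,p^*)\,:\,\sigma_{p,q}(B_r)\ \text{admits a radial minimizer}\bigr\}.
\]
Proposition \ref{pro:radball}(ii) shows the set on the right is a left-closed interval containing $[1,p]$ (hence $\bar{Q}(r)\ge p$), while (i) gives $\bar{Q}(r)\le Q(r)<p^*$. A standard compactness argument, based on the compact trace embedding and continuity of $\sigma_{p,q}$ in $q$, ensures that the supremum is attained, so radial minimization holds up to and including $\bar{Q}(r)$; combined with Proposition \ref{pro:radball}(i)--(iii), this gives the two implications in (iii). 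The main obstacle in the whole plan is the negativity of $\mathcal{Q}[\varphi]$ in the second-variation step: the linearization of the $p$-Laplacian degenerates at the center of $B_r$ (where $\nabla z_p=0$), so the ODE satisfied by the radial profile $f$ is singular and one must argue carefully that a valid $f$ with the required sign can be constructed, e.g. by localizing near $\partial B_r$ and using cut-off functions whose gradient contribution is controlled by the $k(k+d-2)$ angular factor.
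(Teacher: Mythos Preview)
The paper does not supply a proof of this proposition: it is quoted verbatim from \cite{DoTo} (Theorem 2 there), so there is no ``paper's own proof'' to compare your proposal against. Your task was therefore to reconstruct an argument that the present paper simply imports.

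That said, your outline is the right one and matches the strategy actually used in \cite{DoTo}. The key reduction via Proposition~\ref{pro:radball}(i), that any radial minimizer must be a multiple of $z_p$, so non-radiality is equivalent to instability of $z_p$, is exactly the starting point there. The second-variation analysis with spherical-harmonic perturbations $\varphi=f(|x|)Y_k$ and the observation that the boundary contribution scales like $q(q-1)$ are precisely the mechanism behind (i) and (ii). For (iii), your definition of $\bar Q(r)$ as the supremum of the ``radial'' set, together with Proposition~\ref{pro:radball}(ii) to get that this set is an interval, is again what is done.

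Two points in your sketch deserve more care. First, the degeneracy of the linearized $p$-Laplacian at the center (where $\nabla z_p=0$) that you flag is genuine; in \cite{DoTo} this is handled not by cutting off near the boundary but by working with the weighted Picone-type inequality and an explicit comparison of Rayleigh quotients, which avoids differentiating the functional at a singular point. Second, your claim that the supremum $\bar Q(r)$ is attained ``by a standard compactness argument'' is the most delicate step: one needs that if $z_p$ is a minimizer for a sequence $q_n\uparrow \bar Q(r)$ then it remains a minimizer at $\bar Q(r)$, which requires continuity of $q\mapsto\sigma_{p,q}(B_r)$ \emph{and} stability of the minimizing property under limits. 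This is true but not entirely automatic, and is where \cite{DoTo} spends some effort.
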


\begin{remark}[further radial cases]\label{rem:rad}
In view of Proposition \ref{pro:radballlarge}, the radiality of the solution on the ball still holds when either the radius of the ball or the trace exponent $q$ are sufficiently small. In these further radial cases the proof remains the same as the case $1\le q\le p$, having unfortunately a size restriction for the admissible sets. However, as our primary goal is to present shape optimization results whose validity is regardless on the size of the admissible sets, we will concentrate on the range $1\le q\le p$ and present the results for any positive radius. Nevertheless, we will always keep in mind that the main results of the paper still hold even in the further radial cases where size limitations on the admissible sets exist.
\end{remark}

\medskip

In view of the previous remark, even for the functional $\tilde{\sigma}_{p,q}$, we focus on the cases in which the radiality is guaranteed regardless of the size of $A_{R_1,R_2}$. We are interested in getting the analogous results of Proposition \ref{pro:eigball} and Proposition \ref{pro:radball} for domains with holes. We start with the case $p=q$. We omit the proof since it is a straightforward adaptation of standard arguments; for some refercences, follow, for instance, Section 2.1 in \cite{paoli2020sharp}, in particular Propositions 2.2, 2.3 and 2.5.

\begin{proposition}\label{pro:prop_eigenf_buco}
Let $1<p<+\infty$ let $\Omega_0,\Theta\subset\R^d$ be two Lipschitz domains such that $\Theta\subset\subset\Omega_0$. If $p=q$, the following items hold.
\begin{itemize}
\item[(i)] There exists a minimizer $w\in W^{1,p}(\Omega_0\setminus\overline{\Theta})$ of \eqref{eig_minbuco}, which is a weak solution to \eqref{eig_probbuco}. 
\item[(ii)]  $\tilde{\sigma}^p_{p}(\Omega_0\setminus\overline{\Theta})$ is simple, i.e. all the associated eigenfunctions are scalar multiple of each other and do not change sign. Moreover, $\tilde{\sigma}^p_{p}(\Omega_0\setminus\overline{\Theta})$ is isolated.
\item[(iii)] Let $R>0$ and let $z_p$ be the positive minimizer of problem \eqref{eig_minbuco} on the spherical shell $A_{R_1,R_2}$. Then, $z_p$ is radially symmetric and increasing, in the sense that $z_p(x)=\Psi_p(|x|)$, with $\Psi_p'(r)>0$ in $]R_1,R_2[$.
\end{itemize}
\end{proposition}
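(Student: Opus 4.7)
The plan is to follow the standard blueprint for Steklov-type nonlinear eigenvalue problems, adapted to the mixed Steklov--Neumann setting on a domain with a hole. For (i), I would apply the direct method to the Rayleigh quotient in \eqref{eig_minbuco}: normalizing $\|\psi\|_{L^p(\partial\Omega_0)}=1$, the numerator controls the full $W^{1,p}(\Omega_0\setminus\overline{\Theta})$ norm, so a minimizing sequence $\psi_n$ is bounded and, modulo a subsequence, weakly converges to some $\psi_\infty$. Weak lower semicontinuity of the convex functional $\int(|\nabla\psi|^p+|\psi|^p)$, together with compactness of the trace operator $W^{1,p}(\Omega_0\setminus\overline{\Theta})\hookrightarrow L^p(\partial\Omega_0)$ (valid since $\partial\Omega_0$ is Lipschitz), yields that $\psi_\infty$ is admissible and attains the minimum. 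Computing $\tfrac{d}{dt}|_{t=0}$ along perturbations $\psi+t\varphi$ for arbitrary $\varphi\in W^{1,p}(\Omega_0\setminus\overline{\Theta})$ gives the interior PDE and the Steklov condition on $\partial\Omega_0$; since the trace on $\partial\Theta$ never appears in the functional, the vanishing flux on $\partial\Theta$ emerges as the natural boundary condition.

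For (ii), the Rayleigh quotient depends only on $|w|$ (recall $|\nabla|w||=|\nabla w|$ a.e.), so $|w|$ is a minimizer whenever $w$ is; the strong maximum principle for $-\Delta_p+|\cdot|^{p-2}\cdot$ then forces $|w|>0$ in the interior of $\Omega_0\setminus\overline{\Theta}$. Simplicity is established via a Picone-type identity: for two positive normalized minimizers $u,v$, testing the weak formulation for $u$ against $(v+\varepsilon)^p/(u+\varepsilon)^{p-1}$ and subtracting the symmetric identity for $v$, then sending $\varepsilon\to 0$, produces a pointwise inequality of the form $|\nabla u/u-\nabla v/v|^p\le 0$ in a weighted sense, forcing $u$ and $v$ to be proportional. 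Isolation then follows by contradiction: a sequence of eigenfunctions $w_n$ with eigenvalues $\lambda_n\downarrow\tilde{\sigma}_p$, normalized in $L^p(\partial\Omega_0)$, converges weakly (up to subsequence) to a first eigenfunction $w$; but a non-first $w_n$ must change sign, and a Faber--Krahn-type lower bound on the measure of its nodal components contradicts the strong trace convergence forced by compactness.

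For (iii), radial symmetry is a direct consequence of simplicity combined with the rotational invariance of $A_{R_1,R_2}$: for any $R\in O(d)$, the function $w\circ R$ is again a positive minimizer with the same $L^p(\partial\Omega_0)$-trace norm, hence by simplicity $w\circ R=w$, so $w(x)=\Psi(|x|)$. Substituting into \eqref{eig_probbuco}, $\Psi$ satisfies the ODE
\[
\bigl(r^{d-1}|\Psi'|^{p-2}\Psi'\bigr)'=r^{d-1}\Psi^{p-1}\quad\text{on }(R_1,R_2),
\]
with the Neumann condition $\Psi'(R_1)=0$ and $\Psi>0$ by item (ii). Defining $F(r):=r^{d-1}|\Psi'(r)|^{p-2}\Psi'(r)$, the ODE rewrites as $F'(r)=r^{d-1}\Psi(r)^{p-1}>0$, so $F$ is strictly increasing; since $F(R_1)=0$, this gives $F(r)>0$, equivalently $\Psi'(r)>0$, on all of $(R_1,R_2]$. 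The main obstacle is the simplicity step in (ii): the singular nonlinearity of the $p$-Laplacian makes the Picone manipulation delicate near points where $u$ or $v$ vanish (e.g. on parts of $\partial\Omega_0$ where $w$ might still be zero if the equality case of the maximum principle were not ruled out), and a careful $\varepsilon$-regularization together with a dominated-convergence argument is required; all other items follow by standard adaptations of the arguments in \cite{MarRossi,paoli2020sharp} to the mixed Steklov--Neumann framework.
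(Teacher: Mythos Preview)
Your proposal is correct and follows precisely the standard blueprint the paper itself defers to: the paper omits the proof entirely, pointing to Section~2.1 of \cite{paoli2020sharp} (Propositions~2.2, 2.3, 2.5) and \cite{MarRossi}, and your sketch---direct method for (i), Picone-type identity for simplicity and nodal-domain contradiction for isolation in (ii), simplicity plus rotational invariance and the ODE flux argument for (iii)---is exactly the content of those references adapted to the Steklov--Neumann setting. There is nothing to compare, since the paper gives no independent argument.
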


If $p\neq q$, the radiality of the solution on $A_{R_1,R_2}$ is guaranteed whenever $1\le q< p$, without any restriction on $R_1$ and $R_2$.

\begin{proposition}\label{pro:radbuco}
Let $1<p<+\infty$ and $R_2>R_1>0$ be fixed. Let $z_p$ be the radial positive normalized minimizer for $\tilde{\sigma}_p(A_{R_1,R_2})$ and $1\le q<p^*$.
\begin{itemize}
\item[(i)] If there exists a radial minimizer for $\tilde{\sigma}_{p,q}(A_{R_1,R_2})$ then it is a multiple of $z_p$.
\item[(ii)] Assume there exists a radial minimizer for $\sigma_{p,q_0}(A_{R_1,R_2})$. If $1\le q<q_0$ then any minimizer for $\sigma_{p,q}(A_{R_1,R_2})$ is a multiple of $z_p$.
\item[(iii)] Let $1<q<p$. Then the solution of the boundary value problem \eqref{pqbuco_prob} is
unique and it is a multiple of $z_p$. In particular any minimizer for \eqref{pqbuco_min} is
a multiple of $z_p$.
\item[(iv)] If $q=1$, $\tilde{\sigma}_{p,1}(\Omega_0\setminus\overline{\Theta})$ admits unique minimizer for any $\Omega_0,\Theta\subset\R^d$ Lipschitz with $\Theta\subset\subset\Omega_0$; in particular, the minimizer for $\tilde{\sigma}_{p,1}(A_{R_1,R_2})$ is radial.
\end{itemize}
\end{proposition}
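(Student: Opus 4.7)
My strategy is to mirror the proofs of Proposition~\ref{pro:radball} from \cite{DoTo,bragonisp}, tracking carefully the role of the inner Neumann boundary on $\partial\Theta$. For item (i), let $w$ be a radial minimizer of $\tilde{\sigma}_{p,q}(A_{R_1,R_2})$, taken positive up to a sign. Radiality implies that the trace of $w$ on $\partial B_{R_2}$ is a positive constant $c$, so the boundary condition $|\nabla w|^{p-2}\partial w/\partial \nu = \lambda |w|^{q-2} w$ on $\partial B_{R_2}$ rewrites as $|\nabla w|^{p-2}\partial w/\partial \nu = \tilde\lambda |w|^{p-2}w$ with $\tilde\lambda = \lambda c^{q-p}$. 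The Neumann condition on $\partial B_{R_1}$ is unchanged, so $w$ solves \eqref{eig_probbuco}. Positive eigenfunctions correspond to the first eigenvalue, which is simple by the preceding proposition, hence $w$ is a scalar multiple of $z_p$.

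For item (ii), by (i) a radial minimizer at $q_0$ is proportional to $z_p$, so $z_p$ itself is optimal for $\tilde{\sigma}_{p,q_0}$. Set $C = \mathcal{H}^{d-1}(\partial B_{R_2})^{1/q - 1/q_0}$ and note that, since $z_p$ is constant on $\partial B_{R_2}$, one has $\|z_p\|_{L^q(\partial B_{R_2})} = C\,\|z_p\|_{L^{q_0}(\partial B_{R_2})}$; testing $z_p$ in the variational quotient for $\tilde\sigma_{p,q}$ gives $C\,\tilde\sigma_{p,q} \leq \tilde\sigma_{p,q_0}$. Testing an arbitrary minimizer $w$ of $\tilde\sigma_{p,q}$ in the quotient for $\tilde\sigma_{p,q_0}$ and invoking H\"older's inequality
\[
\|w\|_{L^q(\partial B_{R_2})} \leq C\,\|w\|_{L^{q_0}(\partial B_{R_2})}
\]
yields $\tilde\sigma_{p,q_0} \leq C\,\tilde\sigma_{p,q}$. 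All inequalities are therefore equalities, H\"older saturates, $|w|$ must be constant on $\partial B_{R_2}$, and $w$ is actually a minimizer for $\tilde\sigma_{p,q_0}$; item (i) then identifies $w$ as a multiple of $z_p$.

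For item (iii), I would establish uniqueness of positive solutions of \eqref{pqbuco_prob} (up to scalar multiples) when $1 < q < p$ via a Picone-type identity in the mixed Steklov--Neumann setting. Given two positive solutions $u,v$ with the same $\lambda$, I test their weak formulations against $v^p/u^{p-1}$ and $u^p/v^{p-1}$ respectively; the Neumann contributions on $\partial \Theta$ vanish, and the classical pointwise inequality
\[
|\nabla u|^{p-2}\nabla u \cdot \nabla\!\Big(\tfrac{v^p}{u^{p-1}}\Big) \leq |\nabla v|^p,
\]
combined with the equations satisfied by $u$ and $v$ and summed over the two test functions, yields
\[
\int_{\partial\Omega_0}\bigl(u^{q-p} - v^{q-p}\bigr)\bigl(u^p - v^p\bigr)\, d\mathcal{H}^{d-1} \geq 0.
\]
For $q < p$ the map $t \mapsto t^{q-p}$ is strictly decreasing on $(0,\infty)$, so the integrand is pointwise nonpositive; equality therefore forces $u \equiv v$ on $\partial\Omega_0$, and the equality case of Picone then propagates $u \equiv v$ throughout $\Omega_0 \setminus \overline{\Theta}$.

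For item (iv), rescaling reduces the problem to minimizing the strictly convex functional $u \mapsto \|u\|_{W^{1,p}(\Omega_0 \setminus \overline{\Theta})}^p$ on the convex set $\{u \geq 0 \colon \|u\|_{L^1(\partial\Omega_0)} = 1\}$: the midpoint $(u_1+u_2)/2$ of two nonnegative minimizers still satisfies the constraint by linearity of the $L^1$ norm on nonnegative functions, and strict convexity for $p > 1$ yields $u_1 = u_2$. On $A_{R_1,R_2}$, uniqueness combined with the rotational invariance of the problem forces the minimizer to be radial. I expect the main technical obstacle to be the Picone step in (iii): the test function $v^p/u^{p-1}$ must be verified to lie in $W^{1,p}(\Omega_0 \setminus \overline{\Theta})$ with a well-defined $L^q$ trace, which requires a Hopf-type strong maximum principle ensuring uniform positivity of $u$ up to $\partial\Omega_0$ together with full boundary regularity of $w$, a delicate point due to the coexistence of Steklov and Neumann conditions.
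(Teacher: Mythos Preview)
Your treatment of items (i), (ii) and (iv) is essentially the same as the paper's, with one small imprecision in (ii): at the end you write ``item (i) then identifies $w$ as a multiple of $z_p$'', but item~(i) as stated applies to \emph{radial} minimizers, and at that stage you have only shown that $w$ has constant trace on $\partial B_{R_2}$. This is harmless, since your own proof of (i) uses nothing beyond the trace being constant; the paper phrases it correctly by saying that once $v$ is constant on $\partial B_{R_2}$ one can rerun the argument of (i) (via \cite[Proposition~2]{DoTo}) to conclude.

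The genuine difference is in item (iii). You build a Picone-identity argument to prove uniqueness of positive solutions to \eqref{pqbuco_prob}, which is correct but substantially more work than the paper's route. The paper simply observes that (iii) is a \emph{direct consequence} of (i) and (ii): since $z_p$ is, by definition, a radial minimizer for $\tilde\sigma_{p,p}(A_{R_1,R_2})$, one may take $q_0=p$ in item~(ii), and then for every $1<q<p$ any minimizer of $\tilde\sigma_{p,q}(A_{R_1,R_2})$ is automatically a multiple of $z_p$. No Picone machinery, no Hopf lemma, no delicate test-function regularity is needed for the minimizer conclusion. Your approach does buy something the paper glosses over---it proves uniqueness for \emph{all} positive solutions of the boundary value problem, not just variational minimizers---but it comes at the cost of the technical obstacle you yourself flag (justifying $v^p/u^{p-1}\in W^{1,p}$ with the mixed Steklov--Neumann boundary). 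For the purposes of the paper, which only ever uses the radiality of minimizers, the shortcut via (ii) with $q_0=p$ is both simpler and sufficient.
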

\begin{proof}
First of all, if the minimizer $v$ for $\tilde{\sigma}_{p,q}(A_{R_1,R_2})$ is radial, then it is constant on $\partial B_{R_2}$. Arguing as in the proof of \cite[Proposition 2]{DoTo}, $v$ is a multiple of $z_p$, proving (i). 

Let us prove (ii) now. Assume that $\tilde{\sigma}_{p,q_0}(A_{R_1,R_2})$ has radial minimizer; in view of (i) it is (a multiple of) $z_p$. Let now $v$ be a minimizer for $\tilde{\sigma}_{p,q}(A_{R_1,R_2})$ and let us prove that $v$ is radial. If $v$ is constant on $\partial B_{R_2}$, then $v$ is a
multiple of $z_p$ in view of the same argument as in the proof of \cite[Proposition 2]{DoTo} and thus is radial. Assume now, by contradiction, that $v$ is not constant on the boundary. The H\"older inequality gives the estimate
$$
\left(\int_{\partial B_{R_2}}v^q\:d\mathcal{H}^{d-1}\right)^{\frac{1}{q}}<P(B_{R_2})^{\frac{q_0-q}{q_0q}} \left(\int_{\partial B_{R_2}}v^{q_0}\:d\mathcal{H}^{d-1}\right)^{\frac{1}{q_0}}
$$
where the inequality is strict since $v$ is assumed not to be constant on $\partial B_{R_2}$. On the other hand, since $z_p$ is constant on $\partial B_{R_2}$
$$
\left(\int_{\partial B_{R_2}}z_p^q\:d\mathcal{H}^{d-1}\right)^{\frac{1}{q}}=P(B_{R_2})^{\frac{q_0-q}{q_0q}} \left(\int_{\partial B_{R_2}}z_p^{q_0}\:d\mathcal{H}^{d-1}\right)^{\frac{1}{q_0}}
$$
Now, $z_p$ is a minimizer for $\tilde{\sigma}_{p,q_0}(A_{R_1,R_2})$ and just a competitor for $\tilde{\sigma}_{p,q}(A_{R_1,R_2})$; conversely, $v$ is a minimizer for $\tilde{\sigma}_{p,q}(A_{R_1,R_2})$ and just a competitor for $\tilde{\sigma}_{p,q_0}(A_{R_1,R_2})$. 
This entails the strict estimate
\allowdisplaybreaks
\begin{align*}
\tilde{\sigma}_{p,q_0}(A_{R_1,R_2})&\le\dfrac{\ds\left(\int _{A_{R_1,R_2}}|\nabla v|^p\;dx+\ds\int_{A_{R_1,R_2}}v^p\;dx\right)^{1/p}}{\left(\ds\int_{\partial B_{R_2}}v^{q_0} \;d\mathcal{H}^{d-1}\right)^{1/q_0}}< P(B_{R_2})^{\frac{q_0-q}{q_0q}} \dfrac{\ds\left(\int _{\Omega}|\nabla v|^p\;dx+\ds\int_{\Omega}v^p\;dx\right)^{1/p}}{\left(\ds\int_{\partial \Omega}v^{q} \;d\mathcal{H}^{d-1}\right)^{1/q}}\\
&=P(B_{R_2})^{\frac{q_0-q}{q_0q}}\tilde{\sigma}_{p,q}(A_{R_1,R_2})=\dfrac{\ds\left(\int_{\partial B_{R_2}}z_p^q\:d\mathcal{H}^{d-1}\right)^{\frac{1}{q}}}{\ds\left(\int_{\partial B_{R_2}}z_p^{q_0}\:d\mathcal{H}^{d-1}\right)^{\frac{1}{q_0}}}\tilde{\sigma}_{p,q}(A_{R_1,R_2})\\
&\le\dfrac{\ds\left(\int_{\partial B_{R_2}}z_p^q\:d\mathcal{H}^{d-1}\right)^{\frac{1}{q}}}{\ds\left(\int_{\partial B_{R_2}}z_p^{q_0}\:d\mathcal{H}^{d-1}\right)^{\frac{1}{q_0}}} \dfrac{\ds\left(\int _{A_{R_1,R_2}}|\nabla z_p|^p\;dx+\ds\int_{A_{R_1,R_2}}z_p^p\;dx\right)^{1/p}}{\left(\ds\int_{\partial B_{R_2}}z_p^{q} \;d\mathcal{H}^{d-1}\right)^{1/q}}\\
&\le\dfrac{\ds\left(\int _{A_{R_1,R_2}}|\nabla z_p|^p\;dx+\ds\int_{A_{R_1,R_2}}z_p^p\;dx\right)^{1/p}}{\left(\ds\int_{\partial B_{R_2}}z_p^{q_0} \;d\mathcal{H}^{d-1}\right)^{1/q_0}}=\tilde{\sigma}_{p,q_0}(A_{R_1,R_2}),
\end{align*}
giving a contradiction. Thus, if $1 \le q < q_0$, any minimizer $v$
for $\tilde{\sigma}_{p,q}(A_{R_1,R_2})$ must be constant on $\partial B_{R_2}$, and so radial and multiple of $z_p$, proving (ii).

Item (iii) is a direct consequence of (i) and (ii). 

Finally, the first part of item (iv) can be proved via standard tools, while the radiality issue is a direct consequence of the uniqueness of the solution.
\end{proof}

\begin{remark}[Immediate geometric upper bounds]\label{trivial} Testing \eqref{pq_min} and \eqref{pqbuco_min} with the characteristic functions of $\Omega$ and $\Omega_0\setminus\overline{\Theta}$ respectively we get
    $$\sigma_{p,q}(\Omega)\le\frac{|\Omega|^{1/p}}{P(\Omega)^{1/q}},\quad \tilde{\sigma}_{p,q}(\Omega_0\setminus\overline{\Theta})\le\frac{|\Omega_0\setminus\overline{\Theta}|^{1/p}}{P(\Omega_0)^{1/q}}.$$
\end{remark}

\begin{remark}[relation between $\sigma_{p,q}$ and $\tilde{\sigma}_{p,q}$]
Let us consider $\Omega_0$ a convex domain and $\Theta\subset\subset\Omega_0$ an admissible hole. Let $u_{\Omega_0}\in W^{1,p}(\Omega_0)$ be a minimizer for \eqref{pq_min} on $\Omega_0$. Using $u_{\Omega_0}|_{\Omega_0\setminus\overline{\Theta}}$ as a test function for $\tilde{\sigma}_{p,q}(\Omega_0\setminus\overline{\Theta})$ one has
\begin{align*}\tilde{\sigma}_{p,q}(\Omega_0\setminus\overline{\Theta})&\le\dfrac{\ds\left(\int _{\Omega_0\setminus\overline{\Theta}}|\nabla u_{\Omega_0}|^p\;dx+\ds\int_{\Omega_0\setminus\overline{\Theta}}|u_{\Omega_0}|^p\;dx\right)^{1/p}}{\left(\ds\int_{\partial\Omega_0\cup\partial\Theta}|u_{\Omega_0}|^q \;d\mathcal{H}^{d-1}\right)^{1/q}}\\
&\le\dfrac{\ds\left(\int _{\Omega_0}|\nabla u_{\Omega_0}|^p\;dx+\ds\int_{\Omega_0}|u_{\Omega_0}|^p\;dx\right)^{1/p}}{\left(\ds\int_{\partial\Omega_0}|u_{\Omega_0}|^q \;d\mathcal{H}^{d-1}\right)^{1/q}}=\sigma_{p,q}(\Omega_0),
\end{align*}
i.e. the trace constant of the outer set $\Omega_0$ is always larger than the exterior trace constant of any of the holed sets of the form $\Omega_0\setminus\overline{\Theta}$. In particular, if $P(\Omega_0)=P(B_R)$, $p>1$ and $1\le q\le p$, in view of the maximality of $B_R$ for $\sigma_{p,q}$ (Main Theorem 1), it holds 
$$\tilde{\sigma}_{p,q}(\Omega_0\setminus\overline{\Theta})\le\sigma_{p,q}(B_R).$$ 
This fact heuristically suggests another reason why, for a nontrivial maximization of $\tilde{\sigma}_{p,q}$, we also need the volume constraint (which even prevents that $\Theta=\emptyset$). Indeed, the problem without the measure constraint
$$\max\{\tilde{\sigma}_{p,q}(\Omega_0\setminus\overline{\Theta}):\Omega_0\ \text{convex},\ P(\Omega_0)=P(B_R),\ \Theta\subset\subset\Omega_0\}$$
attains its maximum on ${\sigma}_{p,q}(B_R)$ in view of the previous observations.
\end{remark}

\begin{remark}[the case $p=q=1$]
The results presented in the paper hold for $p\in]1,+\infty]$ and $q\in[1,p]$. The case $p=q=1$ is different and immediate. The constant $\sigma_1(\Omega)$ has been studied in \cite{AMaRo}. There the authors prove that
$$\sigma_1(\Omega)\le\min\left\{\frac{|\Omega|}{P(\Omega)},1\right\},$$
with the equality holding for some particular sets. Among them, the most relevant example for our purposes is the ball, where equality holds and one has
$$\sigma_1(B_R)=\min\left\{\frac{|B_R|}{P(B_R)},1\right\}=\min\left\{\frac{\omega_dR^d}{d\omega_dR^{d-1}},1\right\}=\min\left\{\frac{R}{d},1\right\}$$
Thus, problem
$$\max\left\{\sigma_1(\Omega):\Omega\subset\R^d\ \text{convex},\ P(\Omega)=d\omega_dR^{d-1}\right\}$$
admits $B_R$ as a trivial solution. Indeed, for any $\Omega\subset\R^d$ convex with $P(\Omega)=d\omega_dR^{d-1}$, it holds $\Omega^\star=B_R$; so $P(\Omega^\star)=P(\Omega)$, $|\Omega|\le|\Omega^\star|$ and one has
$$\sigma_1(\Omega)\le\min\left\{\frac{|\Omega|}{P(\Omega)},1\right\}\le\min\left\{\frac{|\Omega^\star|}{P(\Omega^\star)},1\right\}=\sigma_1(\Omega^\star).$$
If $\sigma_1(\Omega^\star)=\frac{|\Omega^\star|}{P(\Omega^\star)}$, i.e. if the radius $R(\Omega^\star)$ of $\Omega^\star$ is less than the dimension $d$, then the ball $\Omega^\star$ is the only maximizer, since, if $\Omega\neq\Omega^\star$ holds, the second inequality above is strict. Otherwise, if $\sigma_1(\Omega^\star)=1$, the unicity of the maximizer is not assured, since the existence of another possible convex set $\Omega\neq\Omega^\star$ with $\sigma_1(\Omega)=1$ does not seem excluded.

By adapting the same argument as in \cite{AMaRo}, we get the same conclusion for the maximality of $A_{R_1,R_2}$ for $\tilde{\sigma}_1$.
\end{remark}

\subsection{Nearly sperical and nearly annular sets; Hausdorff asymmetry indices}
In this section we introduce some notions related with the closeness to the radial shapes. We start recalling the nearly spherical sets (see \cite{fuglede1989stability,fusco2015quantitative}).
\begin{definition}[nearly spherical sets]
\label{NSset}
Let $\Omega_0\subset \mathbb{R}^d$ be an open bounded set with $0\in \Omega_0$. The set  $\Omega_0$ is said a \emph{$R-$nearly spherical set parametrized by $u$} if there exist a constant  $R>0$ and $u\in W^{1,\infty}(\mathbb{S}^{d-1})$, with $||u||_{W^{1,\infty}}\le R/2$, such that
\begin{equation*}
 \partial    \Omega_0= \left\{y \in \R^d \colon y=\xi  (R+u(\xi)), \ \xi \in \mathbb{S}^{d-1}\right\}.
\end{equation*}
\end{definition}
In \cite{cpp}, authors introduced the counterpart of the nearly spherical shells in the frameworks of domains with holes.
\begin{definition}[nearly annular sets]
Let $\Omega_0\setminus\overline{\Theta}\subset\R^d$ an open set where $\Omega_0$ and $\Theta$ are open, $\Theta\subset\subset\Omega_0$ and $0\in \Theta$. We say that $\Omega_0\setminus\overline{\Theta}$ is a \emph{$(R_1,R_2)-$nearly annular set}  if there exist $0<R_1<R_2<+\infty$ and $u,v\in W^{1,\infty}(\mathbb{S}^{d-1})$, with $||u||_{W^{1,\infty}}<R_2/2$, $||v||_{W^{1,\infty}}\le R_1/2$, such that 
\begin{equation*}
\partial\Omega_0= \left\{y \in \R^d \colon y=\xi  (R_2+u(\xi)), \, \xi \in \mathbb{S}^{d-1}\right\}
\end{equation*}
and
\begin{equation*}
\partial\Theta= \left\{y \in \R^d \colon y=\xi  (R_1+v(\xi)), \, \xi \in \mathbb{S}^{d-1}\right\}.
\end{equation*}
\end{definition}
The key points of the quantitative enhancements proved in this paper are the estimates of the deviations of an admissible convex set from the ball having the same perimeter and of a convex domain with smooth open hole from the spherical shell having the same volume and the same outer perimeter. So, we recall the following Hausdorff asymmetry indices, very useful in the framework of convex sets:
\begin{equation}\label{eq:ahstar}
 \mathcal{A}_{\mathcal{H}}^\star(E)= \min_{x \in \R^{d}} \left\{d_{\mathcal{H}}(E, B_r(x)) ,\, P(E)= P(B_r(x)) \right\},
\end{equation}
\begin{equation*}
    \mathcal{A}_{\mathcal{H}}^\sharp(E)=\min_{x \in \R^{d}} \left\{d_{\mathcal{H}}(E, B_r(x)) ,\, \abs{E}= \abs{B_r(x)} \right\}.
\end{equation*}
They give the deviations of $E$ from the balls having the same perimeter or the same volume as $E$ respectively.

$\mathcal{A}_{\mathcal{H}}^\star$ and $\mathcal{A}_{\mathcal{H}}^\sharp$ are linked by the following estimate. 
\begin{lemma}[\cite{gavitone2020quantitative}, Lemma 2.9]
    \label{gloria}
    Let $d \geq 2$, $\delta>0$ and let $E \subset \R^d$ be a bounded, convex, with $P(E)-P(E^\sharp) \leq \delta$  then 
    \begin{equation}
        \mathcal{A_H}^{\star}(E) \leq C(d)\mathcal{A_H}^{\sharp}(E).
    \end{equation}
\end{lemma}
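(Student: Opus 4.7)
The plan is to combine the triangle inequality for the Hausdorff distance with the monotonicity of the perimeter for convex sets under inclusion. The optimal translation witnessing $\mathcal{A}_{\mathcal{H}}^\sharp(E)$ will then be reused for $\mathcal{A}_{\mathcal{H}}^\star(E)$, after enlarging the reference ball to one with the correct perimeter.

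First I would write $r_\sharp$ and $r_\star$ for the radii of $E^\sharp$ and $E^\star$, so that $|E|=\omega_d r_\sharp^d$ and $P(E)=d\omega_d r_\star^{d-1}$; the classical isoperimetric inequality gives $r_\sharp \leq r_\star$. Next I would pick $x^\sharp \in \R^d$ attaining the minimum in the definition of $\mathcal{A}_{\mathcal{H}}^\sharp(E)$, so that $d_{\mathcal{H}}\bigl(E, B_{r_\sharp}(x^\sharp)\bigr)=\mathcal{A}_{\mathcal{H}}^\sharp(E)$. From the very definition of Hausdorff distance this gives the one-sided inclusion $E \subseteq B_{r_\sharp + \mathcal{A}_{\mathcal{H}}^\sharp(E)}(x^\sharp)$.

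The key geometric step will be to exploit convexity: since $E$ is convex and contained in the convex ball $B_{r_\sharp + \mathcal{A}_{\mathcal{H}}^\sharp(E)}(x^\sharp)$, monotonicity of the perimeter for convex bodies under inclusion yields
\[
P(E) \leq P\bigl(B_{r_\sharp + \mathcal{A}_{\mathcal{H}}^\sharp(E)}\bigr) = d\omega_d\bigl(r_\sharp + \mathcal{A}_{\mathcal{H}}^\sharp(E)\bigr)^{d-1},
\]
and comparing with $P(E)=d\omega_d r_\star^{d-1}$ gives the radius estimate $r_\star - r_\sharp \leq \mathcal{A}_{\mathcal{H}}^\sharp(E)$. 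Since $B_{r_\star}(x^\sharp)$ has the same perimeter as $E$, it is admissible for the minimization defining $\mathcal{A}_{\mathcal{H}}^\star(E)$, and the triangle inequality in Hausdorff distance (recalling that $d_{\mathcal{H}}(B_{r_\sharp}(x^\sharp),B_{r_\star}(x^\sharp))=r_\star-r_\sharp$) finally gives
\[
\mathcal{A}_{\mathcal{H}}^\star(E) \leq d_{\mathcal{H}}\bigl(E, B_{r_\star}(x^\sharp)\bigr) \leq \mathcal{A}_{\mathcal{H}}^\sharp(E) + (r_\star-r_\sharp) \leq 2\,\mathcal{A}_{\mathcal{H}}^\sharp(E),
\]
so the lemma holds with the explicit constant $C(d)=2$.

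The only delicate point is the perimeter-monotonicity step, which is precisely where convexity is essential: without convexity a small Hausdorff deformation of a ball may carry arbitrarily large perimeter, and the estimate $r_\star - r_\sharp \le \mathcal{A}_{\mathcal{H}}^\sharp(E)$ would have to be replaced by a Bonnesen-type quantitative isoperimetric inequality. I note in passing that the smallness assumption $P(E)-P(E^\sharp)\le \delta$ is never actually invoked in this argument; it is presumably present in the statement of \cite{gavitone2020quantitative} for coherence with the context in which the lemma is applied.
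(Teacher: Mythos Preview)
Your argument is correct and self-contained; the paper does not supply its own proof of this lemma but simply cites it from \cite{gavitone2020quantitative}, so there is no in-text proof to compare against. Your route via perimeter monotonicity for nested convex bodies and the triangle inequality is clean, yields the explicit constant $C(d)=2$, and your remark that the smallness hypothesis $P(E)-P(E^\sharp)\le\delta$ is not actually needed is accurate.
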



With these definitions, we can recall the quantitative isoperimetric inequality proved in \cite{fuglede1989stability,fusco2015quantitative}. 

\begin{theorem}[Fuglede]
\label{fuglede}
    Let $d \geq 2$, and let $E$ be a bounded open and convex set with $ \abs{E}= \omega_d$. There exists $\delta, C$, depending only on $n$, such that if $P(E)-P(E^\sharp)\leq \delta$ then
    \begin{equation}
    \label{eq_Fuglede}
        P(E)-P(E^\sharp) \geq C g \bigl( \mathcal{A}_{\mathcal{H}}^{\sharp}(E) \bigr),
    \end{equation}
    where $g$ is defined in \eqref{function_g} by
    \begin{equation*}
        g(s)=
        \begin{cases}
            s^2 & \text{ if } d=2\\
            f^{-1}(s^2) & \text{ if } d=3\\
            s^{\frac{d+1}{2}} & \text{ if } d\geq 4
        \end{cases}
    \end{equation*}
    with $f(t)= \sqrt{t \log(\frac{1}{t})}$ for $0 < t < e^{-1}$.
\end{theorem}

We will make use of a modified version of the previous theorem, in terms of isovolumetric deficit $\abs{\Omega^{\star}} - \abs{\Omega}$. 

\begin{lemma}[\cite{amato2024estimates}, Lemma 2.4]
    \label{lemma_fugl_mod}
     Let $\Omega \subset \R^d$ be a bounded, open and convex set and let $\Omega^{\star}$ be the ball satisfying $P(\Omega)=P(\Omega^\star)=L$. Then, there exist $\delta,C$, depending only on $d$ and $L$, such that, if
    \begin{equation}
        \label{Hp_Lemma}
        \abs{\Omega^{\star}} - \abs{\Omega} \leq \delta
    \end{equation}
    then
    \[
     \abs{\Omega^{\star}} - \abs{\Omega}  \geq C g \bigl( \mathcal{A}_{\mathcal{H}}^{\star}(\Omega) \bigr)
    \]
    where $g$ is the function defined in \eqref{function_g}.
\end{lemma}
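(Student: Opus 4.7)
The plan is to reduce the statement to Fuglede's theorem by relating the isovolumetric deficit $|\Omega^\star|-|\Omega|$ to the isoperimetric deficit $P(\Omega)-P(\Omega^\sharp)$, and then converting from the asymmetry $\mathcal{A}_{\mathcal{H}}^\sharp$ to $\mathcal{A}_{\mathcal{H}}^\star$ via Lemma \ref{gloria}. First I would introduce the radii $r^\star$ and $r^\sharp$ of $\Omega^\star$ and $\Omega^\sharp$, so that the perimeter constraint reads $d\omega_d (r^\star)^{d-1}=L$ and the isoperimetric inequality yields $r^\sharp\le r^\star$. Then both deficits become explicit functions of $r^\sharp$:
\[
P(\Omega)-P(\Omega^\sharp)=d\omega_d\left[(r^\star)^{d-1}-(r^\sharp)^{d-1}\right],\qquad |\Omega^\star|-|\Omega|=\omega_d\left[(r^\star)^d-(r^\sharp)^d\right].
\]

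Next I would compare the two deficits. Using the factorization $a^n-b^n=(a-b)\sum_{k=0}^{n-1}a^{n-1-k}b^k$, their ratio is
\[
\frac{P(\Omega)-P(\Omega^\sharp)}{|\Omega^\star|-|\Omega|}=\frac{d\sum_{k=0}^{d-2}(r^\star)^{d-2-k}(r^\sharp)^k}{\sum_{k=0}^{d-1}(r^\star)^{d-1-k}(r^\sharp)^k},
\]
which tends to $(d-1)/r^\star$ as $r^\sharp\to r^\star$. Since the hypothesis $|\Omega^\star|-|\Omega|\le\delta$ forces $r^\sharp$ to lie in a neighborhood of $r^\star$ (with size controlled by $\delta$ and $L$), one obtains a one-sided comparison of the form
\[
P(\Omega)-P(\Omega^\sharp)\le c_1(d,L)\,\bigl(|\Omega^\star|-|\Omega|\bigr),
\]
for $\delta$ chosen small enough, and in particular $P(\Omega)-P(\Omega^\sharp)$ is also small.

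At this point I would invoke Fuglede's theorem \ref{fuglede}, after a trivial scaling $\tilde\Omega=\lambda\Omega$ with $\lambda=(\omega_d/|\Omega|)^{1/d}$ to normalize the volume to $\omega_d$; since $|\Omega|$ lies in a bounded range determined by $L$ and $\delta$, the scaling constants depend only on $d$ and $L$. This gives
\[
P(\Omega)-P(\Omega^\sharp)\ge c_2(d,L)\,g\bigl(\mathcal{A}_{\mathcal{H}}^\sharp(\Omega)\bigr).
\]
Combining with the previous step, and noting the smallness of the isoperimetric deficit enables us to apply Lemma \ref{gloria}, we have $\mathcal{A}_{\mathcal{H}}^\sharp(\Omega)\ge \mathcal{A}_{\mathcal{H}}^\star(\Omega)/C(d)$, whence by monotonicity and the quasi-homogeneity of $g$ on each of the three branches in \eqref{function_g},
\[
g\bigl(\mathcal{A}_{\mathcal{H}}^\sharp(\Omega)\bigr)\ge c_3(d)\,g\bigl(\mathcal{A}_{\mathcal{H}}^\star(\Omega)\bigr).
\]
Chaining the three inequalities yields the claim with a constant $C(d,L)$.

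The routine parts are the algebraic deficit comparison and the rescaling. The main technical obstacle is the last chaining step in the borderline dimension $d=3$, where $g(s)=f^{-1}(s^2)$ is not a pure power, so the estimate $g(s/C)\ge c\,g(s)$ requires a short argument using the behavior of $f(t)=\sqrt{t\log(1/t)}$ near $0$. A secondary subtlety is the careful choice of $\delta$ so that both the neighborhood estimate around $r^\star$ and the smallness hypothesis of Fuglede's theorem and of Lemma \ref{gloria} are simultaneously satisfied; this is handled by picking $\delta=\delta(d,L)$ smaller than the minimum of the thresholds coming from the three ingredients.
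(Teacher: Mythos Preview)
The paper does not supply its own proof of this lemma: it is quoted verbatim from \cite{amato2024estimates} and used as a black box, so there is no in-paper argument to compare against. Your outline is the natural reduction and is essentially correct: express both deficits through the radii $r^\sharp\le r^\star$, show $P(\Omega)-P(\Omega^\sharp)\le c_1(d,L)(|\Omega^\star|-|\Omega|)$ for $r^\sharp$ close to $r^\star$, rescale to unit volume and apply Theorem~\ref{fuglede}, then pass from $\mathcal{A}_{\mathcal{H}}^\sharp$ to $\mathcal{A}_{\mathcal{H}}^\star$ via Lemma~\ref{gloria} and the monotonicity/quasi-homogeneity of $g$.

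Two minor points worth tightening. First, in the $d=3$ step you should make explicit that the asymmetry is a priori bounded (e.g.\ by the diameter bound coming from $P(\Omega)=L$ and convexity), so that the argument $s$ of $g$ stays in a fixed compact subinterval of $(0,e^{-1/2})$ where the comparison $g(s/C)\ge c_3\,g(s)$ can be made uniform; the near-origin asymptotics of $f(t)=\sqrt{t\log(1/t)}$ alone are not quite enough without this. Second, when you rescale to apply Theorem~\ref{fuglede} you should track how the Hausdorff asymmetry scales ($\mathcal{A}_{\mathcal{H}}^\sharp(\lambda\Omega)=\lambda\,\mathcal{A}_{\mathcal{H}}^\sharp(\Omega)$) and absorb the resulting factor into the constant, again using that $\lambda$ ranges in a compact interval determined by $d,L,\delta$. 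Both are routine once stated.
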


In the following section, we introduce another asymmetry index that is very helpful when dealing with convex sets.

\subsection{Some useful tools involving quermassintegrals}
Let us recall some basic facts about convex sets. Let $K \subset \R^d$ be a non-empty, bounded, convex set, let $B$ be the unitary ball centered at the origin and $\rho > 0$. We can write the Steiner formula for the Minkowski sum $K+ \rho B$ as
\begin{equation*}
    \abs{K + \rho B} = \sum_{i=0}^d \binom{d}{i} W_i(K) \rho^{i} .
\end{equation*}
We usually refer to the coefficients $W_i(K)$ as \emph{quermassintegrals} of $K$. Some remarkable cases are
$$W_0(K) = \abs{K},\quad W_1(K) =\frac{P(K)}{d},  \quad W_d(K) = \omega_d.$$

Quermassintegrals play a fundamental role when considering shape optimization problems on convex sets, as they provide powerful tools to handle convex super or sublevel sets. To this aim, we consider the Aleksandrov-Fenchel inequalities 
\begin{equation}
    \label{Aleksandrov_Fenchel_inequalities}
    \biggl( \frac{W_j(K)}{\omega_d} \biggr)^{\frac{1}{d-j}} \geq \biggl( \frac{W_i(K)}{\omega_d} \biggr)^{\frac{1}{d-i}} \qquad 0 \leq i < j \leq d-1,
\end{equation}
where equality holds if and only if $K$ is a ball. When $i=0$ and $j=1$, formula \eqref{Aleksandrov_Fenchel_inequalities} reduces to the classical isoperimetric inequality, i.e.
\[
P(K) \geq d \omega_d^{\frac{1}{d}} \abs{K}^{\frac{d-1}{d}}.
\]
An important role is played by 
\eqref{Aleksandrov_Fenchel_inequalities} with $i=1$ and $j=2$, that is
\begin{equation}
    \label{Aleksandrov_Fenchel_W_2}
    W_2(K) \geq d^{-\frac{d-2}{d-1}} \omega_d^{\frac{1}{d-1}} P(K)^{\frac{d-2}{d-1}}.
\end{equation}

Several improvements of inequalities involving the quermassintegrals are available in literature, see e.g. \cite{lasan} for recent results involving also the boundary momenta. In \cite{grosch} a quantitative version of the Alexandrov-Fenchel inequalities is proved, and we will report it  for  $j=2$ and $i=1$. In order to do that we have to define some objects. 

Let $\Omega\subset\R^d$ be an open convex set. We define \emph{support} function of $\Omega$ the map
\begin{equation*}
        h(\Omega, u):=\max_{x\in \Omega} (x\cdot u), \qquad u\in \mathbb{S}^{d-1}
    \end{equation*}
and \emph{width function} of $\Omega$
the map
$$w(\Omega, u):= h(\Omega, u) + h(\Omega, -u) \quad \text{for } u\in \mathbb{S}^{d-1}.$$
More precisely, the width $w(\Omega, u)$ is the distance between
the two support hyperplanes of $\Omega$ orthogonal to $u$. Roughly speaking, $w(\Omega, u)$ gives the thickness of $\Omega$ in the direction $u$. We point out that the diameter of $\Omega$ can be given in terms of $w(\Omega, \cdot)$ as follows:
$$D(\Omega)=\max_{u\in \mathbb{S}^{d-1}} w(\Omega, u).$$
The mean value of the width function is called the \emph{mean width} and it is denoted by 
$$w(\Omega):= \frac{2}{d\omega_d} \int_{\mathbb{S}^{d-1}}h(\Omega, u)\, du.$$
Finally, the Steiner point of $\Omega$ $s(\Omega)\in\Omega$ is defined as
    \begin{equation*}
        s(\Omega)=\frac{1}{\omega_d}\int_{\mathbb{S}^{d-1}}h(\Omega,u) u du.
    \end{equation*}

\begin{definition}
    Let $\Omega\subset\R^d$ be a bounded open convex set. The Steiner ball $B_{\Omega}$  of the convex set $\Omega$ is the ball centered at the Steiner point $s(\Omega)$ with diameter equal to $w(\Omega)$
\end{definition}
In Section \ref{main_sec_buco} we deal with domains with holes of the form $\Omega_0\setminus\overline{\Theta}$. In order to quantify the deviation of the convex set $\Omega_0$ from being a ball we make use of a quantitative version of \eqref{Aleksandrov_Fenchel_inequalities} in terms of the Hausdorff distance between $\Omega_0$ and its Steiner ball $B_{\Omega_0}$ (see \cite{grosch} for the proof).
\begin{theorem}[Quantitative Alexandrov-Fenchel inequalities] 
\label{quantitative_AF_propostion}
Let $\Omega_0$ be a convex body and $B_{\Omega_0}$ the Steiner ball of $\Omega_0$. Then,
\begin{equation}\label{stability_a_f}
\dfrac{W_2(\Omega_0)^{d-1}-\omega_d W_1(\Omega_0)^{d-2}}{W_1(\Omega_0)^{d-2}}\geq c(d,\Omega_0) d_{\mathcal{H}}(\Omega_0, B_{\Omega_0})^{\frac{d+3}{2}}.
\end{equation}  
\end{theorem}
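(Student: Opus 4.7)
The plan is to prove the stability statement by passing from the convex body $\Omega_0$ to its support function on $\S^{d-1}$ and decomposing into spherical harmonics, which is the standard strategy for quantitative inequalities of Alexandrov--Fenchel type (as in Groemer, Schneider, and the reference \cite{grosch}). Translate so that the Steiner point coincides with the origin; then $B_{\Omega_0}$ becomes the ball of radius $r:=w(\Omega_0)/2$ centered at $0$, and the support function of that ball on $\S^{d-1}$ is the constant $r$. The Hausdorff distance becomes
\[
d_{\mathcal{H}}(\Omega_0,B_{\Omega_0})=\|h(\Omega_0,\cdot)-r\|_{L^\infty(\S^{d-1})},
\]
so the goal is to bound this $L^\infty$-norm by the deficit on the left-hand side.

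First I would expand $\varphi(u):=h(\Omega_0,u)-r$ in spherical harmonics: $\varphi=\sum_{n\ge 0}\varphi_n$. The normalization of the Steiner point forces the degree-one component to vanish, $\varphi_1\equiv 0$, since $s(\Omega_0)=0$ corresponds exactly to the first spherical-harmonic coefficient of $h$ being zero after the translation. Similarly $\varphi_0=0$ because $r$ is chosen as the mean of $h$ on $\S^{d-1}$ (the mean width). Next I would express $W_1$ and $W_2$ as the quadratic forms
\[
d\,W_1(\Omega_0)=\int_{\S^{d-1}} h\,du,\qquad d(d-1)\,W_2(\Omega_0)=\int_{\S^{d-1}} h(\Delta_{\S^{d-1}} h+(d-1)h)\,du,
\]
or rather the appropriate smooth-body version of this, with the singular case recovered by approximation. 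Feeding the harmonic expansion into these expressions and using that the eigenvalues of $-\Delta_{\S^{d-1}}$ on degree-$n$ harmonics are $n(n+d-2)$, one obtains, after a short computation keeping only the leading order in $\|\varphi\|_{L^2}$,
\[
W_2(\Omega_0)^{d-1}-\omega_d W_1(\Omega_0)^{d-2}\ \gtrsim\ \sum_{n\ge 2}(n-1)(n+d-1)\,\|\varphi_n\|_{L^2(\S^{d-1})}^2\ \gtrsim\ \|\varphi\|_{L^2(\S^{d-1})}^2,
\]
uniformly in $\Omega_0$ once a lower bound on $W_1$ is in force (this is where the constant $c(d,\Omega_0)$ enters).

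Having the $L^2$-control, the final step is interpolation between $L^2$ and Lipschitz. The width and diameter of $\Omega_0$ give a uniform Lipschitz bound for $\varphi$ on $\S^{d-1}$; Gagliardo--Nirenberg/Sobolev interpolation on the sphere yields
\[
\|\varphi\|_{L^\infty}^{\,\frac{d+3}{2}}\ \lesssim\ \|\varphi\|_{L^2}^2\cdot (\text{Lip bound})^{\frac{d-1}{2}},
\]
after balancing the embedding $W^{1,\infty}\hookrightarrow L^\infty$ against $L^2$. Combining this with the previous paragraph produces exactly the exponent $\frac{d+3}{2}$ in the statement and completes the argument, with a constant depending on $d$ and on geometric data of $\Omega_0$ (diameter and mean width) through the Lipschitz factor.

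The main obstacle I expect is making the coercive lower bound
\[
W_2(\Omega_0)^{d-1}-\omega_d W_1(\Omega_0)^{d-2}\gtrsim \|\varphi\|_{L^2}^2
\]
rigorous for a general convex body rather than a smooth one, since the representation of $W_2$ via $\Delta_{\S^{d-1}}$ requires at least $C^{1,1}$ regularity of $\partial\Omega_0$. The standard fix is an approximation argument: regularize $\Omega_0$ by $\Omega_0+\varepsilon B_1$, observe that all quantities appearing in the inequality are continuous with respect to the Hausdorff topology on convex bodies, and pass to the limit $\varepsilon\to 0^+$. Tracking the dependence of constants through this approximation, and verifying that the gap from the degree-one mode is correctly absorbed by the Steiner centering, is the delicate technical point; everything else is essentially harmonic analysis on $\S^{d-1}$.
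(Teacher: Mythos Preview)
The paper does not prove this theorem: it is stated with the sentence ``see \cite{grosch} for the proof'' and is used as a black box. So there is no in-paper argument to compare against; the relevant comparison is with the method in the cited reference, and your outline---spherical-harmonic expansion of the support function after Steiner centering, coercivity of the Alexandrov--Fenchel deficit over harmonics of degree $\ge 2$, followed by $L^2$--$L^\infty$ interpolation using the Lipschitz bound on $h$---is exactly the Groemer--Schneider strategy, and the exponent $\tfrac{d+3}{2}$ arises precisely from that interpolation. In that sense the plan is correct.

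Two technical points deserve attention, though. First, your integral formulas for $W_1$ and $W_2$ are misidentified: $\int_{\S^{d-1}}h$ gives (a multiple of) $W_{d-1}$, not $W_1$, and the quadratic form $\int h(\Delta_{\S^{d-1}}h+(d-1)h)$ is tied to $W_{d-2}$; the indexing of quermassintegrals via support-function integrals runs the other way. This does not change the architecture of the proof, but the actual computation linking $W_2^{d-1}-\omega_d W_1^{d-2}$ to $\|\varphi\|_{L^2}^2$ has to go through the correct Kubota/mixed-volume representations, not the ones you wrote. Second, the deficit here is a \emph{nonlinear} combination of $W_1$ and $W_2$ (degree $d-1$ in $W_2$), so ``keeping only the leading order in $\|\varphi\|_{L^2}$'' hides a genuine linearization step: one needs either a smallness assumption on $\varphi$ (absorbed into the constant $c(d,\Omega_0)$ via the diameter/inradius) or to reduce first to the quadratic AF deficit $W_{j}^2-W_{j-1}W_{j+1}$ where the harmonic-analysis argument is clean, and then pass to the form stated. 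These are exactly the refinements carried out in \cite{grosch}; your sketch captures the idea but not yet the bookkeeping.
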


From \eqref{stability_a_f}, it follows that 
\begin{equation}\label{eq:afq}
    W_2(\Omega_0)-\omega_d^{\frac{1}{d-1}}d^{-\frac{d-2}{d-1}}P(\Omega_0)^{\frac{d-2}{d-1}} \geq c d_{\mathcal{H}}(\Omega_0, B_{\Omega_0})^{\frac{d+3}{2(d-1)}}
\end{equation}

We will exploit the latter improved Aleksandrov-Fenchel inequality to estimate from below $\tilde{\sigma}_{p,q}(A_{R_1,R_2})-\tilde{\sigma}_{p,q}(\Omega_0\setminus\overline{\Theta})$ in terms of the asymmetry of convex set $\Omega_0$. Our technique is based on the application of \eqref{eq:afq} to the sublevel sets $\Omega_{0,t}$ for the test function $w_{\Omega_0}$ (see \eqref{GFerone}) which only depends on the distance from the boundary of $\Omega_0$. Anyway, our goal is to obtain the lower estimate not in terms of the asymmetry of $\Omega_{0,t}$, but in terms of the asymmetry of $\Omega_0$ itself. 
 For that reason, we need to control uniformly from below the asymmetry of the level subsets with the asymmetry of $\Omega_0$.  This technique is usually called \emph{propagation of the asymmetry}; it has been carried out in many situations (see also \cite{hansen1994isoperimetric} and \cite{bdph} for the results inspiring Lemma \ref{propagation_lemma}, involving the Fraenkel asymmetry), even in noneuclidean setting (see, for instance, \cite{cclp} or \cite{cclp2} for two applications in the Gauss space).

In our framework, we make use of the following result proved in \cite{massal}.
\begin{lemma}[Propagation of the Hausdorff asymmetry]
\label{propagation_lemma}
    Let $\Omega_0\subseteq \mathbb{R}^d$ be a bounded and convex set with positive measure and let $U\subset\Omega_0$ be with $|U|>0$ and such that 
    \begin{equation}\label{prop_hyp}
       d_{\mathcal{H}}(\Omega_0, U)\leq \dfrac{1}{2(d+2)}   d_{\mathcal{H}}(\Omega_0, B_{\Omega_0}), 
    \end{equation}
    where $B_{\Omega_0}$ is the Steiner ball of $\Omega_0$. Then, we have
    \begin{equation}\label{prop_thesis}
        d_{\mathcal{H}}(U, B_U)\geq \dfrac{1}{2}   d_{\mathcal{H}}(\Omega_0, B_{\Omega_0}).
    \end{equation}
\end{lemma}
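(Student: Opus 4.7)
The plan is to prove the estimate via the triangle inequality for the Hausdorff distance applied to the triple $\Omega_0$, $U$, $B_U$, $B_{\Omega_0}$, combined with a quantitative control of how the Steiner ball depends on its set in the Hausdorff topology. Concretely, I would write
\[
d_{\mathcal{H}}(\Omega_0, B_{\Omega_0}) \;\leq\; d_{\mathcal{H}}(\Omega_0, U) + d_{\mathcal{H}}(U, B_U) + d_{\mathcal{H}}(B_U, B_{\Omega_0}),
\]
and then bound the first and third terms in terms of $d_{\mathcal{H}}(\Omega_0, U)$ alone, so that the desired lower bound on $d_{\mathcal{H}}(U, B_U)$ follows directly by rearrangement and by invoking the hypothesis \eqref{prop_hyp}.

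The core step is therefore to show that the map $E \mapsto B_E$ is Lipschitz in the Hausdorff metric with constant at most $d+1$; combined with the triangle inequality this will give the factor $d+2$ that appears in the hypothesis. To this end, I would recall that $B_E$ is the ball centered at the Steiner point $s(E)$ with radius $w(E)/2$, and both objects are defined as integrals against the support function $h(E,\cdot)$ on $\mathbb{S}^{d-1}$. Setting $\varepsilon := d_{\mathcal{H}}(\Omega_0, U)$, I would use the well-known identity $d_{\mathcal{H}}(\Omega_0, U) = \sup_{u \in \mathbb{S}^{d-1}} |h(\Omega_0,u) - h(U,u)|$ (valid on convex bodies, and extending to $U$ through its convex hull, which has the same support function) to derive
\[
|s(\Omega_0) - s(U)| \;\leq\; \tfrac{1}{\omega_d}\int_{\mathbb{S}^{d-1}} |h(\Omega_0,u)-h(U,u)|\,du \;\leq\; d\,\varepsilon
\]
together with the corresponding $|w(\Omega_0) - w(U)| \leq 2\varepsilon$, hence $d_{\mathcal{H}}(B_{\Omega_0}, B_U) \leq d\varepsilon + \varepsilon = (d+1)\varepsilon$, since the Hausdorff distance between two balls is controlled by the distance of centers plus the difference of radii.

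Inserting these estimates into the triangle inequality yields
\[
d_{\mathcal{H}}(\Omega_0, B_{\Omega_0}) \;\leq\; (d+2)\, d_{\mathcal{H}}(\Omega_0, U) + d_{\mathcal{H}}(U, B_U),
\]
and the assumption $d_{\mathcal{H}}(\Omega_0, U) \leq \frac{1}{2(d+2)} d_{\mathcal{H}}(\Omega_0, B_{\Omega_0})$ immediately gives the conclusion \eqref{prop_thesis}. The only delicate point, and the step I expect to require the most care, is the non-convex case: $U$ is not assumed convex, so I must verify that the Steiner ball of $U$ is well-defined and coincides with that of $\mathrm{conv}(U)$, and that the Hausdorff distance estimate $d_{\mathcal{H}}(\Omega_0,U) = \sup_u |h(\Omega_0,u) - h(U,u)|$ still applies in the relevant form (using $h(U,\cdot) = h(\mathrm{conv}(U),\cdot)$ and the monotonicity $\mathrm{conv}(U) \subset \Omega_0$). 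Once this functional-analytic setup is cleared, the remainder is a routine chain of inequalities, and the constants align precisely with the threshold $2(d+2)$ stated in the hypothesis.
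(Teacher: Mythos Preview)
The paper does not actually prove this lemma; it is quoted from the external reference \cite{massal}. Your argument is correct and is the natural one: the triangle inequality for $d_{\mathcal H}$ together with the Lipschitz dependence of the Steiner ball on its set yields exactly the constant $2(d+2)$ in the hypothesis. The computations $|s(\Omega_0)-s(U)|\le d\,\varepsilon$ and $|w(\Omega_0)-w(U)|/2\le\varepsilon$ from the integral formulas for $s(\cdot)$ and $w(\cdot)$ are accurate, and the bound $d_{\mathcal H}(B_{\Omega_0},B_U)\le(d+1)\varepsilon$ follows since the Hausdorff distance between balls is at most the distance of centers plus the difference of radii.

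Your caveat about convexity of $U$ is well taken. The paper's statement omits this hypothesis, but the Steiner ball $B_U$ is only defined for convex sets in Section~2, and the identity $d_{\mathcal H}(\Omega_0,U)=\sup_{u\in\mathbb S^{d-1}}|h(\Omega_0,u)-h(U,u)|$ requires convexity. In the only application of the lemma (Proposition~\ref{pro:pro}), $U=\Omega_{0,t}$ is an inner parallel set of a convex body and hence convex, so no issue arises. You may simply assume $U$ convex; the detour through $\mathrm{conv}(U)$ is unnecessary for the paper's purposes and, as you suspect, does not straightforwardly recover the conclusion for genuinely non-convex $U$.
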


\subsection{Some results on inner parallel sets and web functions}

We now recall some basic tools for handling a \emph{web function}, an evocative term coined by F. Gazzola (see \cite{gazzola1999existence}) for functions whose values are determined solely by the distance from the domain boundary. The name brings to mind the structure of a spider's web, an analogy drawn from the appearance of their level lines. We first introduce the notion of inner and outer parallel set.

\begin{definition}
Let $\Omega \subset \R^d$ be a convex set and $\rho(\Omega)$ be its inradius. For any $t \in [0,\rho(\Omega)]$, we define the \emph{inner parallel set} at distance $t$ relative to $\Omega$ the set
\[
\Omega_t := \Set{ x \in \Omega : d(x)>t } 
\]
where $d(x)$ is the distance of $x \in \Omega$ from the boundary of $\Omega$. For any $s\ge 0$, we define the \emph{outer parallel set} at distance $s$ relative to $\Omega$ the set
\[
\Omega^s := \Set{ x \in \R^d : d(x,\Omega)<s }. 
\]

\end{definition}

For the inner parallel sets the following differential estimate holds.

\begin{lemma}[\cite{brandolini2010upper}, Lemma 3.1]
    \label{lemma_derivata_perimetro_1}
    Let $\Omega$ be a bounded, convex, open set in $\R^d$. Then for almost every $t \in (0,r_{\Omega})$
    \begin{equation}
    \label{eq_lemma_1}
        -\frac{d}{dt} P(\Omega_t) \geq d(d-1) W_2(\Omega_t)
    \end{equation}
    and the equality holds if $\Omega$ is a ball.
\end{lemma}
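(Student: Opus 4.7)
The strategy I would take is to combine Steiner's formula, applied to the convex body $\Omega_t$, with a simple inclusion between inner and outer parallel sets. Because inner parallel sets of a convex body remain convex, the quermassintegrals $W_i(\Omega_t)$ are well-defined and Steiner's formula applies directly to $\Omega_t$ for every $t \in (0, r_\Omega)$.

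First I would establish the inclusion
\[
\Omega_t + sB \subseteq \Omega_{t-s} \qquad \text{for every } 0 < s < t,
\]
which follows immediately from the distance estimate $d(x+y, \partial\Omega) \geq d(x, \partial\Omega) - |y| > t - s$ whenever $x \in \Omega_t$ and $|y| \leq s$. Since both sides are convex, monotonicity of the perimeter under inclusion for convex bodies gives $P(\Omega_t + sB) \leq P(\Omega_{t-s})$. On the other hand, Steiner's formula applied to $\Omega_t$ reads $|\Omega_t + sB| = \sum_{i=0}^{d} \binom{d}{i} W_i(\Omega_t)\, s^i$, and differentiating in $s$ yields
\[
P(\Omega_t + sB) = \sum_{i=1}^{d} i \binom{d}{i} W_i(\Omega_t)\, s^{i-1} = P(\Omega_t) + d(d-1)\, W_2(\Omega_t)\, s + O(s^2).
\]

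Combining these two estimates,
\[
\frac{P(\Omega_{t-s}) - P(\Omega_t)}{s} \geq d(d-1)\, W_2(\Omega_t) + O(s),
\]
and since $t \mapsto P(\Omega_t)$ is monotone nonincreasing on $(0, r_\Omega)$, it is differentiable almost everywhere there. Letting $s \to 0^+$ at any differentiability point yields the desired bound $-\frac{d}{dt} P(\Omega_t) \geq d(d-1)\, W_2(\Omega_t)$. For the equality case when $\Omega = B_R$, direct computation gives $\Omega_t = B_{R-t}$, so $P(\Omega_t) = d\omega_d (R-t)^{d-1}$ and $W_2(\Omega_t) = \omega_d (R-t)^{d-2}$, and both sides equal $d(d-1)\, \omega_d (R-t)^{d-2}$. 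I do not anticipate a serious obstacle in any single step; the subtlest point is the continuity of $t \mapsto W_2(\Omega_t)$, needed to absorb the $O(s)$ remainder at the limit, which follows from the Hausdorff continuity of $t \mapsto \Omega_t$ on convex bodies together with the continuity of quermassintegrals in the Hausdorff topology.
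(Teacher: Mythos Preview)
Your argument is correct. The paper does not actually prove this lemma---it is quoted from \cite{amato2024estimates} (Lemma~2.5 there) without proof---so there is no in-paper argument to compare against. Your approach via the inclusion $\Omega_t + sB \subseteq \Omega_{t-s}$, perimeter monotonicity for convex bodies, and the Steiner expansion of $P(\Omega_t + sB)$ is standard and complete.

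One minor remark: your stated concern about the continuity of $t \mapsto W_2(\Omega_t)$ is unnecessary. The $O(s)$ remainder in the difference quotient has coefficients $W_i(\Omega_t)$ for $i \geq 3$, which depend only on the \emph{fixed} level $t$, not on $s$; hence the remainder tends to $0$ as $s \to 0^+$ with $t$ held fixed, without any appeal to continuity in $t$.
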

A standard combination of the previous lemma with the chain rule, together with the equality $\abs{\nabla d(x)} = 1$ almost everywhere, entails the following result (see also \cite[Lemma 3.2]{brandolini2010upper} and \cite[Lemma 2.8]{paoli2020sharp}).
\begin{lemma}
    \label{lemma_derivata_perimetro_2}
    Let $f \colon [0,+\infty) \to [0,+\infty)$ be a nondecreasing $C^1$ function. 
    Set $u(x)=f(d(x))$ and
    \[
    E_t = \Set{ x \in \Omega \, : \, u(x) > t }
    \]
    then
    \begin{equation}
        \label{eq_lemma_2}
        -\frac{d}{dt} P(E_t) \geq d(d-1) \frac{W_2(E_t)}{\abs{\nabla u}_{u=t}}
    \end{equation}
\end{lemma}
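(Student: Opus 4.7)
The plan is to reduce the inequality to Lemma \ref{lemma_derivata_perimetro_1} through the change of variable $s=f^{-1}(t)$. Since $f\colon[0,+\infty)\to[0,+\infty)$ is $C^1$, nondecreasing and vanishes at the origin, I would restrict attention to parameters $t$ in the essential range of $u$ for which $f$ is locally strictly increasing near $f^{-1}(t)$; the complementary set of $t$ is negligible, and by the coarea formula \eqref{coarea} the level sets $\{u=t\}$ then have $|\nabla u|>0$ on a subset of full $\mathcal{H}^{d-1}$-measure, so the right-hand side of \eqref{eq_lemma_2} is well posed. For such $t$, setting $s(t):=f^{-1}(t)$, the chain of equivalences $u(x)>t\iff f(d(x))>t\iff d(x)>s(t)$ gives
\[
E_t=\Omega_{s(t)},\qquad P(E_t)=P(\Omega_{s(t)}).
\]

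Next, I would differentiate this identity in $t$. As $s$ is the inverse of $f$ on the relevant interval, $s'(t)=1/f'(s(t))$, and $\sigma\mapsto P(\Omega_\sigma)$ is a.e. differentiable on $(0,\rho(\Omega))$ since it is monotone non-increasing. The chain rule, applied at almost every $t$, yields
\[
-\frac{d}{dt}P(E_t)=\left(-\frac{d}{d\sigma}P(\Omega_\sigma)\bigg|_{\sigma=s(t)}\right)\cdot\frac{1}{f'(s(t))}.
\]
Applying Lemma \ref{lemma_derivata_perimetro_1} at the parameter $s(t)$ and using $\Omega_{s(t)}=E_t$, the first factor on the right-hand side is bounded below by $d(d-1)W_2(E_t)$, whence
\[
-\frac{d}{dt}P(E_t)\;\ge\;\frac{d(d-1)\,W_2(E_t)}{f'(s(t))}.
\]

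It remains to identify the denominator with $|\nabla u|_{u=t}$. By the chain rule for Lipschitz compositions and the classical equality $|\nabla d(x)|=1$ a.e. in $\Omega$, one has $|\nabla u(x)|=f'(d(x))|\nabla d(x)|=f'(d(x))$ almost everywhere; since $d(x)=s(t)$ on the level set $\{u=t\}$, this gives $|\nabla u|_{u=t}=f'(s(t))$, which, substituted into the previous estimate, produces \eqref{eq_lemma_2}. The main technical obstacle will be the rigorous handling of the flat parts of $f$ (where $f'$ vanishes) and the merely BV regularity of $\sigma\mapsto P(\Omega_\sigma)$: both issues are absorbed by working at Lebesgue points of the distributional derivative and by excluding the negligible set of $t$ at which either $f$ is not locally strictly increasing or $P(\Omega_\cdot)$ is not differentiable, which suffices for the almost-everywhere statement in \eqref{eq_lemma_2}.
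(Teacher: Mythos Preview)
Your proof is correct and follows exactly the approach the paper indicates: the paper does not give a detailed proof but simply states that the lemma is ``a standard combination of the previous lemma with the chain rule, together with the equality $|\nabla d(x)|=1$ almost everywhere,'' citing \cite[Lemma 2.6]{amato2024estimates} and \cite[Lemma 2.8]{paoli2020sharp}. Your argument---the change of variable $s=f^{-1}(t)$, the identification $E_t=\Omega_{s(t)}$, the chain rule, and the computation $|\nabla u|_{u=t}=f'(s(t))$---is precisely this standard combination, with the technical caveats about flat parts of $f$ handled appropriately for the almost-everywhere statement.
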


A consequence of the previous lemma is that, if $f$ is \emph{nonincreasing} and we consider its \emph{sublevel} sets
 \[
    \hat{E}_t = \Set{ x \in \Omega \, : \, u(x) < t }
    \]
    then
    \begin{equation}
        \label{eq_lemma_2_NONINC}
        \frac{d}{dt} P(\hat{E}_t) \geq d(d-1) \frac{W_2(\hat{E}_t)}{\abs{\nabla u}_{u=t}}.
    \end{equation}

\section{The hybrid asymmetry for domains with holes}\label{sec:hybrid}

In this section we mention some basic facts about the asymmetry functional for domains with holes. We first explain our starting, intuitive idea to estimate the deviation of the hole, introduced by the current author, G. Paoli and G. Piscitelli in \cite{cpp}. Throughout the section we will always consider $p>1$ and $1\le q\le p$ in order to have the radiality of the minimizers for \eqref{pqbuco_min} for any spherical shell.

Let $z$ be the positive minimizer for $\tilde{\sigma}_{p,q}(A_{R_1,R_2})$ with $\|z\|_{L^q(\partial B_{R_2})}=1$ and denote by
$$z_m:=\inf_{A_{R_1,R_2}}z=z|_{\partial B_{R_1}}, \quad z_M:=\sup_{A_{R_1,R_2}}z=z|_{\partial B_{R_2}}.$$ 
Let define in $B_{R_2}$ the function
\begin{equation}\label{def_z}
\overline{z}:=\begin{cases}
    z &\text{in $A_{R_1,R_2}$},\\
    z_m &\text{in $\overline{B_{R_1}}$};
\end{cases}
\end{equation}
in other words, we extend the radial function $z$ in $\overline{B_{R_1}}$ with constant value $z_m$.

If $\Omega_0=B_{R_2}$ and $\Theta\neq B_{R_1}$, the most natural way to estimate the gap
$\tilde\sigma_{p,q}(A_{R_1,R_2})-\tilde\sigma_{p,q}(\Omega_0\setminus\overline{\Theta})=\tilde\sigma_{p,q}(B_{R_2}\setminus\overline{B_{R_1}})-\tilde\sigma_{p,q}(B_{R_2}\setminus\overline{\Theta})$ in terms of the deviation of $\Omega_0\setminus\overline{\Theta}=B_{R_2}\setminus\overline{\Theta}$ from $A_{R_1,R_2}=B_{R_2}\setminus\overline{B_{R_1}}$ is to estimate the deviation between the holes $\Theta$ and $B_{R_1}$. To do so, one can test both the variational formulations of $\tilde\sigma_{p,q}(B_{R_2}\setminus\overline{\Theta})$ and $\tilde\sigma_{p,q}(A_{R_1,R_2})$ with the same test function $\overline{z}$ (restricted respectively to each set). Let us notice that, in general, the restriction $\overline{z}|_{B_{R_2}\setminus\overline{\Theta}}$ is a test function for $B_{R_2}\setminus\overline{\Theta}$, but it is not a minimizer for the variational characterization in \eqref{pqbuco_min} if $\Theta\neq B_{R_1}$; on the other hand, $\overline{z}|_{A_{R_1,R_2}}=z$ attains the minimum $\tilde\sigma_{p,q}(A_{R_1,R_2})$. Moreover, it is immediate to see that $|B_{R_1}\setminus\overline{\Theta}|=|\Theta\setminus\overline{B_{R_1}}|$ (since $|B_{R_1}|=|\Theta|$ due to the volume constraint), that the boundary integrals coincide (since $\|\overline{z}\|_{L^q(\partial B_{R_2})}=\|z\|_{L^q(\partial B_{R_2})}=1$) and that $\nabla\overline{z}=0$ on $B_{R_1}$. 
Putting together all these remarks we get
\allowdisplaybreaks
\begin{align*}
\tilde{\sigma}_{p,q}(A_{R_1,R_2})&=\left(\int_{A_{R_1,R_2}}|\nabla z|^p\:dx+\int_{A_{R_1,R_2}}z^p\:dx\right)^{1/p}\\
&=\left(\int_{B_{R_2}}|\nabla \overline{z}|^p\:dx+\int_{B_{R_2}}\overline{z}^p\:dx-\int_{B_{R_1}}\overline{z}^p\:dx\right)^{1/p}\\
&=\left(\int_{B_{R_2}\setminus\overline{\Theta}}|\nabla\overline{z}|^p\:dx+\int_{\Theta\setminus\overline{B_{R_1}}}|\nabla \overline{z}|^p\:dx+\int_{B_{R_2}\setminus\overline{\Theta}}\overline{z}^p\:dx+\int_{\Theta}\overline{z}^p\:dx-\int_{B_{R_1}}\overline{z}^p\:dx\right)^{1/p}\\
&=\left(\int_{B_{R_2}\setminus\overline{\Theta}}|\nabla\overline{z}|^p\:dx+\int_{B_{R_2}\setminus\overline{\Theta}}\overline{z}^p\:dx+\int_{\Theta\setminus\overline{B_{R_1}}}|\nabla \overline{z}|^p\:dx+\int_{\Theta\setminus\overline{B_{R_1}}}\overline{z}^p\:dx-\int_{B_{R_1}\setminus\overline{\Theta}}\overline{z}^p\:dx\right)^{1/p}\\
&\ge\left(\tilde\sigma_{p,q}^p(B_{R_2}\setminus\overline{\Theta})+\int_{\Theta\setminus\overline{B_{R_1}}}|\nabla \overline{z}|^p\:dx+\int_{\Theta\setminus\overline{B_{R_1}}}\overline{z}^p\:dx-z_m^p|B_{R_1}\setminus\overline{\Theta}|\right)^{1/p}\\
&\ge\tilde\sigma_{p,q}(B_{R_2}\setminus\overline{\Theta})+\frac{ P(B_{R_2})^{\frac{p-1}{q}}}{p|A_{R_1,R_2}|^{\frac{p-1}{p}}}\int_{\Theta\setminus\overline{B_{R_1}}}\left(|\nabla \overline{z}|^p+\overline{z}^p-z_m^p\right)\:dx,
\end{align*}
where in the last inequality we used the Bernoulli inequality and the second geometric bound in Remark \ref{trivial} (see also the proof of Proposition \ref{teo:buco3} to see how the positive constant in front of the integral pops up in the general case). Since the last integral is strictly positive if and only if $\Theta\neq B_{R_1}$, it gives a quantitative estimate of the deviation of $\Theta$ from $B_{R_2}$. Of course, since the outer convex set is $\Omega_0=B_{R_2}$, the deviation of $B_{R_2}\setminus\overline{\Theta}$ from $A_{R_1,R_2}$ depends only on the deviation of $\Theta$ from $B_{R_1}$, as expected. The technical step forward is to reply the same idea with a general outer convex set $\Omega_0$.

In order to do that, we build a suitable test function for $\Omega_0\setminus\overline{\Theta}$ related to the normalized minimizer $z$ for the spherical shell $A_{R_1,R_2}$. In view of Proposition \ref{pro:radbuco}, $z$ is a multiple of $z_p$, minimizer for $\sigma_p(A_{R_1,R_2})$, thus it is radial. Since $z$ is radial on $A_{R_1,R_2}$, there exists $\tilde{\Psi}:[R_1,R_2]\to\R$ such that $z(y)=\tilde{\Psi}(|y|)$ for any $y\in A_{R_1,R_2}$. Moreover, in view of Proposition \ref{pro:prop_eigenf_buco}(iii), $\tilde{\Psi}'(r)>0$ for any $r\in]R_1,R_2[$.

We now introduce the function
\begin{equation}\label{eq:web}
{w}_{\Omega_0}(x):=
\begin{cases}
\tilde{\Psi}(R_2-d(x))\quad &\text{if} \  d(x)< R_2-R_1\\
z_m &\text{if} \ d(x)\geq R_2-R_1
\end{cases}
\quad x\in\Omega_0,
\end{equation}
 where $d(x)=d(x;\partial\Omega_0)$. To be clearer, the function $w_{\Omega_0}$ is built in the following way. Let $x\in\Omega_0$ with $d(x)=\delta\ge 0$; $w_{\Omega_0}(x)$ attains the same value as $\overline{z}$ at distance $\delta$ from $\partial B_{R_2}$. The map $w_{\Omega_0}$ depends only on the distance $d(x)$ from the outer boundary $\partial\Omega_0$ and is constant on the boundaries of the inner parallels to $\Omega_0$; in other words, $w_{\Omega_0}$ is a web function in $\Omega_0$.

An equivalent way to define the function $w_{\Omega_0}$ is the following. We first introduce  $\ell(t):=\abs{\nabla \overline{z}}_{\overline{z}=t}=\abs{\nabla z}_{z=t}$. The function $\ell(t)$ is well defined in $]z_m,z_M[$ in view of the radiality of $z$; moreover, $\ell(t)>0$ for any $t\in]z_m,z_M[$ since $\tilde{\Psi}'>0$. We now claim that
\begin{equation}\label{GFerone}
{w}_{\Omega_0}(x)=
\begin{cases}
\tilde{G}(d(x))\quad &\text{if} \  d(x)< R_2-R_1\\
z_m &\text{if} \ d(x)\geq R_2-R_1
\end{cases}
\quad x\in \Omega_0,
\end{equation}
where $\tilde{G}$ is defined by
\begin{equation*}
\tilde{G}^{-1}(t)=\int_{t}^{z_M}\dfrac{1}{\ell(\tau)}\;d\tau,
\end{equation*}
for any $t\in]z_m,z_M[$. Notice that $\tilde{G}^{-1}$ is strictly decreasing in $[z_m,z_M]$, then invertible, since the integrand function $1/\ell(t)$ is strictly positive; as a consequence $\tilde{G}$ is strictly decreasing in $[0,R_2-R_1]$.

 The equivalence between \eqref{eq:web} and \eqref{GFerone} is not immediately clear, so we prove it for the convenience of the reader. If $d(x)\ge R_2-R_1$, there is nothing to prove. If $d(x)\le R_2-R_1$, we show that for any $x\in\Omega_0$ such that $d(x)\le R_2-R_1$, it holds
\begin{equation}\label{eq:webequi}
    \tilde{G}(d(x))=\tilde{\Psi}(R_2-d(x)).
\end{equation}
To ease the notation, we set $r_x:=R_2-d(x)\in[R_1,R_2]$. Since $\tilde{\Psi}$ is invertible in $[R_1,R_2]$ with $\tilde{\Psi}(R_1)=z_m$ and $\tilde{\Psi}(R_2)=z_M$, one has
\begin{align*}
d(x)&=R_2-r_x=\tilde{\Psi}^{-1}(z_M)-\tilde{\Psi}^{-1}(\tilde{\Psi}(r_x))=\int_{\tilde{\Psi}(r_x)}^{z_M}\left(\tilde{\Psi}^{-1}\right)'(\tau)\:d\tau=\int_{\tilde{\Psi}(r_x)}^{z_M}\left(\left.\frac{1}{\tilde{\Psi}'(s)}\right|_{\tilde{\Psi}(s)=\tau}\right)\:d\tau\\
&=\int_{\tilde{\Psi}(r_x)}^{z_M}\dfrac{1}{\abs{\nabla z}_{z=\tau}}\;d\tau=\tilde{G}^{-1}(\tilde{\Psi}(r_x))=\tilde{G}^{-1}(\tilde{\Psi}(R_2-d(x))).
\end{align*}
By applying $\tilde{G}$ to the first and the last side of the equality we get \eqref{eq:webequi}.

We point out that in several references containing  comparison results via the web function method (see, for instance,  \cite{amato2024estimates,brandolini2010upper,bfnt,cpp}) the definition of the $w_{\Omega_0}$ is directly given via \eqref{GFerone}, since it turns out to be very useful in the computations for getting the required upper or lower estimates.

We conclude this section recalling that
\begin{gather*}
(w_{\Omega_0})_m:=\inf_{\Omega_0} w_{\Omega_0}\geq z_m,\\
(w_{\Omega_0})_M:=\sup_{\Omega_0} w_{\Omega_0} = z_M=\tilde{G}(0).
\end{gather*}
In addition, for every $t\in](w_{\Omega_0})_m,(w_{\Omega_0})_M[$ the following property holds:
$$|\nabla w_{\Omega_0}|_{w_{\Omega_0}=t}=|\nabla z|_{z=t}.$$
Indeed, recalling that $|\nabla d(x)|=1$, one has
\begin{align*}
|\nabla w_{\Omega_0}|_{w_{\Omega_0}=t}= |\tilde{G}'(d(x))|_{\tilde{G}(d(x))=t}|\nabla d(x)|=\frac{1}{|(\tilde{G}^{-1})'(t)|}=\frac{1}{\left|-\frac{1}{|\nabla z|_{z=t}}\right|}=|\nabla z|_{z=t}.
\end{align*}
We finally point out that $w_{\Omega_0}$ can be used as a test function for \emph{every} admissible holed set with outer domain $\Omega_0$.

\medskip

A crucial idea in our approach is the introduction of the so-called \emph{"shellifying"} hole $K_{\Omega_0}$, which plays the same role for $\Omega_0$ as the concentric spherical hole $B_{R_2}$ does when the exterior domain is the ball $B_{R_1}$.

\begin{definition}\label{def:k0}
Under the hypotheses and the notation above, we denote by $K_{\Omega_0}$ the inner parallel set relative to $\Omega_0$ having the same volume of $\Theta$, i.e. the set 
\begin{equation*}
K_{\Omega_0}
=\{ x\in\Omega_0\;:\; d(x)>t_{\Omega_0}  \},
\end{equation*}
with $t_{\Omega_0}\in(0,\rho (\Omega_0)) $ such that $|K_{\Omega_0}
|=|\Theta|$.
\end{definition}

The shellifying hole $K_{\Omega_0}$ enjoys the property that all points of $\partial K_0$ lie at the distance $t_{\Omega_0}$ from $\partial \Omega_0$. In this sense, the distance between the inner boundary and the outer boundary is kept uniform, as in a spherical shell. For this reason, we roughly refer to $\Omega_0 \setminus \overline{K_{\Omega_0}}$ as the "$\Omega_0$-shell".

Now we can give the following definition of weak inner asymmetry of the hole.

\begin{definition}[weak inner Asymmetry]
Under the hypotheses and the notation above, we denote with the symbol $\tilde{\mathcal{A}}(\Theta; \Omega_0)$ and call weak weighted Fraenkel-type asymmetry of  $\Theta$ relative to the set $\Omega_0$, the nonnegative quantity 
\begin{equation}\label{eq:atilde}
\tilde{\mathcal{A}}(\Theta;\Omega_0):=\int_{\Theta\setminus \overline{K_{\Omega_0}} 
}(|\nabla w_{\Omega_0}|^p+w_{\Omega_0}^p(x)-z_m^p)\:dx.
\end{equation}
\end{definition}
Let us notice that $|\nabla w_{(\Omega_0)}|^p+w_{\Omega_0}^p-z_m^p=0$ in $K_{\Omega_0}\setminus\Theta$, so $\tilde{\mathcal{A}}(\Theta,{\Omega_0})$ can be also written as
\[
\tilde{\mathcal{A}}(\Theta;{\Omega_0})=\int_{\Theta\triangle K_{\Omega_0}}(|\nabla w_{(\Omega_0)}|^p+w_{\Omega_0}^p-z_m^p)dx=Vol_{(|\nabla w_{(\Omega_0)}|^p+w_{\Omega_0}^p-z_m^p)}(\Theta\triangle K_{\Omega_0}),
\]
hence the name "weighted Fraenkel-type" asymmetry. The reason why we refer to $\tilde{\mathcal{A}}(\Theta;{\Omega_0})$ as a "weak" asymmetry will be clarified in Remark \ref{renna2}, once the full asymmetry functional will be introduced.

\medskip

Now, to estimate the deviation of the container $\Omega_0$ from a ball, the situation is more involved. Referring to \cite{cpp}, the linear case $p=q=2$ allows a Fuglede type approach, due to the fact that the minimizing function is explicit in that case. This allows to prove that the distance between $\Omega_0$ and $B_{R_2}$ can be given in terms of their Hausdorff distance, via the modulus of continuity $g$ defined in \eqref{function_g}. In the general case addressed here, however, the implicit nature of the minimizer $z$ for $\tilde{\sigma}_{p,q}(A_{R_1,R_2})$ prevents such a direct approach. 

In order to ease the readability of the section, we introduce the convex sublevel sets
\begin{equation}\label{set_def}
   \Omega_{0,t}=\{x\in \Omega_0\; | \; w_{\Omega_0}(x)<t  \}, \quad A_{0,t}=\{x\in B_{R_2}\;|\;\overline{z}(x)<t\}.
\end{equation}
Notiche that for any $t\in[z_m,z_M]$, $A_{0,t}$ is a ball since $\overline{z}$ is radial. Moreover
$$P(\Omega_{0,z_M})=P(\Omega_{0})=P(B_{R_2})=P(A_{0,z_M}).$$
In particular, using a standard comparison argument, we will show that $P(A_{0,t})\ge P(\Omega_{0,t})$ for any $t\in[z_m,z_M]$ in the Proof of Main Theorem 3. 

In order to give a quantitative enhancement of \eqref{bucostab_ineq_intro1} and introduce an appropriate asymmetry for $\Omega_0$, we need an improved, quantitative, version of $P(A_{0,t})\ge P(\Omega_{0,t})$, at least for the sublevel sets that are close to the maximal value.
\begin{lemma}\label{pro:pro}
Let $\Omega_0$ be a convex domain. Then, there exists a constant $T(\Omega_0)\in]z_m,z_M[$ such that, for any $t\in[T(\Omega_0),z_M]$, it holds
\begin{equation}\label{eq:quantinner}
P(A_{0,t})\ge P(\Omega_{0,t})+C(d)\tilde{G}^{-1}(t) d_\mathcal{H}^{\frac{d+3}{2(d-1)}}(\Omega_0;B_{\Omega_0})
\end{equation}
\end{lemma}
\begin{proof}
For any  $z_m<t<z_M$, combining \eqref{eq_lemma_2_NONINC} and \eqref{eq:afq} for the sublevel set $\Omega_{0,t}$ we have 
\begin{equation}\label{eq:asinner1}
    \frac{d}{dt} P(\Omega_{0,t})\geq d(d-1)\dfrac{W_2(\Omega_{0,t})}{\ell(t)}\geq C(d) \dfrac{\left(P(\Omega_{0,t})\right)^{\frac{d-2}{d-1}}}{\ell(t)}+C(d)\dfrac{d_{\mathcal{H}}(\Omega_{0,t},B_{\Omega_{0,t}})^{\frac{d+3}{2(d-1)}}}{\ell(t)},
\end{equation}
where 
$$ C(d)=d(d-1)d^{-\frac{d-2}{d-1}}w_d^{\frac{1}{d-1}}.$$
Since $A_{0,t}$ is a ball, it holds
\begin{equation}\label{eq:asinner2}
    \frac{d}{dt} P(A_{0,t})= C(d) \dfrac{\left(P(A_{0,t})\right)^{\frac{d-2}{d-1}}}{\ell(t)},
\end{equation}

Let us consider $$T_1(\Omega_0):=\inf\left\{s\in[z_m,z_M]: d_{\mathcal{H}}(\Omega_{0,s},B_{\Omega_{0,s}})
\ge\frac{d_{\mathcal{H}}(\Omega_{0},B_{\Omega_{0}})
}{2}\ and\ \frac{\ell(z_M)}{2}\le\ell(s)\le2\ell(z_M)\right\}$$
(notice that $T_1(\Omega_0)\neq z_M$). Now, we divide the proof in two different cases.

\medskip

\noindent \textbf{Case} $\mathbf{d=2}$. In this case, it is enough to consider $T(\Omega_0):=T_1(\Omega_0)$. Indeed, \eqref{eq:asinner1} and \eqref{eq:asinner2} become, respectively
\begin{equation}\label{eq:asinner3}
    \frac{d}{dt} P(\Omega_{0,t})\geq C(d) \dfrac{1}{\ell(t)}+C(d)\dfrac{d_{\mathcal{H}}(\Omega_{0,t},B_{\Omega_{0,t}})^{\frac{d+3}{2(d-1)}}}{\ell(t)}
\end{equation}
and
\begin{equation}\label{eq:asinner4}
    \frac{d}{dt} P(A_{0,t})= C(d) \dfrac{1}{\ell(t)}.
\end{equation}
Subtracting \eqref{eq:asinner4} from \eqref{eq:asinner3} and integrating from $t$ to $z_M$ we get that, for any $t\in[z_m,z_M]$:
\begin{equation*}
\int_t^{z_M}\frac{d}{ds} \left[P(\Omega_{0,s})-P(A_{0,s})\right]\:ds\ge C(d)\int_t^{z_M}\dfrac{d_{\mathcal{H}}(\Omega_{0,s},B_{\Omega_{0,s}})^{\frac{d+3}{2(d-1)}}}{\ell(s)}\:ds
\end{equation*}
Noticing that
\begin{equation*}
\int_t^{z_M}\frac{d}{ds} \left[P(\Omega_{0,s})-P(A_{0,s})\right]\:ds=P(\Omega_0)-P(B_{R_2})-P(\Omega_{0,t})+P(A_{0,t})=P(A_{0,t})-P(\Omega_{0,t}),
\end{equation*}
we conclude
\begin{equation}\label{eq:asinner5}
P(A_{0,t})\ge P(\Omega_{0,t})+ C(d)\int_t^{z_M}\dfrac{d_{\mathcal{H}}(\Omega_{0,s},B_{\Omega_{0,s}})^{\frac{d+3}{2(d-1)}}}{\ell(s)}\:ds
\end{equation}
By definition, for any $s\in [T(\Omega_0),z_M]$, it holds
$$
d_{\mathcal{H}}(\Omega_{0,s},B_{\Omega_{0,s}})
\ge\frac{d_{\mathcal{H}}(\Omega_{0},B_{\Omega_{0}})
}{2}.
$$
Plugging the estimate in \eqref{eq:asinner5} we finally get for any $t\in [T(\Omega_0),z_M]$
\begin{align*}
    P(A_{0,t})\ge P(\Omega_{0,t})+C'(d)d_{\mathcal{H}}(\Omega_{0},B_{\Omega_{0}})^{\frac{d+3}{2(d-1)}}\int_t^{z_M}\dfrac{1}{\ell(s)}\:ds,
\end{align*}
where the last integral is $\tilde{G}^{-1}(t)$ and $C'$ takes into account all the previous constants.

\medskip
\noindent \textbf{Case} $\mathbf{d>2}$. In higher dimension, the situation is more involved, since putting together \eqref{eq:asinner1} and \eqref{eq:asinner2} we just have
$$
P(A_{0,t})\ge P(\Omega_{0,t})+ C(d)\int_t^{z_M}\dfrac{d_{\mathcal{H}}(\Omega_{0,s},B_{\Omega_{0,s}})^{\frac{d+3}{2(d-1)}}}{\ell(s)}\:ds+C(d)\int_t^{z_M}\dfrac{\left(P(\Omega_{0,t})\right)^{\frac{d-2}{d-1}}-\left(P(A_{0,s})\right)^{\frac{d-2}{d-1}}}{\ell(s)}\:ds,
$$
where the last summand is negative and so not helpful for our purposes. In other words, it does not seem available a comparison on the whole interval $[z_m,z_M]$. We adopt a slightly different strategy. Thus, for any $t\in[T_1(\Omega_0),z_M]$ \eqref{eq:asinner1} becomes
$$
\frac{d}{dt} P(\Omega_{0,t})\geq C(d) \dfrac{\left(P(\Omega_{0,t})\right)^{\frac{d-2}{d-1}}}{\ell(t)}+C'(d)\dfrac{d_{\mathcal{H}}(\Omega_{0},B_{\Omega_{0}})^{\frac{d+3}{2(d-1)}}}{2\ell(z_M)}.
$$
On the other hand, in $t=z_M$ one has
\begin{align*}
\left.\frac{d}{dt} P(\Omega_{0,t})\right|_{t=z_M}&\geq C(d) \dfrac{\left(P(\Omega_{0})\right)^{\frac{d-2}{d-1}}}{\ell(z_M)}+C'(d)\dfrac{d_{\mathcal{H}}(\Omega_{0},B_{\Omega_{0}})^{\frac{d+3}{2(d-1)}}}{2\ell(z_M)}\\
&=C(d) \dfrac{\left(P(B_{R_2})\right)^{\frac{d-2}{d-1}}}{\ell(z_M)}+C'(d)\dfrac{d_{\mathcal{H}}(\Omega_{0},B_{\Omega_{0}})^{\frac{d+3}{2(d-1)}}}{2\ell(z_M)}\\
&=\left.\frac{d}{dt} P(A_{0,t})\right|_{t=z_M}+C'(d)\dfrac{d_{\mathcal{H}}(\Omega_{0},B_{\Omega_{0}})^{\frac{d+3}{2(d-1)}}}{2\ell(z_M)},
\end{align*}
i.e. the final derivative of $P(\Omega_{0,t})$ is strictly larger than the final derivative of $P(A_{0,t})$. As a consequence, there exists $T_2(\Omega_0)\in[z_m,z_M[$ such that
$$
\frac{d}{dt} P(\Omega_{0,t})\ge \frac{d}{dt} P(A_{0,t})+C'(d)\dfrac{d_{\mathcal{H}}(\Omega_{0},B_{\Omega_{0}})^{\frac{d+3}{2(d-1)}}}{4\ell(z_M)}
$$
for any $t\in[T_2(\Omega_0),z_M]$. Let us set $T(\Omega_0):=\max\{T_1(\Omega_0),T_2(\Omega_0)\}$. By construction, the estimate above becomes
$$
\frac{d}{dt} P(\Omega_{0,t})\ge \frac{d}{dt} P(A_{0,t})+C'(d)\dfrac{d_{\mathcal{H}}(\Omega_{0},B_{\Omega_{0}})^{\frac{d+3}{2(d-1)}}}{8\ell(t)}.
$$
Integrating as in the 2 dimensional case the thesis is achieved.
\end{proof}

We point out that the quantity $\tilde{G}^{-1}(t)$ could also be replaced by $z_M-t$, with a small modification of the multiplicative constant, since the denominators $\ell(z_M)$ and $\ell(s)$ appearing in the additional term are comparable.

Some comments are in order. We introduced the "critical level" $T(\Omega_0)$ in order to handle basically three things. Firstly, it is necessary a range of levels in which Lemma \ref{propagation_lemma} holds. Even if it seems natural that the inner parallel of a given convex set $\Omega_0$ has larger (or equal) asymmetry than $\Omega_0$ (roughly speaking, the more you go inside, the more you shrink), it does not seem available a proof of this fact up to our knowledge. A second issue is the lack of a quantitative comparison between $P(A_{0,t})$ and $P(\Omega_{0,t})$ for $d>2$ holding for any $t$, not only near $z_M$. Finally, the uniform bound on $\ell(s)$ in $[T(\Omega_0),z_M]$ could be removed if, for instance, $z$ was convex.

Anyway, the additional term appearing in \eqref{eq:quantinner} is strictly positive whenever $\Omega_0\neq B_{R_2}$ and equals zero if and only if $\Omega_0=B_{R_2}$.

\begin{definition}[outer Asymmetry]
Under the hypotheses and the notation above, we denote with the symbol $\tilde{\alpha}_{out}(\Omega_0)$ and call outer asymmetry of $\Omega_0$, the nonnegative quantity 
\begin{equation}\label{eq:out}
\tilde{\alpha}_{out}(\Omega_0):=(z_M-T(\Omega_0))^3d_\mathcal{H}^{\frac{d+3}{2(d-1)}}(\Omega_0;B_{\Omega_0}).
\end{equation}
where $T(\Omega_0)\in[z_m,z_M[$ is defined in Lemma \ref{pro:pro}.
\end{definition}

Just a few comments about the definition above. Given that quantitative inequalities for domains with holes are still in an early stage of development, our primary goal was to provide a functional capable of capturing the distance from radial symmetry to obtain a quantitative estimate that was previously unavailable. While the resulting outer asymmetry functional may appear specifically tailored to the $p,q$-setting, it fully meets our objective of characterizing the distance of $\Omega_0$ from a ball. Within the actual scope of this research, we consider this a satisfactory achievement as it fulfills the requirement of vanishing if and only if the domain $\Omega_0$ is a ball.

From a technical point of view, the estimate of the deviation of $\Omega_0$ from the spherical symmetry is obtained mixing an argument of propagation of asymmetry (i.e. comparing the asymmetry of some sublevel sets of the function $w_{\Omega_0}$ with the asymmetry of $\Omega_0$) with classical local comparison theorems between solutions of ordinary differential equations. The level $T(\Omega_0)<z_M$ is the lowest level for which both those comparisons hold. The factor $(z_M-T(\Omega_0))^3$ pops out after the integration of a squared height, see the proof of Proposition \ref{pro:buco2}. We point out that the ball from which we quantify the spherical deviation of $\Omega_0$ is the Steiner ball. Moreover, the power of the Hausdorff distance is the same appearing in the quantitative Aleksandrov-Fenchel inequality \eqref{eq:afq}, which is exploited to give a quantitative enhancement of the comparison between the perimeters of the sublevel sets of $w_{\Omega_0}$ and the perimeters of the spherical sublevel sets of $\overline{z}$. 

We are now in a position to define our full asymmetry functional for domains with holes.

\begin{definition}[Hybrid asymmetry]
\label{inner_asymmetry_def}
We define the hybrid asymmetry of $\Omega_0\setminus\overline{\Theta}$ as
\begin{equation} \label{hybrid}
\alpha_{hyb}(\Omega_0\setminus\overline{\Theta}):=\max\left\{\tilde{\alpha}_{out}(\Omega_0),\tilde{\mathcal{A}}(\Theta;\Omega_0)\right\}.
\end{equation} 
\end{definition}

The definition of hybrid asymmetry above is motivated by the following, natural, question. If $\Omega_0$ is the container, what is the most suitable hole to compare $\Theta$ with? Intuitively, it is neither a ball $B_r$ such that $|B_r|=|\Theta|$ nor the cavity of the maximal spherical shell $A_{R_1,R_2}$ (i.e. $B_{R_1}$), but rather the inner parallel $K_{\Omega_0}$ with the same measure as $\Theta$. 

This answer is motivated by the fact that $\partial K_{\Omega_0}$ and $\partial\Omega_0$ lie at uniform distance $t_{\Omega_0}$ (see Definition \ref{def:k0}), recollecting the analogous property of the spherical shell $A_{R_1,R_2}$, for which the boundaries $\partial B_{R_1}$ and $\partial B_{R_2}$ lie at uniform distance $R_2-R_1$. It is worth noting, however, that $K_{\Omega_0}=B_{R_1}$ if and only if $\Omega_0 = B_{R_2}$.

The quantity $\tilde{\alpha}_{out}(\Omega_0)$ quantifies the deviation of the domain $\Omega_0$ from its Steiner ball with respect to the Hausdorff distance. The quantity $\tilde{\mathcal{A}}(\Theta;\Omega_0)$ gives information about how far is $\Theta$ from the optimal cavity relatively to $\Omega_0$, namely $K_{\Omega_0}$. 
Anyway, both terms are necessary to estimate a distance from the spherical shell. Indeed, in the following proposition, we show that \eqref{hybrid} actually quantifies the deviation of an admissible set from the optimal set $A_{R_1,R_2}$.

\begin{proposition}
\label{renna}
The functional $\alpha_{hyb}$ is actually an asymmetry index, i.e. $\alpha_{hyb}(\Omega_0\setminus\overline{\Theta})\ge 0$ for any admissible set $\Omega_0\setminus\overline{\Theta}$ with $P(\Omega_0)=P(B_{R_2})$ and $|\Omega_0\setminus\overline{\Theta}|=|A_{R_1,R_2}|$ and 
$$\alpha_{hyb}(\Omega_0\setminus\overline{\Theta})=0\qquad\Leftrightarrow\qquad\Omega_0\setminus\overline{\Theta}=A_{R_1,R_2}.$$
\end{proposition}
\begin{proof}
If $\Omega_0\setminus\overline{\Theta}=A_{R_1,R_2}$, then $w_{\Omega_0}=\overline{z}$, $K_{\Omega_0}=B_{R_1}$, $B_{\Omega_0}=B_{R_2}$ and then it is immediate that $\alpha_{hyb}(\Omega_0\setminus\overline{\Theta})=0$.

Now, let us assume that $\alpha_{hyb}(\Omega_0\setminus\overline{\Theta})=0$ and show that $\Omega_0=B_{R_2}$ and $\Theta=B_{R_1}$. If $\alpha_{hyb}(\Omega_0\setminus\overline{\Theta})=0$, then both $\tilde{\alpha}_{out}(\Omega_0)=0$ and $\tilde{\mathcal A}(\Theta;\Omega_0)=0$. Let us argue by contradiction and suppose that $\Omega_0\setminus\overline{\Theta}\neq A_{R_1,R_2}$. Since $\tilde{\alpha}_{out}(\Omega_0)=0$, then necessarily $\Omega_0$ coincides with its Steiner ball and has the same perimeter as $B_{R_2}$; consequently $\Omega_0$ is itself the ball $B_{R_2}$, $w_{\Omega_0}=\overline{z}$ and $K_{\Omega_0}=B_{R_1}$. Then, since we are under the hypothesis $\Omega_0\setminus\overline{\Theta}\neq A_{R_1,R_2}$, $\Theta$ cannot coincide with $B_{R_1}$. As a consequence we obtain
$$\tilde{\mathcal A}(\Theta;\Omega_0)=\tilde{\mathcal A}(\Theta;B_{R_2})=\int_{\Theta\setminus \overline{B_{R_1}} 
}(|\nabla \overline{z}|^p+\overline{z}^p(x)-z_m^p)\:dx>0,$$
since $\overline{z}>z_m$ in $\Theta\setminus \overline{B_{R_1}}$, getting a contradiction. 
\end{proof}

In the following remark, we clarify why we refer to $\tilde{\mathcal{A}}(\Theta;\Omega_0)$ as a "weak" asymmetry; this justifies the necessity of both terms for an asymmetry functional defined on domains with holes.

\begin{remark}\label{renna2}
We point out that $\tilde{\mathcal{A}}(\Theta;\Omega_0)$ could be zero for some $\Theta \neq K_{\Omega_0}$ (and in this sense is called "weak"). This happens whenever a hole $\Theta \neq K_{\Omega_0}$ lies at a distance larger than $R_2 - R_1$ from $\partial\Omega_0$, i.e., in the region of $\Omega_0$ where $w_{\Omega_0}$ takes the constant value $z_m$. More precisely, if$$t_{\Omega_0} > R_2 - R_1 \quad \text{and} \quad 0 < d_{\mathcal{H}}(\Theta, K_{\Omega_0}) < t_{\Omega_0} - (R_2 - R_1),$$
then $w_{\Omega_0} \equiv z_m$ on $\Theta \setminus K_{\Omega_0}$, and so $\tilde{\mathcal{A}}(\Theta;\Omega_0)=0$. In other words, the actual integration domain for $\tilde{\mathcal{A}}(\Theta;\Omega_0)$ is outside the plateau where $w_{\Omega_0} \equiv z_m$, see Figure \ref{fig:hybridvera}.

 \begin{figure}[!h]
    \centering    \includegraphics[width=.3\textwidth]{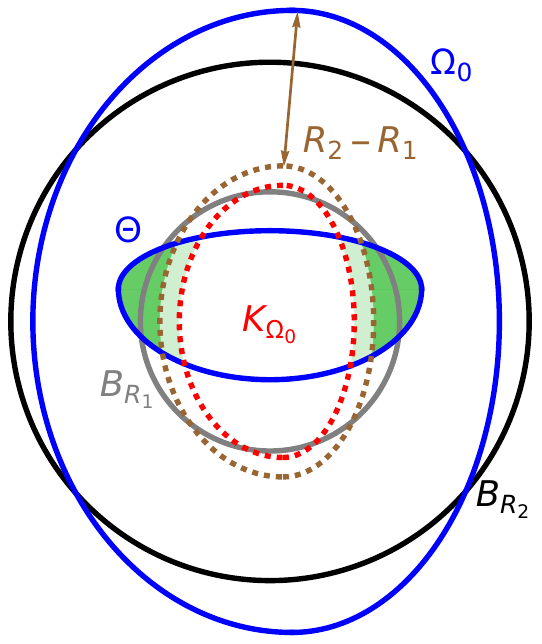}
\caption{The darker green set is the actual integration domain in the definition of $\tilde{\mathcal A}(\Theta;\Omega_0)$.}
    \label{fig:hybridvera}
\end{figure}
Nevertheless, in that case, the inequality $t_{\Omega_0} > R_2 - R_1$ implies that $\Omega_0$ cannot coincide with $B_{R_2}$; thus, the global asymmetry satisfies
\begin{equation*}
\alpha_{hyb}(\Omega_0 \setminus \overline{\Theta}) = \max\left\{ \tilde{\alpha}_{out}(\Omega_0), \tilde{\mathcal{A}}(\Theta;\Omega_0)\right\} = \tilde{\alpha}_{out}(\Omega_0) > 0.
\end{equation*}
This is also consistent with Proposition \ref{renna}, since $\Omega_0 \setminus \overline{\Theta} \neq A_{R_1,R_2}$ cannot have zero deviation from the spherical shell.
\end{remark}

We point out another interesting feature of the hybrid asymmetry. The functional $\alpha_{hyb}$ actually quantifies both the distance from being spherical of $\Omega_0$, and the global distance of $\Omega_0\setminus\overline{\Theta}$ from the "$\Omega_0$-shell" $\Omega_0\setminus\overline{K_{\Omega_0}}$. As a consequence, it detects the best competitor between, for instance, two domains having the same Hausdorff or Fraenkel distance from the spherical shell, but with different mass distribution around the hole. To put it another way, $\alpha_{hyb}$ is lower if $\Omega_0\setminus\overline{\Theta}$ is closer to $\Omega_0\setminus\overline{K_{\Omega_0}}$. For instance, $\alpha_{hyb}$ is useful to detect the most symmetric set in the case of two equal containers with the same hole, but in different relative position (see Figure \ref{simasim}).

\begin{figure}[!h]
\centering
\includegraphics[width=.3\textwidth]{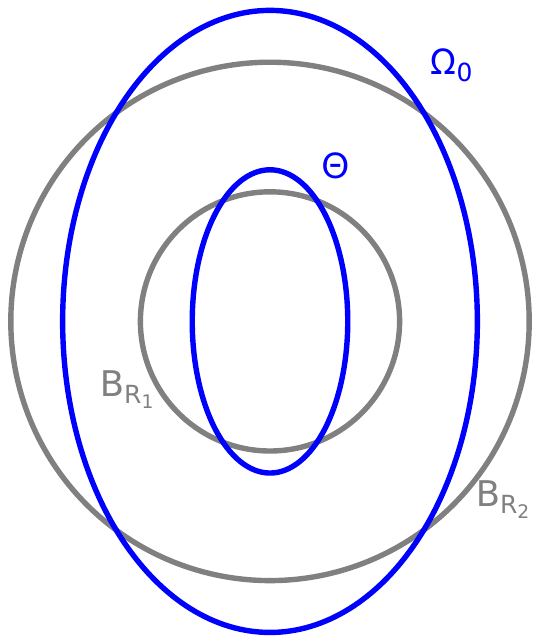}\qquad\includegraphics[width=.3\textwidth]{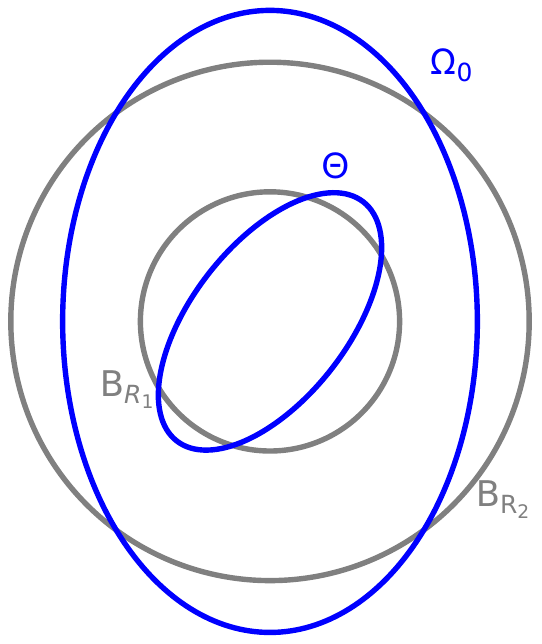}
\caption{The blue set on the left is clearly more symmetric than the blue set on the right. The hybrid asymmetry recognizes their difference, even though their canonical Hausdorff and Fraenkel distances from the gray spherical shell are the same.}
\label{simasim}
\end{figure}

\section{Proof of the Main Results: convex domains}
\label{main_sec_conv}
In this section we provide the proof of the main results for convex sets. The core of the section is the next proposition, that gives us the maximality of the ball for $\sigma_{p,q}$. The proof follows a similar scheme to Theorem 1.2 in \cite{amato2024estimates}.

\begin{proposition}\label{pro:gferone}
Let $\Omega\subset\R^d$ be convex and let $\Omega^\star$ be the ball having the same perimeter as $\Omega$. Then
$$\sigma_{p,q}(\Omega^\star)\ge\sigma_{p,q}(\Omega)+C(p,q,d,P(\Omega^\star))(|\Omega^\star|-|\Omega|).$$
\end{proposition}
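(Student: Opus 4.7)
The plan is to execute the web-function dearrangement scheme in the style of Theorem 1.2 of \cite{amato2024estimates}, taking as reference the radial minimizer on the ball provided by Propositions \ref{pro:eigball} and \ref{pro:radball}. Let $z_p(x)=\Psi(|x|)$ be the positive, radially increasing minimizer for $\sigma_{p,q}(\Omega^\star)$ normalized so that $\int_{\partial\Omega^\star}z_p^q\,d\mathcal{H}^{d-1}=1$, and set $z_m:=\Psi(0)$, $z_M:=\Psi(R^\star)=P(\Omega^\star)^{-1/q}$, and $\ell(t):=|\nabla z_p|_{z_p=t}$ for $t\in[z_m,z_M]$, where $R^\star$ denotes the radius of $\Omega^\star$. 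On $\Omega$ define the dearranged web function
\[
w_\Omega(x):=\begin{cases} G(d(x)) & \text{if } d(x)<R^\star,\\ z_m & \text{if } d(x)\ge R^\star,\end{cases}
\]
where $G:[0,R^\star]\to[z_m,z_M]$ is the decreasing map characterized by $G^{-1}(t)=\int_t^{z_M}\ell(\tau)^{-1}\,d\tau$, so that $G(0)=z_M$, $G(R^\star)=z_m$, and every level set of $w_\Omega$ inherits the gradient magnitude $\ell(t)$ of the corresponding level set of $z_p$.

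Since $w_\Omega\equiv z_M$ on $\partial\Omega$ and $P(\Omega)=P(\Omega^\star)$, one has $\|w_\Omega\|_{L^q(\partial\Omega)}=z_M P(\Omega)^{1/q}=1=\|z_p\|_{L^q(\partial\Omega^\star)}$. Hence, using $w_\Omega$ as a competitor in \eqref{pq_min} reduces the proof to the energy deficit estimate
\[
\int_{\Omega^\star}\bigl(|\nabla z_p|^p+z_p^p\bigr)\,dx - \int_{\Omega}\bigl(|\nabla w_\Omega|^p+w_\Omega^p\bigr)\,dx \;\ge\; z_m^p\bigl(|\Omega^\star|-|\Omega|\bigr).
\]
Perimeter monotonicity for nested convex bodies, applied to a ball inscribed in $\Omega$, forces $\rho(\Omega)\le R^\star$, so $w_\Omega=G(d(x))$ a.e.\ on $\Omega$; the coarea formula with $|\nabla d|=1$ then yields the one-dimensional representation
\[
\int_\Omega\bigl(|\nabla w_\Omega|^p+w_\Omega^p\bigr)\,dx = \int_0^{\rho(\Omega)}H(s)\,P(\Omega_s)\,ds,\qquad H(s):=|G'(s)|^p+G(s)^p,
\]
together with its analogue on $\Omega^\star$ carrying $P((\Omega^\star)_s)$ and integration range $[0,R^\star]$.

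The crucial step is the perimeter comparison $P(\Omega_s)\le P((\Omega^\star)_s)$ on $[0,\rho(\Omega)]$. Setting $\phi(s):=P(\Omega_s)^{1/(d-1)}$, Lemma \ref{lemma_derivata_perimetro_1} chained with \eqref{Aleksandrov_Fenchel_W_2} yields $-\dot\phi(s)\ge(d\omega_d)^{1/(d-1)}$ almost everywhere, with equality attained by the ball; since $\phi(0)=\phi^\star(0)=P(\Omega^\star)^{1/(d-1)}$, an ODE comparison gives $\phi\le\phi^\star$ and hence $P(\Omega_s)\le P((\Omega^\star)_s)$. Splitting the ball integral on $[0,\rho(\Omega)]\cup[\rho(\Omega),R^\star]$, using $G\ge z_m$ to bound $H\ge z_m^p$, and invoking the Cavalieri identity $|\Omega|=\int_0^{\rho(\Omega)}P(\Omega_t)\,dt$ together with its ball counterpart, one arrives precisely at the lower bound $z_m^p(|\Omega^\star|-|\Omega|)$ asserted above.

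Finally, the $p$th-power estimate is transferred into the claimed linear one via the elementary inequality $a^p-b^p\le pa^{p-1}(a-b)$ for $0\le b\le a$, combined with the a priori upper bound $\sigma_{p,q}(\Omega^\star)\le|\Omega^\star|^{1/p}/P(\Omega^\star)^{1/q}$ from Remark \ref{trivial}; the resulting constant $C=z_m^p/(p\,\sigma_{p,q}(\Omega^\star)^{p-1})$ depends only on $p,q,d$ and $P(\Omega^\star)$, as $z_p$ (and hence $z_m$ and $\sigma_{p,q}(\Omega^\star)$) is fully determined by the radius $R^\star=(P(\Omega^\star)/(d\omega_d))^{1/(d-1)}$. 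I expect the main technical hurdle to be the perimeter comparison $P(\Omega_s)\le P((\Omega^\star)_s)$, obtained by chaining Lemma \ref{lemma_derivata_perimetro_1} with the Aleksandrov--Fenchel inequality and an ODE argument, together with the case-splitting required to handle the outer integral over $[0,R^\star]$ when $\rho(\Omega)<R^\star$; once those are in place, the uniform lower bound $H\ge z_m^p$ directly channels the full perimeter defect into the claimed volume deficit $|\Omega^\star|-|\Omega|$.
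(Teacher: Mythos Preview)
Your proof is correct and follows essentially the same scheme as the paper's: build the web function $w_\Omega=G(d(\cdot))$ from the radial minimizer on $\Omega^\star$, compare perimeters of inner parallels via Lemma~\ref{lemma_derivata_perimetro_1} combined with the Aleksandrov--Fenchel inequality and an ODE comparison, and then linearize the $p$th-power estimate. The only differences are presentational: you parametrize by the distance variable $s$ and bundle the gradient and function terms into the single integrand $H(s)=|G'(s)|^p+G(s)^p$, whereas the paper parametrizes by the level value $t$ and treats the two integrals separately (proving $\int_\Omega|\nabla w|^p\le\int_{\Omega^\star}|\nabla z|^p$ and $\int_\Omega w^p\le\int_{\Omega^\star}z^p - z_m^p(|\Omega^\star|-|\Omega|)$ individually via the distribution functions $\mu,\nu$); these are equivalent under the change of variable $t=G(s)$. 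Your linearization via $a^p-b^p\le p\,a^{p-1}(a-b)$ with $a=\sigma_{p,q}(\Omega^\star)$ is a minor variant of the paper's use of Bernoulli's inequality followed by the bound on $\sigma_{p,q}(\Omega)$ from Remark~\ref{trivial}; yours is slightly more direct since $\sigma_{p,q}(\Omega^\star)^{p-1}$ is already a constant of the required type.
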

\begin{proof}
In order to avoid the introduction of too many new symbols, in this proof we repeat some of the notation used in the previous sections. Let us denote by $z$ the minimizer for $\sigma_{p,q}(\Omega^\star)$ with $\|z\|_{L^q(\partial\Omega^\star)}=1$ and let $z_m$ and $z_M$ its infimum and supremum, respectively. Let us build a suitable positive test function $w\in W^{1,p}(\Omega)$ such that
\begin{equation}\label{wp} \int_{\Omega^\star}z^p\:dx\ge\int_{\Omega}w^p\:dx+z_m^p(|B_R|-|\Omega|),
\end{equation}
\begin{equation}\label{dwp} 
\int_{\Omega^\star}|\nabla z|^p\:dx\ge\int_{\Omega}|\nabla w|^p\:dx,
\end{equation}
\begin{equation}\label{wq} 
\int_{\partial \Omega^\star}z^q\:d\mathcal{H}^{d-1}=\int_{\partial\Omega}w^q\:d\mathcal{H}^{d-1}=1.
\end{equation}
As was anticipated, we will make use of the web functions method. Let us set $\ell(t):= \abs{\nabla z}_{z=t}$ for any $z_m <t<z_M$ and observe that $\ell(t)>0$ for any $z_m <t<z_M$ combining the results in Proposition \ref{pro:eigball} and Proposition \ref{pro:radball}. 
We define $w \in W^{1,p}(\Omega)$ in a similar way as done with the function $w_{\Omega_0}$ in Section \ref{sec:hybrid}, by setting $$w(x):= G(d(x)), \, x \in \Omega,$$ where $G:[0,\rho(\Omega)]\to[z_m,z_M]$ is the inverse of the function $$\displaystyle{G^{-1}(t) :=\int_t^{z_M} \frac{1}{\ell(\tau)} \, d\tau}\quad \text{with $z_m<t<z_M$}.$$ 
We point out that $G$ plays the same role as $\tilde{G}$ in Section \ref{sec:hybrid}, and $w(x)=z(y)$ if $d(x,\partial\Omega)=d(y,\partial\Omega^*)$. Moreover, the web function $w$ belongs to $W^{1,p}(\Omega)$, it is positive and the following statements hold:
\begin{equation}
        \begin{gathered}
        w_M:= \sup_\Omega w= z_M,\\
        w_m:=\inf_{\Omega} w \ge z_m,\\
        \abs{\nabla w}_{w=t}= \abs{\nabla z}_{z=t}=\ell(t)>0, \quad w_m<t<w_M.
        \end{gathered}
    \end{equation}
Notice that, as already seen in Section \ref{sec:hybrid}, due to the radiality of $z$, $w$ is equivalently defined via the function $\Psi:[0,R]\to\R$, such that $z(x)=\Psi(|x|)$; in particular $w(x)=\Psi(R-d(x))$ for every $x\in\Omega_0$.
    
To get \eqref{wp}, \eqref{dwp} and \eqref{wq}, we need to handle the sublevel sets of $z$ and $w$. We introduce
    \begin{equation*}
        \begin{aligned}
        \hat{E}_t &:= \{ x \in \Omega \,:\, {w} (x)<t\}, & \hat{B}_t &:= \{ x \in \Omega^\star \,:\, {z} (x)<t\} ,\\
        E_t&:= \{ x \in \Omega \,:\, w(x)>t\}= \Omega \setminus \overline{\hat{E}}_{t}, & B_t &:= \{ x \in \Omega^\star \,:\, z(x)>t\}=\Omega^\star \setminus \overline{\hat{B}}_{t}.
        \end{aligned}
    \end{equation*}
For the function $w=G(d(x))$ formula \eqref{eq_lemma_2_NONINC} holds; combining the latter with the Aleksandrov-Fenchel inequality \eqref{Aleksandrov_Fenchel_W_2} between $P(\cdot)$ and $W_2(\cdot)$ we have
    \[
    \frac{d}{dt}P(\hat{E}_t)\ge d(d-1) \frac{W_2(\hat{E}_t)}{\ell(t)}\ge d(d-1)d^{-\frac{d-2}{d-1}}\omega_d^{\frac{1}{d-1}}\frac{(P(\hat{E}_t))^{\frac{d-2}{d-1}}}{\ell(t)},
    \]
    while for the ball $\hat{B}_t$ equalities hold:
    \[
    \frac{d}{dt}P(\hat{B}_t)= d(d-1) \frac{W_2(\hat{B}_t)}{\ell(t)}= d(d-1)d^{-\frac{d-2}{d-1}}\omega_d^{\frac{1}{n-1}}\frac{(P(\hat{B}_t))^{\frac{d-2}{d-1}}}{\ell(t)}.
    \]
    Now, $P(\hat{E}_{z_M})=P(\Omega)=P(\Omega^\star)=P(B_{z_M})$. Then, by classical comparison theorems for differential inequalities we get
    \begin{equation}\label{compare_tilde}
        P(\hat{E}_t)\le P(\hat{B}_t) 
    \end{equation}
    a.e. in $[z_m,z_M]$. In order to have \eqref{wp}, we set 
    $$\hat{\mu}(t) := \lvert \hat{E}_t \rvert \quad,\quad  \hat{\nu}(t) := \lvert \hat{B}_t \rvert$$
    and
    $$\mu(t):=\abs{E_t}= \abs{\Omega}- \hat{\mu}(t)\quad,\quad \nu(t):=\abs{B_t}=\abs{\Omega^\star}- \hat{\nu}(t).$$
The previous distribution functions are absolutely continuous; in order to obtain a comparison in their intervals of definition, we apply the coarea formula \eqref{coarea} on the sublevel sets $\hat{E}_t$ and $\hat{B}_t$ as follows:
\begin{align*}
\hat{\mu}(t)&=\int_{\hat{E}_t}\frac{|\nabla{w}|}{|\nabla{w}|}\:dx=\int_{z_m}^{t}\left(\int_{{w}=s}\frac{1}{\abs{\nabla {w}}} \, d \mathcal{H}^{d-1}\right)ds,\\
\hat{\nu}(t)&=\int_{\hat{B}_t}\frac{|\nabla{z}|}{|\nabla{z}|}\:dx=\int_{z_m}^{t}\left(\int_{{z}=s}\frac{1}{\abs{\nabla {z}}} \, d \mathcal{H}^{d-1}\right)ds.
\end{align*}
We thus obtain
    \begin{align*}
    \hat{\mu}'(t)=&\int_{{w}=t}\frac{1}{\abs{\nabla {w}}} \, d \mathcal{H}^{d-1}= \frac{P(\hat{E}_t)}{\ell(t)}\le \frac{P(\hat{B}_t)}{\ell(t)}\\
        =&\int_{{z}=t}\frac{1}{\abs{\nabla {z}}} \, d \mathcal{H}^{d-1}=\hat{\nu}'(t)
    \end{align*}
 a.e. in $[z_m,z_M]$. Consequently, inequality $-\mu'(t)\leq -\nu'(t)$ holds a.e. in $[z_m,z_M]$. Integrating from $t$ to $z_M$, since $\nu(z_M)=\mu(z_M)=0$, we have
 $$\mu(t)-\nu(t)=\int_t^{z_M}(\nu'(t)-\mu'(t))\:dt\le 0,$$
 and so we conclude that $\mu(t)\le\nu(t)$ in $[z_m, z_M]$. We point out also that, by construction, $\mu(t)=|\Omega|$ and $\nu(t)=|\Omega^\star|$ for any $t\in[0,z_m]$. This yields the following estimate
    \begin{align*}
        \int_\Omega w^p \, dx=&\int_0^{w_M}p t^{p-1} \mu(t)\, dt =\int_0^{z_M}p t^{p-1} \mu(t)\, dt=\int_{0}^{z_m}p t^{p-1} \mu(t)\, dt+\int_{z_m}^{z_M}p t^{p-1} \mu(t)\, dt \\
&\le\int_{0}^{z_m}p t^{p-1} \mu(t)\, dt+\int_{z_m}^{z_M}p t^{p-1} \nu(t)\, dt=\int_{0}^{z_M}p t^{p-1} \nu(t)\, dt-\int_{0}^{z_m}p t^{p-1} (\nu(t)-\mu(t))\, dt\\
        =&\int_{\Omega^\star}z^p\, dx- z_m^p(\abs{\Omega^\star}-\abs{\Omega}),
    \end{align*}
    i.e. inequality \eqref{wp} is proved. 
    
    Furthermore, \eqref{compare_tilde} entails the following estimate for the integrals of the gradients on the level sets for a.e. $t\in[z_m,z_M]$:
    \begin{equation*}
        \int_{{w}=t} \abs{\nabla {w}}^{p-1} \, d\mathcal{H}^{d-1}=\ell(t)^{p-1}P(\hat{E}_t)\le \ell(t)^{p-1}P(\hat{B}_t)=\int_{{z}=t} \abs{\nabla {z}}^{p-1}\, d\mathcal{H}^{d-1}.
    \end{equation*}
    Using again the coarea formula we obtain \eqref{dwp}:
    \begin{align*}
        \int_\Omega \abs{\nabla w}^p \, dx=& \int_{w_m}^{w_M} \int_{{w}=t} \abs{\nabla {w}}^{p-1}\, d\mathcal{H}^{d-1} \, dt \leq \int_{z_m}^{z_M} \int_{{z}=t} \abs{\nabla{z}}^{p-1}\, d\mathcal{H}^{d-1} \, dt=\int_{\Omega^\star} \abs{\nabla z}^p.
    \end{align*}
    We finally observe that \eqref{wq}  is an immediate consequence of the definition of $w$, which is constant on $\partial \Omega$, getting the same value as $z|_{\partial\Omega^\star}=z_M$; so, in view of the equality $P(\Omega)=P(\Omega^\star)$:
    \begin{equation*}
        \int_{\partial\Omega} w^q \, d\mathcal{H}^{n-1}= w_M^qP(\Omega)=z_M^qP(\Omega^\star)=\int_{\partial\Omega^\star} z^q \, d\mathcal{H}^{n-1}=1.
    \end{equation*}
Plugging \eqref{wp}, \eqref{dwp} and \eqref{wq} in \eqref{pq_min} we obtain
\allowdisplaybreaks
\begin{align*}
\sigma_{p,q}(\Omega^\star)&=\dfrac{\left(\ds\int _{\Omega^\star}|\nabla z|^p\;dx+\ds\int_{\Omega^\star}z^p\;dx\right)^{1/p}}{\left(\ds\int_{\partial \Omega^\star}z^q \;d\mathcal{H}^{d-1}\right)^{1/q}}\ge\dfrac{\left(\ds\int _{\Omega}|\nabla w|^p\;dx+\ds\int_{\Omega}w^p\;dx+z_m^p(|\Omega^\star|-|\Omega|)\right)^{1/p}}{\left(\ds\int_{\partial \Omega}w^q \;d\mathcal{H}^{d-1}\right)^{1/q}}\\
&\ge \left(\sigma^p_{p,q}(\Omega)+z_m^p(|\Omega^\star|-|\Omega|)\right)^{1/p}=\sigma_{p,q}(\Omega)\left(1+\frac{z_m^p(|\Omega^\star|-|\Omega|)}{\sigma^p_{p,q}(\Omega)}\right)^{1/p}\\
&\ge\sigma_{p,q}(\Omega)\left(1+\frac{z_m^p(|\Omega^\star|-|\Omega|)}{p\cdot\sigma^p_{p,q}(\Omega)}\right)=\sigma_{p,q}(\Omega)+\frac{z_m^p(|\Omega^\star|-|\Omega|)}{p\cdot\sigma^{p-1}_{p,q}(\Omega)}\\
&\ge\sigma_{p,q}(\Omega)+\frac{z_m^p P(\Omega)^{\frac{p-1}{q}}}{p|\Omega|^{\frac{p-1}{p}}}(|\Omega^\star|-|\Omega|)\ge\sigma_{p,q}(\Omega)+\frac{z_m^p P(\Omega^\star)^{\frac{p-1}{q}}}{p|\Omega^\star|^{\frac{p-1}{p}}}(|\Omega^\star|-|\Omega|)\\
&=\sigma_{p,q}(\Omega)+\frac{z_m^p}{p}\left(\frac{ P(\Omega^\star)^{\frac{1}{q}-\frac{d}{p(d-1)}}}{d^{\frac{d}{d-1}}\omega_d^{\frac{1}{d-1}}}\right)^{p-1}(|\Omega^\star|-|\Omega|),
\end{align*}
where the inequality between the second and the third line is due to Bernoulli's inequality.
\end{proof}

\begin{remark}
For the nonlinear eigenvalue $\sigma_p^p(\cdot)$, Proposition \ref{pro:gferone} reduces to
    $$\sigma^p_{p}(\Omega^\star)\ge\sigma^p_{p}(\Omega)+z_m^p(|\Omega^\star|-|\Omega|).$$
\end{remark}

Now we are in position to prove both Main Theorem 1 and Main Theorem 2 stated in the introduction. 

\begin{proof}[Proof of Main Theorem 1]
It is a straightforward consequence of Proposition \ref{pro:gferone}, since $|\Omega^\star|\ge|\Omega|$ and all the equalities in the proof occur if and only if $\Omega=\Omega^\star$.
\end{proof}

\begin{proof}[Proof of Main Theorem 2]
It is a straightforward consequence of Proposition \ref{pro:gferone} and Lemma \ref{lemma_fugl_mod}.
\end{proof}

\begin{remark}
It is interesting to show that the stability result, as stated in terms of the Hausdorff asymmetry, does not hold if $\Omega$ is not nearly spherical. In other words, the gap $\sigma_{p,q}(\Omega^\star)-\sigma_{p,q}(\Omega)$ cannot be controlled from below by an arbitrary large Hausdorff asymmetry. We show this fact with an explicit example. Let us consider the case $d=4$, $p=q=2$. Let us assume that we can estimate with the function $g$ the gap $\sigma_{2,2}(\Omega^\star)-\sigma_{2,2}(\Omega)$ for any convex set in $\R^4$ regardless of its asymmetry, namely that one has
\begin{equation}\label{eq:contro4}
\sigma_{2,2}(\Omega^\star)-\sigma_{2,2}(\Omega)\ge C g\left(\mathcal{A}^\star_{\mathcal{H}}(\Omega)\right)=C \left(\mathcal{A}^\star_{\mathcal{H}}(\Omega)\right)^\frac52    
\end{equation}
(where $C>0$ only depends on $P(\Omega)=P(\Omega^\star)$) even for a set $\Omega$ with large Hausdorff asymmetry. Let us consider, for any $\varepsilon\in(0,1)$, the thinning cylinders
$$\Omega_\varepsilon=D_\varepsilon\times\left[-\frac{2\pi}{\varepsilon^2}+\frac{\varepsilon}{3},\frac{2\pi}{\varepsilon^2}-\frac{\varepsilon}{3}\right],$$
where $D_\varepsilon$ is the $3$-dimensional ball in $\{x_4=0\}$ with radius $\varepsilon$.
It holds
$$P(\Omega_\varepsilon)=4\pi\varepsilon^2\left(\frac{4\pi}{\varepsilon^2}-\frac{2\varepsilon}{3}\right)+2\cdot\frac{4}{3}\pi \varepsilon^3=16\pi^2,\quad |\Omega_\varepsilon|=\frac{4}{3}\pi \varepsilon^3 \left(\frac{4\pi}{\varepsilon^2}-\frac{2\varepsilon}{3}\right)=\frac{16}{3}\pi^2 \varepsilon-\frac{8}{9}\pi \varepsilon^4\to 0$$
as $\varepsilon\to 0$. So, $\Omega_\varepsilon^\star=B_2$ (since in $\R^4$ $P(B_R)=2\pi^2 R^3$) and we have
$$\mathcal{A}^\star_{\mathcal{H}}(\Omega_\varepsilon)\simeq \frac{2\pi}{\varepsilon^2}-2.$$
On the other hand, in view of Remark \ref{trivial}, it holds
$$\sigma_{2,2}(\Omega_\varepsilon)\le\frac{|\Omega_\varepsilon|^{1/2}}{P(\Omega_\varepsilon)^{1/2}}\to 0.$$
By \eqref{eq:contro4} we have
$$\sigma_{2,2}(B_2)-\sigma_{2,2}(\Omega_\varepsilon)\ge C\mathcal{A}^\star_{\mathcal{H}}(\Omega_\varepsilon)^\frac52\simeq \frac{C'}{\varepsilon^5},$$
thus, letting $\varepsilon\to0$, we get a contradiction.
\end{remark}

\section{Proof of the main results: domains with holes}\label{main_sec_buco}
In this section, we give the proof of the results for domains with holes. For the comfort of the reader, we will divide the proofs mainly in three parts: we first prove the optimality of $A_{R_1,R_2}$; then we provide quantitative enhancements of inequality \eqref{bucostab_ineq_intro1} in terms of the inner and of the outer asymmetry and finally we achieve the stability result.

For the maximality of the spherical shell (Main Theorem 3) we adapt the proof of Proposition \ref{pro:gferone}. We point out that the most relevant difference in this case is due to the cancellation of the term of deficit of measure, due the volume constraint $|\Omega_0\setminus\overline{\Theta}|=|A_{R_1,R_2}|$. 

\begin{proof}[Proof of Main Theorem 3] We follow the same scheme as in Proposition \ref{pro:gferone}. So, we omit the parts that are repeated verbatim. 

Let us consider the function $w_{\Omega_0}$ introduced in Section \ref{sec:hybrid} and recall that $w_{\Omega_0}\in W^{1,p}(\Omega_0)$, $w_{\Omega_0}(x)=z_m$ if $d(x)\ge R_2-R_1$  and
\begin{gather*}
|\nabla w_{\Omega_0}|_{w_{\Omega_0}=t}=|\nabla z|_{z=t},\\
(w_{\Omega_0})_m:=\min_{\Omega_0} w_{\Omega_0}\geq z_m,\\
(w_{\Omega_0})_M:=\max_{\Omega_0} w_{\Omega_0} = z_M=\tilde{G}(0).
\end{gather*}

We repeat verbatim the same arguments as in the proof of Proposition \ref{pro:gferone}, applying the comparison arguments based on \eqref{eq_lemma_2_NONINC} to the sublevel sets 
\begin{equation*}
   \Omega_{0,t}=\{x\in \Omega_0\; | \; w_{\Omega_0}(x)<t  \}, \quad A_{0,t}=\{x\in B_{R_2}\;|\;\overline{z}(x)<t\}
\end{equation*}
 defined in \eqref{set_def}. We obtain the comparison
$$P(A_{0,t})\ge P(\Omega_{0,t})$$
for a.e. $t\in]z_m,z_M[$. Notice that the argument works since both $w_{\Omega_0}$ and $\overline{z}$ are decreasing with respect to the distance from the outer boundaries ($\partial\Omega_0$ and $\partial B_{R_2}$ respectively) and the balls $A_{0,t}$ attain the equality in \eqref{eq_lemma_2_NONINC}. We thus proceed as in Proposition \ref{pro:gferone} and obtain the following estimates between the integrals involving $\overline{z}$ and $w_{\Omega_0}$ on $B_{R_2}$ and $\Omega_0$, respectively: 
\begin{equation}\label{zp}
\int_{B_{R_2}}\overline{z}^p\:dx\ge\int_{\Omega_0}w_{\Omega_0}^p\:dx+z_m^p(|B_{R_2}|-|\Omega_0|)
\end{equation}
\begin{equation}\label{dzp}
\int_{B_{R_2}}|\nabla\overline{z}|^p\:dx\ge\int_{\Omega_0}|\nabla w_{\Omega_0}|^p\:dx
\end{equation}
\begin{equation}\label{zq}
    \int_{\partial B_{R_2}}z^q\:d\mathcal{H}^{d-1}=\int_{\partial B_{R_2}}\overline{z}^q\:d\mathcal{H}^{d-1}=\int_{\partial\Omega_0}w_{\Omega_0}^q\:d\mathcal{H}^{d-1}
\end{equation}
Our aim is to get the following, similar, estimates for the volume integrals on the domains with holes (equality \eqref{zq} for the boundary terms can be directly plugged in the variational characterization of $\tilde{\sigma}_{p,q}$):
\begin{equation}\label{zpbuco}
\int_{A_{R_1,R_2}}z^p\:dx\ge\int_{\Omega}w_{\Omega_0}^p\:dx
\end{equation}
\begin{equation}\label{dzpbuco}
\int_{A_{R_1,R_2}}|\nabla z|^p\:dx\ge\int_{\Omega}|\nabla w_{\Omega_0}|^p\:dx
\end{equation}
    
To obtain \eqref{zpbuco}, we start splitting the integrals in \eqref{zp} separating the plain parts (respectively $\Omega_0\setminus\overline{\Theta}$ and $A_{R_1,R_2}$) and the holes (respectively $\Theta$ and $B_{R_1}$):
$$
\int_{A_{R_1,R_2}}\overline{z}^p\, dx+\int_{B_{R_1}}\overline{z}^p\, dx\ge\int_{\Omega_0\setminus\overline{\Theta}} w_{\Omega_0}^p\,dx+\int_{\Theta} w_{\Omega_0}^p\,dx+ z_m^p(\abs{B_{R_2}}-\abs{\Omega_{0}}).
$$
We recall that $\overline{z}=z$ on $A_{R_1,R_2}$, $\overline{z}=z_m$ in $B_{R_1}$ and $w_{\Omega_0}\ge z_m$ in $\Omega_0$ (so, in particular, on $\Theta$). We thus conclude
\begin{align*}
\int_{A_{R_1,R_2}}z^p\, dx&\ge\int_{\Omega_0\setminus\overline{\Theta}} w_{\Omega_0}^p\,dx+\int_{\Theta} w_{\Omega_0}^p\,dx-z_m^p\abs{\Omega_{0}}+z_m^p\abs{B_{R_2}}-z_m^p\abs{B_{R_1}}\\
&\ge \int_{\Omega_0\setminus\overline{\Theta}} w_{\Omega_0}^p\,dx+z_m^p\left(|B_{R_2}|-|B_{R_1}|-|\Omega_0|+|\Theta|\right)\\
&=\int_{\Omega_0\setminus\overline{\Theta}} w_{\Omega_0}^p\,dx+z_m^p\left(|A_{R_1,R_2}|-|\Omega_0\setminus\overline{\Theta}|\right)=\int_{\Omega_0\setminus\overline{\Theta}} w_{\Omega_0}^p\,dx,
\end{align*}
i.e. we have \eqref{zpbuco}. Moreover, \eqref{dzp} easily implies \eqref{dzpbuco}:
    \begin{align*}
  \int_{\Omega_0\setminus\overline{\Theta}} \abs{\nabla w_{\Omega_0} }^p \, dx  &\le\int_{\Omega_0} \abs{\nabla w_{\Omega_0} }^p \, dx\le\int_{B_{R_2}} \abs{\nabla \overline{z}}^p\,dx=\int_{A_{R_1,R_2}} \abs{\nabla z}^p\,dx.
    \end{align*}
We achieve the thesis plugging \eqref{zq}, \eqref{zpbuco} and \eqref{dzpbuco} in \eqref{pq_min}:
\begin{align*}
\tilde{\sigma}_{p,q}(A_{R_1,R_2})&=\dfrac{\left(\ds\int _{A_{R_1,R_2}}|\nabla z|^p\;dx+\ds\int_{A_{R_1,R_2}}z^p\;dx\right)^{1/p}}{\left(\ds\int_{\partial B_{R_2}}z^q \;d\mathcal{H}^{d-1}\right)^{1/q}}\ge\dfrac{\left(\ds\int _{\Omega_0\setminus\overline{\Theta}}|\nabla w|^p\;dx+\ds\int_{\Omega_0\setminus\overline{\Theta}}w^p\;dx\right)^{1/p}}{\left(\ds\int_{\partial \Omega_0}w^q \;d\mathcal{H}^{d-1}\right)^{1/q}}\\
&\ge\tilde{\sigma}_{p,q}(\Omega_0\setminus\overline{\Theta})
\end{align*}
We conclude observing that all the previous inequalities become equalities if and only if $\Omega_0\setminus\overline{\Theta}=A_{R_1,R_2}$.
\end{proof}

A crucial difference with the convex case in Proposition \ref{pro:gferone} is the following. Proposition \ref{pro:gferone} actually gives a quantitative result in view of the nonnegative term $|\Omega^\star|-|\Omega|$ that vanishes if and only if $\Omega$ is a ball itself. If $\Omega$ is nearly spherical, $|\Omega^\star|-|\Omega|$ can be estimated from below with the Hausdorff asymmetry in view of Lemma \ref{lemma_fugl_mod}. Instead, in the proof of Main Theorem 3, the volume constraint $|\Omega_0\setminus\overline{\Theta}|=|A_{R_1,R_2}|$ entails a cancellation of the additional volume term passing from \eqref{zp} to \eqref{zpbuco}. 

As highlighted in Section \ref{sec:hybrid}, in the linear case $p=q=2$, we overcome the problem of estimating the asymmetry of $\Omega_0$ using a Fuglede approach. To do this, in \cite{cpp}, authors adapted to the framework of domains with holes the argument of \cite{cito2021quantitative}, originally written in the framework of convex sets. A quantitative estimate of $\tilde{\sigma}_2^2(A_{R_1,R_2})-\tilde{\sigma}_2^2(\Omega_0\setminus\overline{\Theta})$ is then obtained in terms of the Hausdorff asymmetry of $\Omega_0$, provided that it is nearly spherical. In other words, if $\Omega_0$ is nearly spherical, for the linear eigenvalue $\tilde{\sigma}_2^2$ one has 
$$\tilde{\sigma}_2^2(A_{R_1,R_2})-\tilde{\sigma}_2^2(\Omega_0\setminus\overline{\Theta})\ge Cg(\mathcal{A}^\star_\mathcal{H}(\Omega_0))$$
for some constant depending only on $d,R_1,R_2$. 

In the nonlinear case, the idea is to use a propagation of the asymmetry argument; anyway, there are some relevant differences with respect to the classical cases. Firstly, here we deal with a maximization problem, not a minimization as in the Dirichlet case studied, for instance, in the classical references \cite{bdph,hansen1994isoperimetric}. In addition, we need to compare the perimeters of two sublevel sets  which do not match either a perimeter or a volume constraint; they are only linked by the "dearrangement" procedure and by the corresponding level $t$. In this sense, the result of Lemma \ref{pro:pro} will be very useful.

\begin{proposition}[outer asymmetry]\label{pro:buco2}
Let $0<R_1<R_2<+\infty$. Then, for every admissible set $\Omega_0\setminus\overline{\Theta}\subset\R^d$ such that $P(\Omega_0)=P(B_{R_2})$ and $|\Omega_0\setminus\overline{\Theta}|=|A_{R_1,R_2}|$,
it holds
\begin{equation*}
\tilde{\sigma}_{p,q}(A_{R_1,R_2})-\tilde{\sigma}_{p,q}(\Omega_0\setminus\overline{\Theta})\ge C(d,p,q,R_1,R_2)\tilde{\alpha}_{out}(\Omega_0).
\end{equation*} 
\end{proposition}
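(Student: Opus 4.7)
The plan is to redo the dearrangement argument of Main Theorem 3 with $w_{\Omega_0}$ as test function, but this time keeping careful track of the positive residual terms that were previously discarded, and then using Proposition \ref{pro:pro} to estimate them from below in terms of $d_{\mathcal H}(\Omega_0,B_{\Omega_0})^{\frac{d+3}{2(d-1)}}$. Let $z$ be the positive normalized minimizer for $\tilde\sigma_{p,q}(A_{R_1,R_2})$ and set, as in the previous proofs, $\hat\mu(t)=|\Omega_{0,t}|$, $\hat\nu(t)=|A_{0,t}|$, $\mu(t)=|\Omega_0|-\hat\mu(t)$, $\nu(t)=|B_{R_2}|-\hat\nu(t)$. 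Repeating the coarea computations of Proposition \ref{pro:gferone} and the same cancellation of the measure terms used in the proof of Main Theorem 3 (which exploits $w_{\Omega_0}\ge z_m$ on $\Theta$ together with the constraint $|\Omega_0\setminus\overline\Theta|=|A_{R_1,R_2}|$), one obtains
\begin{equation*}
\int_{A_{R_1,R_2}}(|\nabla z|^p+z^p)\,dx-\int_{\Omega_0\setminus\overline\Theta}(|\nabla w_{\Omega_0}|^p+w_{\Omega_0}^p)\,dx\ge R_G+R_V,
\end{equation*}
where, by Fubini,
\begin{equation*}
R_G=\int_{z_m}^{z_M}\ell(t)^{p-1}\bigl[P(A_{0,t})-P(\Omega_{0,t})\bigr]\,dt,\qquad R_V=\int_{z_m}^{z_M}\frac{P(A_{0,s})-P(\Omega_{0,s})}{\ell(s)}(s^p-z_m^p)\,ds.
\end{equation*}

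To estimate these integrals, I would restrict the integration to $[T(\Omega_0),z_M]$ and apply the quantitative perimeter comparison \eqref{eq:quantinner}. On this interval the definition of $T(\Omega_0)$ guarantees that $\ell(t)$ is comparable to $\ell(z_M)$ (i.e.\ pinched between $\ell(z_M)/2$ and $2\ell(z_M)$), so $\ell(t)^{p-1}$ and $1/\ell(s)$ are both bounded by constants depending only on $p$ and $\ell(z_M)=\ell(z_M;R_1,R_2)$, and moreover $\tilde G^{-1}(t)=\int_t^{z_M}d\tau/\ell(\tau)\ge (z_M-t)/(2\ell(z_M))$. Combining these ingredients and using that $s^p-z_m^p$ is bounded below on $[T(\Omega_0),z_M]$ by the positive constant $T(\Omega_0)^p-z_m^p$ (which, by Bernoulli, is itself $\ge Cz_m^{p-1}(T(\Omega_0)-z_m)$ and ultimately bounded below by a strictly positive constant depending only on $R_1,R_2,p$), one gets
\begin{equation*}
R_G+R_V\ge C(d,p,R_1,R_2)\,d_{\mathcal H}(\Omega_0,B_{\Omega_0})^{\frac{d+3}{2(d-1)}}\int_{T(\Omega_0)}^{z_M}(z_M-t)\,dt=C'\,(z_M-T(\Omega_0))^{2}\,d_{\mathcal H}(\Omega_0,B_{\Omega_0})^{\frac{d+3}{2(d-1)}}.
\end{equation*}
Since $z_M-T(\Omega_0)\le z_M-z_m$ is uniformly bounded in terms of $R_1,R_2$, one has $(z_M-T(\Omega_0))^2\ge C(R_1,R_2)(z_M-T(\Omega_0))^3$, so the residual is bounded below exactly by $C\,\tilde\alpha_{out}(\Omega_0)$.

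It remains to translate the $p$-th-power estimate into an estimate for $\tilde\sigma_{p,q}$ itself. Since $w_{\Omega_0}$ is a valid competitor with $\|w_{\Omega_0}\|_{L^q(\partial\Omega_0)}=\|z\|_{L^q(\partial B_{R_2})}=1$, the above gives $\tilde\sigma_{p,q}(A_{R_1,R_2})^p\ge \tilde\sigma_{p,q}(\Omega_0\setminus\overline\Theta)^p+C\,\tilde\alpha_{out}(\Omega_0)$. Applying Bernoulli's inequality $(1+x)^{1/p}\ge 1+x/p$ exactly as in the last chain of inequalities of Proposition \ref{pro:gferone}, and using the uniform upper bound $\tilde\sigma_{p,q}(\Omega_0\setminus\overline\Theta)\le |A_{R_1,R_2}|^{1/p}/P(B_{R_2})^{1/q}$ from Remark \ref{trivial} to control the denominator $\tilde\sigma_{p,q}(\Omega_0\setminus\overline\Theta)^{p-1}$, one concludes $\tilde\sigma_{p,q}(A_{R_1,R_2})-\tilde\sigma_{p,q}(\Omega_0\setminus\overline\Theta)\ge C(d,p,q,R_1,R_2)\,\tilde\alpha_{out}(\Omega_0)$.

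The main obstacle is the combinatorial bookkeeping of the positive residuals: one has to verify that the cancellation of the measure term $z_m^p(|B_{R_2}|-|\Omega_0|+|\Theta|-|B_{R_1}|)$ still occurs even after we keep, rather than discard, the distributional differences $\nu(t)-\mu(t)$ and $P(A_{0,t})-P(\Omega_{0,t})$. All the uniform constants (lower bounds on $\ell(z_M)$, $z_m$, and $T(\Omega_0)^p-z_m^p$) depend only on $R_1,R_2,p$ through the radial minimizer on $A_{R_1,R_2}$ guaranteed by Proposition \ref{pro:radbuco}, so the constant $C(d,p,q,R_1,R_2)$ is admissible.
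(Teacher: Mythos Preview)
Your argument is correct and follows the same overall strategy as the paper (test with $w_{\Omega_0}$, invoke Proposition~\ref{pro:pro}, then Bernoulli plus Remark~\ref{trivial}), but the bookkeeping of the residual term is organized differently. The paper works only with the volume residual: it first integrates the enhanced perimeter inequality \eqref{eq:quantinner} at the level of the distribution functions, obtaining $\nu(t)-\mu(t)\ge (|B_{R_2}|-|\Omega_0|)+C\,d_{\mathcal H}^{\frac{d+3}{2(d-1)}}(\Omega_0,B_{\Omega_0})\,(\tilde G^{-1}(t))^2/2$ on $[T(\Omega_0),z_M]$, and then integrates a second time against $pt^{p-1}$. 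Because $(\tilde G^{-1}(t))^2\sim(z_M-t)^2$ on that interval, the double integration yields $(z_M-T(\Omega_0))^3$ directly, matching the definition of $\tilde\alpha_{out}(\Omega_0)$ without any further manipulation. You instead substitute \eqref{eq:quantinner} directly into the single integrals $R_G,R_V$, which only produces $(z_M-T(\Omega_0))^2$; you then recover the cube via the uniform bound $z_M-T(\Omega_0)\le z_M-z_m$. Both routes work; the paper's is a bit more economical since it never needs the gradient residual $R_G$ and avoids the extra bootstrap.

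One point in your write-up deserves a word of caution: the claim that $T(\Omega_0)^p-z_m^p$ is ``ultimately bounded below by a strictly positive constant depending only on $R_1,R_2,p$'' is true but not self-evident, since $T(\Omega_0)$ is defined in terms of $\Omega_0$. The reason it holds is that the definition of $T_1(\Omega_0)$ in Proposition~\ref{pro:pro} includes the condition $\ell(s)\ge \ell(z_M)/2$, while the Neumann condition on $\partial B_{R_1}$ forces $\ell(z_m)=0$; continuity of $\ell$ then gives a lower bound on $T_1(\Omega_0)-z_m$ depending only on the radial minimizer $z$. You should make this explicit. Alternatively, since $R_G$ alone already yields the needed $(z_M-T(\Omega_0))^2$ estimate and $R_V\ge 0$, you can drop the $R_V$ contribution entirely and sidestep the issue.
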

\begin{proof}
The proof is based on a refinement of the comparison argument of Main Theorem 3, plugging \eqref{eq:quantinner} instead of the simple inequality between perimeters. We set
$$\mu(t):=|\Omega_{0,t}|,\ \nu(t):=|A_{0,t}|.$$
It holds
 \begin{align*}
       -\nu'(t)=\frac{P(A_{0,t})}{\ell(t)}\ge \frac{P(\Omega_{0,t})}{\ell(t)}=-\mu'(t)
    \end{align*}
a.e. in $[z_m, z_M]$ and, in particular, when \eqref{eq:quantinner} holds
\begin{align*}
-\nu'(t)&=\frac{P(A_{0,t})}{\ell(t)}\ge \frac{P(\Omega_{0,t})}{\ell(t)}+C(d) d_\mathcal{H}^{\frac{d+3}{2(d-1)}}(\Omega_0;B_{\Omega_0})\frac{\tilde{G}^{-1}(t)}{\ell(t)}\\
&=-\mu'(t)+C(d) d_\mathcal{H}^{\frac{d+3}{2(d-1)}}(\Omega_0;B_{\Omega_0})\frac{\tilde{G}^{-1}(t)}{\ell(t)}
\end{align*}
a.e. in $[T(\Omega_0), z_M]$. Notice that, in view of its definition, it holds
$$\frac{\tilde{G}^{-1}(t)}{\ell(t)}=-\frac{d}{dt}\left[\frac{(\tilde{G}^{-1}(t))^2}{2}\right].$$
Integrating, we get $\nu(t)\ge\mu(t)+|\Omega_0|-|B_{R_2}|$ for any $z_m \leq t < z_M$. In particular, for any $t\in[T(\Omega_0),z_M]$:
\begin{equation*}
\begin{split}
\nu(t)-|B_{R_2}|&=-\int_t^{z_M}\nu'(s)\:ds\\
&=-\int_t^{z_M}\mu'(s)\:ds-C(d) d_\mathcal{H}^{\frac{d+3}{2(d-1)}}(\Omega_0;B_{\Omega_0})\int_t^{z_M}\frac{d}{ds}\left[\frac{(\tilde{G}^{-1}(s))^2}{2}\right]\:ds\\
&=\mu(t)-|\Omega_0|+C(d) d_\mathcal{H}^{\frac{d+3}{2(d-1)}}(\Omega_0;B_{\Omega_0})\left[\frac{(\tilde{G}^{-1}(t))^2}{2}\right].
\end{split}
\end{equation*}
In other words, for any $t\in[z_m,z_M]$:
\begin{equation}\label{eq:bucout1}
 \nu(t)\ge\mu(t)+|B_{R_2}|-|\Omega_0|+C(d) d_\mathcal{H}^{\frac{d+3}{2(d-1)}}(\Omega_0;B_{\Omega_0})\left[\frac{(\tilde{G}^{-1}(t))^2}{2}\right]\chi_{[T(\Omega_0),z_M]}(t).
\end{equation}
Integrating \eqref{eq:bucout1} one has
\begin{equation}\label{eq:bucout2}
\int_{B_{R_2}}\overline{z}^p\:dx\ge\int_{\Omega_0}w^p_{\Omega_0}\:dx+z_m^p(|B_{R_2}|-|\Omega_0|)+C(d) d_\mathcal{H}^{\frac{d+3}{2(d-1)}}(\Omega_0;B_{\Omega_0})\int_{T(\Omega_0)}^{z_M}{(\tilde{G}^{-1}(t))^2}\:dt.
\end{equation}
We conclude estimating last integral. By definition of $T(\Omega_0)$ it holds
\begin{align*}
\int_{T(\Omega_0)}^{z_M}{(\tilde{G}^{-1}(t))^2}\:dt&=\int_{T(\Omega_0)}^{z_M}\left(\int_t^{z_M}\frac{1}{\ell(s)}\:ds\right)^2\:dt\ge\frac{1}{4}\int_{T(\Omega_0)}^{z_M}\left(\int_t^{z_M}\frac{1}{\ell(z_M)}\:ds\right)^2\:dt\\
&=\frac{1}{4\ell(z_M)}\int_{T(\Omega_0)}^{z_M}\left(z_M-t\right)^2\:dt=\frac{(z_M-T(\Omega_0))^3}{12\ell(z_M)}.
\end{align*}
We thus use the latter estimate in \eqref{eq:bucout2} and split the plain part and the hole, obtaining
\begin{equation}\label{eq:bucout3}
\int_{A_{R_1,R_2}}z^p\:dx\ge\int_{\Omega}w_{\Omega_0}^p\:dx+\frac{1}{12\ell(z_M)}\tilde{\alpha}_{out}(\Omega_0),
\end{equation}
i.e. an enhanced version of \eqref{zpbuco}. Plugging \eqref{eq:bucout3} in the variational characterization of $\tilde{\sigma}_{p,q}$ and applying the Bernoulli's inequality we get:
\begin{align*}
\tilde{\sigma}_{p,q}(A_{R_1,R_2}) &=\left(\int_{A_{R_1,R_2}}|\nabla z|^p\:dx+\int_{A_{R_1,R_2}}z^p\:dx\right)^{1/p}\\
&\ge\left(\int_{\Omega_0\setminus \overline{\Theta}}|\nabla w_{\Omega_0}|^p\:dx+\int_{\Omega_0\setminus \overline{\Theta}}w^p_{\Omega_0}\:dx+\frac{1}{12\ell(z_M)}\tilde{\alpha}_{out}(\Omega_0)\right)^{1/p}\\
&\ge\left(\tilde{\sigma}^p_{p,q}(\Omega_0\setminus\overline{\Theta})+\frac{1}{12\ell(z_M)}\tilde{\alpha}_{out}(\Omega_0)\right)^{1/p}=\tilde{\sigma}_{p,q}(\Omega_0\setminus\overline{\Theta})\left(1+\frac{\tilde{\alpha}_{out}(\Omega_0)}{12\ell(z_M)\tilde{\sigma}^p_{p,q}(\Omega_0\setminus\overline{\Theta})}\right)^{1/p}\\
&\ge\tilde{\sigma}_{p,q}(\Omega_0\setminus\overline{\Theta})\left(1+\frac{\tilde{\alpha}_{out}(\Omega_0)}{12\ell(z_M)p\cdot\tilde{\sigma}^p_{p,q}(\Omega_0\setminus\overline{\Theta})}\right)\\
&=\tilde{\sigma}_{p,q}(\Omega_0\setminus\overline{\Theta})+\frac{\tilde{\alpha}_{out}(\Omega_0)}{12\ell(z_M)p\cdot\tilde{\sigma}^{p-1}_{p,q}(\Omega_0\setminus\overline{\Theta})}\\
&\ge\tilde{\sigma}_{p,q}(\Omega_0\setminus\overline{\Theta})+\frac{P(\Omega_0)^{\frac{p-1}{q}}}{12\ell(z_M)p|\Omega_0\setminus\overline{\Theta}|^{\frac{p-1}{p}}}\tilde{\alpha}_{out}(\Omega_0)\\
&=\tilde{\sigma}_{p,q}(\Omega_0\setminus\overline{\Theta})+\frac{ P(B_{R_2})^{\frac{p-1}{q}}}{12\ell(z_M)p|A_{R_1,R_2}|^{\frac{p-1}{p}}}\tilde{\alpha}_{out}(\Omega_0).
\end{align*}
Since, by \eqref{pqbuco_prob}, $\ell(z_M)^{p-1}=\tilde{\sigma}_{p,q}(A_{R_1,R_2})z_M^{q-1}$, the thesis is achieved.
\end{proof}

As highlighted in the introduction, the hole's deviation is estimated in terms of the deviation of $\Theta$ from the configuration where $\Omega_0\setminus\overline{\Theta}$ has the structure of a shell.

\begin{proposition}[inner asymmetry]
\label{teo:buco3}
Let $0<R_1<R_2<+\infty$. Then, for every $\Omega_0\setminus\overline{\Theta}\subset\R^d$ with $\Omega_0$ convex and $\Theta$ an admissible hole such that $\overline{\Theta}\subset\Omega_0$, $P(\Omega_0)=P(B_{R_2})$ and $|\Omega|=|A_{R_1,R_2}|$,
it holds
\begin{equation*}
\tilde{\sigma}_{p,q}(A_{R_1,R_2})-\tilde{\sigma}_{p,q}(\Omega_0\setminus\overline{\Theta})\ge C(d,p,q,R_1,R_2)\tilde{\mathcal{A}}(\Theta;\Omega_0),
\end{equation*}
where $\tilde{\mathcal{A}}(\Theta;\Omega_0)$ is defined in \eqref{eq:atilde}.
\end{proposition}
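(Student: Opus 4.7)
My plan is to mirror the comparison scheme of Main Theorem 3, but this time to carefully keep track of the integral contributions coming from the hole $\Theta$ that were discarded in that proof. Concretely, let $z$ be the positive normalized minimizer for $\tilde{\sigma}_{p,q}(A_{R_1,R_2})$ and let $w_{\Omega_0}$ be the web-function of Section \ref{sec:hybrid}. I repeat verbatim the derivation of \eqref{zp}--\eqref{zq} on the outer containers $B_{R_2}$ and $\Omega_0$, but then instead of discarding the contributions on $B_{R_1}$ and $\Theta$, I split $\int_{B_{R_2}}=\int_{A_{R_1,R_2}}+\int_{B_{R_1}}$ and $\int_{\Omega_0}=\int_{\Omega_0\setminus\overline{\Theta}}+\int_{\Theta}$. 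Since $\overline{z}=z$ on $A_{R_1,R_2}$ and $\overline{z}=z_m$, $\nabla\overline{z}=0$ on $B_{R_1}$, this yields the two bounds
\begin{equation*}
\int_{A_{R_1,R_2}}|\nabla z|^p\,dx\ \ge\ \int_{\Omega_0\setminus\overline{\Theta}}|\nabla w_{\Omega_0}|^p\,dx+\int_{\Theta}|\nabla w_{\Omega_0}|^p\,dx
\end{equation*}
and
\begin{equation*}
\int_{A_{R_1,R_2}}z^p\,dx\ \ge\ \int_{\Omega_0\setminus\overline{\Theta}}w_{\Omega_0}^p\,dx+\int_{\Theta}(w_{\Omega_0}^p-z_m^p)\,dx+z_m^p\bigl(|A_{R_1,R_2}|-|\Omega_0\setminus\overline{\Theta}|\bigr),
\end{equation*}
where I use that $w_{\Omega_0}\ge z_m$ on $\Omega_0$.

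The decisive simplification comes from the volume constraint $|\Omega_0\setminus\overline{\Theta}|=|A_{R_1,R_2}|$, which kills the last summand above (the cancellation already observed in the proof of Main Theorem 3). Adding the two inequalities, I obtain
\begin{equation*}
\int_{A_{R_1,R_2}}\!\!(|\nabla z|^p+z^p)\,dx\ \ge\ \int_{\Omega_0\setminus\overline{\Theta}}\!\!(|\nabla w_{\Omega_0}|^p+w_{\Omega_0}^p)\,dx+\int_{\Theta}(|\nabla w_{\Omega_0}|^p+w_{\Omega_0}^p-z_m^p)\,dx.
\end{equation*}
Since $w_{\Omega_0}\ge z_m$ pointwise on $\Omega_0$, the integrand $|\nabla w_{\Omega_0}|^p+w_{\Omega_0}^p-z_m^p$ is nonnegative everywhere, and it vanishes identically on $K_{\Omega_0}\setminus\Theta$ (as already observed just before Section \ref{main_sec_buco}). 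Hence restricting the $\Theta$-integral to $\Theta\setminus\overline{K_{\Omega_0}}$ only \emph{decreases} it, and I recognize $\tilde{\mathcal{A}}(\Theta;\Omega_0)$ as a lower bound for that term.

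Combining the previous displayed inequality with the boundary identity \eqref{zq} and the variational characterization of $\tilde{\sigma}_{p,q}(\Omega_0\setminus\overline{\Theta})$ (with $w_{\Omega_0}$ as a test function), I get
\begin{equation*}
\tilde{\sigma}^p_{p,q}(A_{R_1,R_2})\ \ge\ \tilde{\sigma}^p_{p,q}(\Omega_0\setminus\overline{\Theta})+\frac{\tilde{\mathcal{A}}(\Theta;\Omega_0)}{\left(\int_{\partial\Omega_0}w_{\Omega_0}^q\,d\mathcal{H}^{d-1}\right)^{p/q}}.
\end{equation*}
Applying Bernoulli's inequality exactly as at the end of Proposition \ref{pro:gferone} and Proposition \ref{pro:buco2}, I convert this into the desired linear estimate
\begin{equation*}
\tilde{\sigma}_{p,q}(A_{R_1,R_2})-\tilde{\sigma}_{p,q}(\Omega_0\setminus\overline{\Theta})\ \ge\ \frac{\tilde{\mathcal{A}}(\Theta;\Omega_0)}{p\,\tilde{\sigma}^{p-1}_{p,q}(\Omega_0\setminus\overline{\Theta})\left(\int_{\partial\Omega_0}w_{\Omega_0}^q\,d\mathcal{H}^{d-1}\right)^{p/q}}.
\end{equation*}
The trace integral equals $z_M^q P(B_{R_2})$ because $w_{\Omega_0}\equiv z_M$ on $\partial\Omega_0$ and $P(\Omega_0)=P(B_{R_2})$, while $\tilde{\sigma}_{p,q}(\Omega_0\setminus\overline{\Theta})$ is bounded above by $\tilde{\sigma}_{p,q}(A_{R_1,R_2})$ in view of Main Theorem 3 and from below by Remark \ref{trivial}. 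Hence the prefactor is a positive constant $C(d,p,q,R_1,R_2)$, and the proof is complete.

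The only subtle point is the legitimacy of dropping the $\Theta$-integral down to $\Theta\setminus\overline{K_{\Omega_0}}$: this is where the specific design of the hybrid asymmetry—using $K_{\Omega_0}$ rather than a ball of volume $|\Theta|$—pays off, because it is precisely the set on which the integrand vanishes. Everything else is a controlled adaptation of the Bernoulli-type argument already used for Main Theorem 1 and for the outer asymmetry estimate.
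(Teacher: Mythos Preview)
Your proposal is correct and follows essentially the same approach as the paper's proof: both rerun the web-function comparison of Main Theorem~3, keep the extra integral over the hole, and pass from the $p$-th powers to the trace constants via the Bernoulli/concavity step used in Proposition~\ref{pro:gferone}. The only cosmetic difference is that the paper reaches the bound $\int_{\Theta\setminus K_{\Omega_0}}(|\nabla w_{\Omega_0}|^p+w_{\Omega_0}^p-z_m^p)\,dx$ by splitting $\int_{\Omega_0}$ twice (once with the hole $\Theta$, once with $K_{\Omega_0}$) and using $|K_{\Omega_0}\setminus\Theta|=|\Theta\setminus K_{\Omega_0}|$, whereas you get there more directly by restricting the nonnegative integrand from $\Theta$ to $\Theta\setminus\overline{K_{\Omega_0}}$; note that your parenthetical reference to the vanishing on $K_{\Omega_0}\setminus\Theta$ is not the relevant fact here---what you actually need (and have) is nonnegativity on $\Theta\cap K_{\Omega_0}$.
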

\begin{proof}
Let $z\in W^{1,p}(A_{R_1,R_2})$ be the minimizer for $\sigma_{p,q}(A_{R_1,R_2})$ with $\|z\|_{L^q(\partial B_{R_2})}=1$. Let us define the test function $w_{\Omega_0}\in W^{1,p}(\Omega_0)$ as in \eqref{GFerone} and denote $t_{\Omega_0},K_{\Omega_0}$ as in Definition \ref{inner_asymmetry_def}. We recall that $t_{\Omega_0}\ge R_2-R_1$ 
and so $w_{\Omega_0}=z_m$ in $K_{\Omega_0}$.

Notice that we can use $\Omega_0$ and as a test function for both $\Omega_0\setminus\overline{\Theta}$ and $\Omega_0\setminus\overline{K_{\Omega_0}}$, 
since $w_{\Omega_0}$ depends only on $\Omega_0$. We then get estimates \eqref{zp}, \eqref{dzp} and \eqref{zq} for the domain $\Omega_0\setminus\overline{K_{\Omega_0}}$:
\begin{equation}
    \label{eq:compnorm}
\begin{split}
\int_{\Omega_0\setminus\overline{K_{\Omega_0}}}|\nabla w_{\Omega_0}|^p\:dx\le\int_{A_{R_1,R_2}}|\nabla z|^p\:dx,\\
\int_{\Omega_0\setminus \overline{K_{\Omega_0}}}w^p_{\Omega_0}\:dx\le\int_{A_{R_1,R_2}}z^p\:dx,\\
\int_{\partial\Omega_0}w^q_{\Omega_0}\:d\mathcal{H}^{n-1}=\int_{\partial B_{R_2}}z^q\:d\mathcal{H}^{d-1}=1.
\end{split}
\end{equation}
The idea is to split smartly the volume integrals of over $\Omega_0$, choosing a different hole each time. We start with $\int_{\Omega_0}w_{\Omega_0}^p\:dx$. If we focus on the hole $\Theta$ we get
\begin{equation}
\label{eq:theta-kappa}
    \begin{split}     
\int_{\Omega_0}w_{\Omega_0}^p\:dx&=\int_{\Omega_0\setminus \overline{\Theta}}w_{\Omega_0}^p\:dx+\int_{\Theta\setminus K_{\Omega_0}}w_{\Omega_0}^p\:dx+\int_{K_{\Omega_0}\cap\Theta}w_{\Omega_0}^p\:dx.
    \end{split}
\end{equation}
On the other hand, using $K_{\Omega_0}$ as a hole,  \eqref{eq:compnorm} entails
\begin{equation}\label{eq:full}
\begin{split}
\int_{\Omega_0}w_{\Omega_0}^p\:dx&=\int_{\Omega_0\setminus \overline{K_{\Omega_0}}}w_{\Omega_0}^p\:dx+\int_{K_{\Omega_0}\setminus\Theta}w_{\Omega_0}^p\:dx+\int_{K_{\Omega_0}\cap\Theta}w_{\Omega_0}^p\:dx\\
&\le\int_{A_{R_1,R_2}}z^p\:dx+z_m^p|\Theta\setminus K_{\Omega_0}|+\int_{K_{\Omega_0}\cap\Theta}w_{\Omega_0}^p\:dx,
\end{split}
\end{equation}
where we have used that $w_{\Omega_0}=z_m$ in $K_{\Omega_0}$ and $|K_{\Omega_0}\setminus\Theta|=|\Theta\setminus K_{\Omega_0}|$. 
Putting together \eqref{eq:theta-kappa} and \eqref{eq:full}, we get 
\begin{equation}  \label{eq:quantinorm}
\int_{\Omega_0\setminus \overline{\Theta}}w^p_{\Omega_0}\:dx\le\int_{A_{R_1,R_2}}z^p\:dx-\int_{\Theta\setminus K_{\Omega_0}}(w_{\Omega_0}^p-z_m^p)\:dx.
\end{equation}
Arguing analogously for $\int_{\Omega_0\setminus \overline{\Theta}}|\nabla w_{\Omega_0}|^p\:dx$ we get
\begin{equation}  \label{eq:quantigrad}
\int_{\Omega_0\setminus \Theta}|\nabla w_{\Omega_0}|^p\:dx\le\int_{A_{R_1,R_2}}|\nabla z|^p\:dx-\int_{\Theta\setminus K_{\Omega_0}}|\nabla w_{\Omega_0}|^p\:dx.
\end{equation}
The conclusion follows plugging into \eqref{pqbuco_min} estimates \eqref{eq:compnorm}, \eqref{eq:quantinorm} and the equality in \eqref{eq:quantigrad}:
\begin{align*}
\tilde{\sigma}_{p,q}(A_{R_1,R_2}) &=\left(\int_{A_{R_1,R_2}}|\nabla z|^p\:dx+\int_{A_{R_1,R_2}}z^p\:dx\right)^{1/p}\\
&\ge\left(\int_{\Omega_0\setminus \overline{\Theta}}|\nabla w_{\Omega_0}|^p\:dx+\int_{\Theta\setminus K_{\Omega_0}}|\nabla w_{\Omega_0}|^p\:dx+\int_{\Omega_0\setminus \overline{\Theta}}w^p_{\Omega_0}\:dx+\int_{\Theta\setminus K_{\Omega_0}}(w_{\Omega_0}^p-z_m^p)\:dx\right)^{1/p}\\
&\ge\left(\tilde{\sigma}^p_{p,q}(\Omega_0\setminus\overline{\Theta})+\tilde{\mathcal{A}}(\Theta;\Omega_0)\right)^{1/p}=\tilde{\sigma}_{p,q}(\Omega_0\setminus\overline{\Theta})\left(1+\frac{\tilde{\mathcal{A}}(\Theta;\Omega_0)}{\tilde{\sigma}^p_{p,q}(\Omega_0\setminus\overline{\Theta})}\right)^{1/p}\\
&\ge\tilde{\sigma}_{p,q}(\Omega_0\setminus\overline{\Theta})\left(1+\frac{\tilde{\mathcal{A}}(\Theta;\Omega_0)}{p\cdot\tilde{\sigma}^p_{p,q}(\Omega_0\setminus\overline{\Theta})}\right)=\tilde{\sigma}_{p,q}(\Omega_0\setminus\overline{\Theta})+\frac{\tilde{\mathcal{A}}(\Theta;\Omega_0)}{p\cdot\tilde{\sigma}^{p-1}_{p,q}(\Omega_0\setminus\overline{\Theta})}\\
&\ge\tilde{\sigma}_{p,q}(\Omega_0\setminus\overline{\Theta})+\frac{P(\Omega_0)^{\frac{p-1}{q}}}{p|\Omega_0\setminus\overline{\Theta}|^{\frac{p-1}{p}}}\tilde{\mathcal{A}}(\Theta;\Omega_0)=\tilde{\sigma}_{p,q}(\Omega_0\setminus\overline{\Theta})+\frac{ P(B_{R_2})^{\frac{p-1}{q}}}{p|A_{R_1,R_2}|^{\frac{p-1}{p}}}\tilde{\mathcal{A}}(\Theta;\Omega_0).
\end{align*}
\end{proof}
Propositions \ref{pro:buco2} and \ref{teo:buco3} immediately entail the following
\begin{corollary}\label{cor:stabuco}
For any admissible set $\Omega_0\setminus\overline{\Theta}$, there exists a positive constant $C(d,p,q,R_1,R_2)$ such that
$$
\tilde{\sigma}_{p,q}(A_{R_1,R_2})-\tilde{\sigma}_{p,q}(\Omega_0\setminus\overline{\Theta})\ge C(d,p,q,R_1,R_2)\alpha_{hyb}(\Omega_0\setminus\overline{\Theta}).
$$
\end{corollary}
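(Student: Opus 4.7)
The corollary is essentially an immediate synthesis of Propositions \ref{pro:buco2} and \ref{teo:buco3}, since $\alpha_{hyb}$ is defined as the maximum of the two asymmetry terms that were handled separately. My plan is therefore short.

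First I would rename the constants coming from each of the two previous propositions: let $C_1=C_1(d,p,q,R_1,R_2)>0$ be the constant produced by Proposition \ref{pro:buco2}, so that
\begin{equation*}
\tilde{\sigma}_{p,q}(A_{R_1,R_2})-\tilde{\sigma}_{p,q}(\Omega_0\setminus\overline{\Theta})\ge C_1\,\tilde{\alpha}_{out}(\Omega_0),
\end{equation*}
and let $C_2=C_2(d,p,q,R_1,R_2)>0$ be the constant coming from Proposition \ref{teo:buco3}, so that
\begin{equation*}
\tilde{\sigma}_{p,q}(A_{R_1,R_2})-\tilde{\sigma}_{p,q}(\Omega_0\setminus\overline{\Theta})\ge C_2\,\tilde{\mathcal{A}}(\Theta;\Omega_0).
\end{equation*}
Both estimates hold without any assumption besides the admissibility of $\Omega_0\setminus\overline{\Theta}$, so I may use them simultaneously for the same set.

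Then I would set $C:=\min\{C_1,C_2\}>0$ and observe that the left-hand side, being a single nonnegative number bigger than both $C_1\,\tilde{\alpha}_{out}(\Omega_0)$ and $C_2\,\tilde{\mathcal{A}}(\Theta;\Omega_0)$, is also bigger than $C\cdot\max\{\tilde{\alpha}_{out}(\Omega_0),\tilde{\mathcal{A}}(\Theta;\Omega_0)\}$. By Definition \ref{inner_asymmetry_def}, this max is exactly $\alpha_{hyb}(\Omega_0\setminus\overline{\Theta})$, yielding
\begin{equation*}
\tilde{\sigma}_{p,q}(A_{R_1,R_2})-\tilde{\sigma}_{p,q}(\Omega_0\setminus\overline{\Theta})\ge C\,\alpha_{hyb}(\Omega_0\setminus\overline{\Theta}),
\end{equation*}
which is the claim.

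There is no real obstacle here: the hybrid asymmetry was designed as a maximum precisely so that controlling each term separately is enough. The only thing to double-check is that $C_1$ and $C_2$ truly depend only on $d,p,q,R_1,R_2$ and not on the particular admissible set (which is indeed what Propositions \ref{pro:buco2} and \ref{teo:buco3} assert), so that taking their minimum produces a uniform constant in the class of admissible sets. Main Theorem 4 then follows by combining this corollary with the observation that, within $\mathcal{T}_{R_1,R_2}$, a small spectral deficit forces $\Omega_0\setminus\overline{\Theta}$ to be $(R_1,R_2)$-nearly annular.
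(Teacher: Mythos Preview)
Your proof is correct and matches the paper's approach: the paper states the corollary as an immediate consequence of the two propositions bounding the deficit by $\tilde{\alpha}_{out}(\Omega_0)$ and $\tilde{\mathcal{A}}(\Theta;\Omega_0)$ separately, and your argument of taking $C=\min\{C_1,C_2\}$ and using $\alpha_{hyb}=\max\{\tilde{\alpha}_{out},\tilde{\mathcal{A}}\}$ is exactly the intended combination.
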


\subsection{Reduction to nearly annular sets}
Our aim is now to prove that the previous quantitative improvement can be stated in terms of stability of $\tilde{\sigma}_{p,q}$ in the class $\mathcal{T}_{R_1,R_2}$, defined in \eqref{admissiblesets}. The crucial point is to show that $\tilde{\sigma}_{p,q}$ takes values close to the optimum if and only if $\Omega_0\setminus\overline{\Theta}\in\mathcal{T}_{R_1,R_2}$ is nearly annular. In order to obtain it, we proceed as in \cite{cpp}, Section 4.3, showing to the readers, for their convenience, some of the proofs we deem more relevant.

We start by giving the so-called "isodiametric control" of $\tilde{\sigma}_{p,q}$. In other words, we show that if $\tilde{\sigma}_{p,q}(\Omega_0\setminus\overline{\Theta})$ is not much smaller than $\tilde{\sigma}_{p,q}(A_{R_1,R_2})$, then $diam(\Omega_0)$ is controlled by a uniform constant.


Isodiametric controls of eigenvalues, like the type discussed here, are a standard tool in shape maximization problems. This control usually contributes to compactness when tackling global existence problems (consider, for instance, the isodiametric control of the Robin spectrum detailed in \cite{bucurcito} and of the Steklov spectrum presented in \cite{bbg}, both of which are valid even for higher eigenvalues across a broader range of sets).

It is worth noting that the subsequent result extends to holed domains beyond the class $\mathcal{T}_{R_1,R_2}$. Consequently, we provide the proof in a more general context, as it is trivial for domains belonging to $\mathcal{T}_{R_1,R_2}$.

\begin{lemma}\label{isod}
Let $0<R_1<R_2<+\infty$. There exists a positive constant $C(d,p,q,R_1,R_2)$ such that, for every $\Omega_0\setminus\overline{\Theta}\subset\R^d$ with $\Omega_0$ and $\Theta$ open convex sets such that $\overline{\Theta}\subset\subset\Omega_0$, $P(\Omega_0)=P(B_{R_2})$ and $|\Omega_0\setminus\overline{\Theta}|=|A_{R_1,R_2}|$, if 
$$
\tilde{\sigma}_{p,q}(\Omega_0\setminus\overline{\Theta})\ge\frac{ \tilde{\sigma}_{p,q}(A_{R_1,R_2})}{2},
$$
then
$$\text{diam}(\Omega_0)\le C(d,p,q,R_1,R_2).$$
\begin{proof}
The proof is a straightforward adaptation to our context of the analogous result for the Robin-Neumann eigenvalues, see \cite[Lemma 4.4]{cpp} (see \cite[Lemma 3.5]{cito2021quantitative} for the proof for the Robin eigenvalues in the convex case).

It is based on the following contradiction argument. If there exists a sequence ${\Omega_0}_j\setminus\overline{\Theta_j}$ such that 
$$\tilde{\sigma}_{p,q}({\Omega_0}_j\setminus\overline{\Theta_j})\ge \frac{\tilde{\sigma}_{p,q}(A_{R_1,R_2})}{2}\quad \text{and}\quad\text{diam}({\Omega_0}_j)\to+\infty,$$
the convexity of ${\Omega_0}_j$ and the constraint $P({\Omega_0}_j)=P(B_{R_2})$, together with inequality 
$$|A|\le\rho P(A),$$
where $\rho$ is the inradius of the convex set $A$, (see, for instance, \cite[Prop. 2.4.3]{bucur2004variational}), would imply
that $|{\Omega_0}_j|$ vanishes as $j$ goes to $+\infty$. Now, using the characteristic function of ${\Omega_0}_j\setminus\overline{\Theta_j}$ as a test, we obtain (see Remark \ref{trivial})
$$\frac{\tilde{\sigma}_{p,q}(A_{R_1,R_2})}{2}\le\tilde{\sigma}_{p,q}({\Omega_0}_j\setminus\overline{\Theta_j})\le\frac{|{\Omega_0}_j\setminus\overline{\Theta_j}|^{1/p}}{P({\Omega_0}_j)^{1/q}} \to 0,$$
a contradiction.
\end{proof}
\end{lemma}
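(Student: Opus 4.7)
The plan is to argue by contradiction and, roughly speaking, show that an admissible holed set with very large outer diameter must have very small outer volume, clashing with the volume constraint $|\Omega_0\setminus\overline{\Theta}|=|A_{R_1,R_2}|$. Suppose there exists a sequence $\{\Omega_{0,j}\setminus\overline{\Theta_j}\}_j$ of admissible sets with $P(\Omega_{0,j})=P(B_{R_2})$, $|\Omega_{0,j}\setminus\overline{\Theta_j}|=|A_{R_1,R_2}|$, $\text{diam}(\Omega_{0,j})\to +\infty$, and yet $\tilde{\sigma}_{p,q}(\Omega_{0,j}\setminus\overline{\Theta_j})\ge \tilde{\sigma}_{p,q}(A_{R_1,R_2})/2$.

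The key geometric step is the assertion that a sequence of convex bodies in $\R^d$ with bounded perimeter and diverging diameter must have vanishing inradius. If, along a subsequence, one had $\rho(\Omega_{0,j})\ge \rho_0>0$, then by convexity $\Omega_{0,j}$ would contain the convex hull of a ball of radius $\rho_0$ with a pair of diameter-realizing points of $\Omega_{0,j}$; this ``bicone'' already has surface area at least of order $\rho_0^{d-2}\,\text{diam}(\Omega_{0,j})$, forcing $P(\Omega_{0,j})\to +\infty$ against the perimeter constraint. Hence $\rho(\Omega_{0,j})\to 0$.

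Combining this with the well-known convex estimate $|K|\le \rho(K)\,P(K)$ (see, e.g., Prop.~2.4.3 in \cite{bucur2004variational}) and the fixed outer perimeter yields $|\Omega_{0,j}|\to 0$. This already contradicts $|\Omega_{0,j}|\ge |\Omega_{0,j}\setminus\overline{\Theta_j}|=|A_{R_1,R_2}|>0$; alternatively, one may invoke the universal upper bound of Remark \ref{trivial} to get $\tilde{\sigma}_{p,q}(\Omega_{0,j}\setminus\overline{\Theta_j})\le |\Omega_{0,j}\setminus\overline{\Theta_j}|^{1/p}/P(B_{R_2})^{1/q}\to 0$, contradicting the lower bound and, as a byproduct, extending the conclusion to admissible holed domains on a wider class than $\mathcal{T}_{R_1,R_2}$ (in analogy with the isodiametric controls for Robin/Steklov spectra in \cite{bucurcito,bbg,cito2021quantitative,cpp}). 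The only non-routine ingredient is the geometric bicone step; it is elementary but should either be spelled out or extracted from the cited isodiametric lemmas.
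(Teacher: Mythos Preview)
Your proof is correct and follows essentially the same contradiction scheme as the paper: divergent diameter with fixed outer perimeter forces $\rho(\Omega_{0,j})\to 0$, hence $|\Omega_{0,j}|\to 0$ via $|K|\le\rho(K)P(K)$, and then one concludes either directly from the volume constraint or from the trivial upper bound on $\tilde\sigma_{p,q}$. You spell out the inradius step via the bicone argument, which the paper leaves implicit, and you correctly observe that under the stated volume constraint the contradiction is in fact immediate (the paper itself remarks that the result is ``trivial'' for $\mathcal T_{R_1,R_2}$ and that the test-function route is there to cover a wider class).
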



Some form of continuity or semicontinuity is required to simplify the problem to nearly annular sets. To achieve this, we prove the upper semicontinuity of $\tilde{\sigma}_{p,q}$ with respect to Hausdorff convergence. We start by establishing the lower semicontinuity of the boundary integral. For the special case $p=q=2$, see \cite[Lemma 4.5]{cpp}; the general case can be proved, for example, adapting \cite[Lemma 4.2]{bugitr16} to our case.

\begin{lemma}\label{pro_lscquadro}
Let $p>1$, $1<q<p^*$ and $E_j,E\subset\R^d$ be bounded convex domains such that $E_j\to E$ in the sense of Hausdorff. Let $w_j,w\in W^{1,p}(\R^d)$. If $w_j\rightharpoonup w$ in $W^{1,p}(\R^d)$, then it holds
$$\int_{\partial E} w^q\:d\mathcal{H}^{d-1}\le \liminf_{j}\int_{\partial E_j} w_j^q\:d\mathcal{H}^{d-1}$$
\end{lemma}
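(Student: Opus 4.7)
The plan is to reduce to $w_j,w\ge 0$ by passing to absolute values (use that $|w_j|\rightharpoonup |w|$ weakly in $W^{1,p}(\R^d)$: the bound $\||w_j|\|_{W^{1,p}}=\|w_j\|_{W^{1,p}}$ gives weak compactness, and Rellich--Kondrachov identifies the weak limit), and to a subsequence realizing the $\liminf$. Rellich--Kondrachov then yields, along a further subsequence, $w_j\to w$ a.e.\ and in $L^r_{\mathrm{loc}}(\R^d)$ for every $r\in[1,p^*)$, in particular for $r=q$.

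Next I would invoke the standard convex-geometric fact that bounded convex domains $E_j\to E$ in Hausdorff satisfy $P(E_j)\to P(E)$ and $\H^{d-1}\lfloor\partial E_j\rightharpoonup^\ast\H^{d-1}\lfloor\partial E$ as Radon measures on $\R^d$. Fixing $x_0\in\mathrm{int}(E)$, for $j$ large one has $x_0\in\mathrm{int}(E_j)$ with $d(x_0,\partial E_j)$ bounded below uniformly; consequently the radial functions $\rho_{E_j}\colon\S^{d-1}\to(0,+\infty)$ parametrizing $\partial E_j$ from $x_0$ are Lipschitz with a uniform bound and converge uniformly to $\rho_E$.

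The area formula then expresses the boundary integral as
\[
\int_{\partial E_j}w_j^q\,d\H^{d-1}=\int_{\S^{d-1}}w_j\bigl(x_0+\rho_{E_j}(\xi)\xi\bigr)^q\,J_{E_j}(\xi)\,d\xi,
\]
where $J_{E_j}$ is the tangential Jacobian; it is then a matter of Fatou's lemma on $\S^{d-1}$ once one establishes the pointwise bounds $\liminf_j J_{E_j}(\xi)\ge J_E(\xi)$ and $w_j(x_0+\rho_{E_j}(\xi)\xi)\to w(x_0+\rho_E(\xi)\xi)$ for a.e.\ $\xi\in\S^{d-1}$. The first bound follows from a Reshetnyak-type lower semicontinuity of the surface-area functional for graphs of convex-Lipschitz functions.

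The main obstacle is the second pointwise convergence, since the evaluation points $x_0+\rho_{E_j}(\xi)\xi$ move with $j$ while $w_j\to w$ only a.e.\ in $\R^d$. I would handle this by a Lusin--Egorov argument: for every $\eta>0$ there is a compact $K_\eta\subset\R^d$ with $|\R^d\setminus K_\eta|<\eta$ on which $w$ is continuous and $w_j\to w$ uniformly. Slicing $K_\eta$ by radial lines from $x_0$ and applying Fubini, for a.e.\ $\xi\in\S^{d-1}$ the moving points $x_0+\rho_{E_j}(\xi)\xi$ lie in $K_\eta$ for all large $j$; letting $\eta\to 0$ along a countable sequence yields the required a.e.\ convergence on $\S^{d-1}$, completing the proof.
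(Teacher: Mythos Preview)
The paper does not actually give a proof of this lemma: it only states that the case $p=q=2$ is in \cite{cpp} and that the general case follows by adapting \cite[Lemma~4.2]{bugitr16}. So there is no detailed argument to compare yours against; I evaluate your plan on its own merits.

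There is a genuine gap in your Step~6. The Lusin--Egorov set $K_\eta$ has \emph{small $d$-dimensional Lebesgue complement}, but the surfaces $\partial E$ and $\partial E_j$ have $d$-dimensional Lebesgue measure zero, so nothing prevents them from being entirely contained in $K_\eta^c$. Your Fubini slicing only tells you that for a.e.\ $\xi$ the one-dimensional set $\{r>0: x_0+r\xi\in K_\eta^c\}$ has small length; it says nothing about whether the \emph{specific} radii $\rho_{E_j}(\xi)$ or $\rho_E(\xi)$ fall into that bad set. In particular, the assertion ``for a.e.\ $\xi$ the moving points lie in $K_\eta$ for all large $j$'' does not follow, and the subsequent step ``letting $\eta\to 0$'' cannot recover pointwise convergence on the boundaries. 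Almost-everywhere convergence in $\R^d$ with respect to Lebesgue measure simply carries no information on $(d-1)$-dimensional surfaces.

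A secondary issue: your justification of the pointwise bound $\liminf_j J_{E_j}(\xi)\ge J_E(\xi)$ via ``Reshetnyak-type lower semicontinuity'' is a category error --- Reshetnyak results concern integrated surface functionals, not pointwise Jacobians. For convex bodies the desired bound is in fact true (even with equality a.e.), but for a different reason: the gauge functions of $E_j$ based at $x_0$ are convex and converge locally uniformly, hence their gradients converge a.e., which gives $\nabla\rho_{E_j}\to\nabla\rho_E$ a.e.\ on $\S^{d-1}$.

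A route that does work, and is presumably what the cited references do, avoids pointwise convergence on moving surfaces altogether. Since the $E_j$ are convex with uniformly bounded diameter and uniformly positive inradius, the trace operators $W^{1,p}(\R^d)\to L^q(\partial E_j)$ are bounded \emph{uniformly in $j$}. One then approximates $w$ by smooth $w^\varepsilon$ (so that $(w^\varepsilon)^q$ is continuous and the weak-$\ast$ convergence $\mathcal H^{d-1}\lfloor\partial E_j\rightharpoonup\mathcal H^{d-1}\lfloor\partial E$ applies), controls the error $\int_{\partial E_j}\lvert w^q-(w^\varepsilon)^q\rvert$ uniformly in $j$ via the uniform trace bound, and handles $w_j-w$ using compactness of the trace on a fixed Lipschitz extension domain.
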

We are now in a position to prove the upper semicontinuity for $\tilde{\sigma}_{p,q}$.
\begin{lemma}\label{usc_lambda}
Let ${\Omega_0}_j\setminus\overline{\Theta_j},{\Omega_0}\setminus\overline{\Theta}\in\mathcal{T}_{R_1,R_2}$,  with ${\Omega_0}_j\setminus\overline{\Theta_j}\to{\Omega_0}\setminus\overline{\Theta}$ in the sense of Hausdorff (and in measure). Then 
$$
\limsup_{j\to+\infty} \tilde{\sigma}_{p,q}({\Omega_0}_j\setminus\overline{\Theta_j})  \leq\tilde{\sigma}_{p,q}({\Omega_0}\setminus\overline{\Theta}).
$$
\begin{proof}
Let $u\in W^{1,p}({\Omega_0}\setminus\overline{\Theta})$ a minimizer for $\tilde{\sigma}_{p,q}({\Omega_0}\setminus\overline{\Theta})$, and let $\overline{u}\in W^{1,p}(\R^d)$ an extension of $u$ to the whole of $\R^d$. Note that $\overline{u}$ is a test function for ${\Omega_0}_j\setminus\overline{\Theta_j}$ for every $j\in\N$. Moreover, under the given hypotheses, the convex sets ${\Omega_0}_j$ converge to $\Omega_0$
in the Hausdorff and measure senses. Hence, we have
$$
\int_{\partial \Omega_0}u^q d\H^{d-1}=\int_{\partial \Omega_0}\overline{u}^q d\H^{d-1}\le\liminf_{j\to+\infty}\int_{\partial {\Omega_0}_j}\overline{u}^q d\H^{d-1}
$$
as a consequence of Lemma \ref{pro_lscquadro}. Moreover, both volume integrals are continuous in view of the convergence in measure $\Omega_j\to\Omega$ and this entails the upper semicontinuity of the quotients in \eqref{pqbuco_min}. Therefore, we have
\begin{align*}
\tilde{\sigma}_{p,q}({\Omega_0}\setminus\overline{\Theta})&=\dfrac{\ds\left(\int _{\Omega_0\setminus\overline{\Theta}}|\nabla u|^p\;dx+\ds\int_{\Omega_0\setminus\overline{\Theta}}|u|^p\;dx\right)^{1/p}}{\left(\ds\int_{\partial \Omega_0}|u|^q \;d\mathcal{H}^{d-1}\right)^{1/q}}\\
&\ge\limsup_{j\to+\infty}\dfrac{\ds\left(\int _{{\Omega_0}_j\setminus\overline{\Theta_j}}|\nabla \overline{u}|^p\;dx+\ds\int_{{\Omega_0}_j\setminus\overline{\Theta_j}}|\overline{u}|^p\;dx\right)^{1/p}}{\left(\ds\int_{\partial{\Omega_0}_j}|\overline{u}|^q \;d\mathcal{H}^{d-1}\right)^{1/q}}
\ge\limsup_{j\to+\infty} \tilde{\sigma}_{p,q}({\Omega_0}_j\setminus\overline{\Theta_j}).
\end{align*}
\end{proof}
\end{lemma}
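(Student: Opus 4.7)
The plan is to feed a minimizer from the limit domain into the variational characterization \eqref{pqbuco_min} for the approximating domains and pass to the limit $j \to +\infty$. Concretely, let $u \in W^{1,p}({\Omega_0}\setminus\overline{\Theta})$ be a minimizer for $\tilde{\sigma}_{p,q}({\Omega_0}\setminus\overline{\Theta})$ and extend it to some $\overline{u} \in W^{1,p}(\R^d)$ using a standard extension operator, which is available since ${\Omega_0}\setminus\overline{\Theta}$ is a Lipschitz ring domain (by convexity of $\Omega_0$ and $\Theta$ together with the membership in $\mathcal{T}_{R_1,R_2}$). For each $j$, the restriction of $\overline{u}$ to ${\Omega_0}_j\setminus\overline{\Theta_j}$ is a valid competitor in \eqref{pqbuco_min}, yielding
\begin{equation*}
\tilde{\sigma}_{p,q}({\Omega_0}_j\setminus\overline{\Theta_j}) \le \frac{\left(\ds\int _{{\Omega_0}_j\setminus\overline{\Theta_j}}|\nabla \overline{u}|^p+|\overline{u}|^p\;dx\right)^{1/p}}{\left(\ds\int_{\partial {\Omega_0}_j}|\overline{u}|^q \;d\mathcal{H}^{d-1}\right)^{1/q}}.
\end{equation*}

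Next I would control the numerator and denominator separately. The Hausdorff convergence ${\Omega_0}_j\setminus\overline{\Theta_j}\to{\Omega_0}\setminus\overline{\Theta}$, combined with the convergence in measure stated in the hypothesis, together with $|\nabla \overline{u}|^p+|\overline{u}|^p \in L^1(\R^d)$, gives by dominated convergence
\begin{equation*}
\int_{{\Omega_0}_j\setminus\overline{\Theta_j}} (|\nabla \overline{u}|^p + |\overline{u}|^p)\,dx \ \longrightarrow \ \int_{{\Omega_0}\setminus\overline{\Theta}} (|\nabla u|^p + |u|^p)\,dx.
\end{equation*}
For the denominator, the constant sequence $w_j:=\overline{u}$ converges weakly to $\overline{u}$ in $W^{1,p}(\R^d)$, so Lemma \ref{pro_lscquadro} applied to $E_j = {\Omega_0}_j$ and $E = {\Omega_0}$ yields
\begin{equation*}
\int_{\partial {\Omega_0}} |\overline{u}|^q \, d\mathcal{H}^{d-1} \le \liminf_{j\to+\infty} \int_{\partial {\Omega_0}_j} |\overline{u}|^q \, d\mathcal{H}^{d-1},
\end{equation*}
so the reciprocal $1/q$-th power of the boundary integral is upper semicontinuous along the sequence.

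Taking $\limsup_{j\to+\infty}$ in the competitor inequality and combining the two semicontinuity statements gives $\limsup_j \tilde{\sigma}_{p,q}({\Omega_0}_j\setminus\overline{\Theta_j}) \le \tilde{\sigma}_{p,q}({\Omega_0}\setminus\overline{\Theta})$, since $u = \overline{u}|_{{\Omega_0}\setminus\overline{\Theta}}$ was chosen to be a minimizer on the limit domain. The main obstacle is the behaviour of the trace integral on the moving outer boundaries $\partial {\Omega_0}_j$, which is precisely the content of Lemma \ref{pro_lscquadro}; once that tool is granted the remainder is a standard $\Gamma$-type semicontinuity computation. A minor technical point is the existence of a global $W^{1,p}(\R^d)$-extension $\overline{u}$, but this is routine given the uniform Lipschitz regularity enforced by the admissibility conditions defining $\mathcal{T}_{R_1,R_2}$.
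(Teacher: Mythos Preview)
Your proposal is correct and follows essentially the same approach as the paper: extend a minimizer $u$ for the limit domain to $\overline{u}\in W^{1,p}(\R^d)$, use it as a competitor in \eqref{pqbuco_min} for each approximating domain, pass to the limit in the volume integrals via convergence in measure, and control the boundary integral via Lemma \ref{pro_lscquadro}. Your version is slightly more explicit about the justification of the extension and the use of dominated convergence, but the argument is otherwise identical.
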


The uniqueness of $A_{R_1,R_2}$ as the maximizer of $\tilde{\sigma}_{p,q}(\cdot)$, coupled with the upper semicontinuity of $\tilde{\sigma}_{p,q}(\cdot)$ and the isodiametric control in Lemma \ref{isod}, ensures the following convergence result. The proof is not shown here, it is a actually contained in \cite[Lemma 4.7]{cpp}.

\begin{lemma}\label{lem:maxi}
Let $\{{\Omega_0}_j\setminus\overline{\Theta_j}\}_{j\in\N}\subset\mathcal{T}_{R_1,R_2}$ be a maximizing sequence for $\tilde{\sigma}_{p,q}$ with ${\Omega_0}_j\setminus\overline{\Theta_j}        $ is barycentered at the origin. Then  $$d_\H({\Omega_0}_j\setminus\overline{\Theta_j},A_{R_1,R_2}) \to 0.$$
\end{lemma}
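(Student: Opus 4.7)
The strategy is a standard compactness–uniqueness argument: extract a Hausdorff-convergent subsequence via Blaschke, pass the constraints to the limit, and identify the limit with $A_{R_1,R_2}$ using the optimality and uniqueness statement of Main Theorem 3.

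First, since $\{{\Omega_0}_j\setminus\overline{\Theta_j}\}$ is maximizing, for $j$ large one has $\tilde{\sigma}_{p,q}({\Omega_0}_j\setminus\overline{\Theta_j})\ge\tilde{\sigma}_{p,q}(A_{R_1,R_2})/2$, so the isodiametric control of Lemma \ref{isod} yields a uniform bound on $\mathrm{diam}({\Omega_0}_j)$. Combined with the barycenter normalization, the sets ${\Omega_0}_j$ lie in a fixed ball of $\R^d$. The holes $\Theta_j\subset{\Omega_0}_j$ are likewise uniformly bounded. By the Blaschke selection theorem applied to convex bodies, we can extract a (not relabeled) subsequence and find convex sets $\Omega_0^\infty$, $\Theta^\infty$ such that
\[
{\Omega_0}_j\xrightarrow{\,d_\H\,}\Omega_0^\infty,\qquad \overline{\Theta_j}\xrightarrow{\,d_\H\,}\overline{\Theta^\infty}.
\]

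Next I pass the constraints defining $\mathcal{T}_{R_1,R_2}$ to the limit. Hausdorff convergence of convex bodies implies continuity of Lebesgue measure and of perimeter, so $P(\Omega_0^\infty)=P(B_{R_2})$ and $|\Omega_0^\infty\setminus\overline{\Theta^\infty}|=|A_{R_1,R_2}|$. The inequalities $d_\H(\Theta_j,{\Omega_0}_j)\ge\vartheta_{R_1,R_2}$ and $\rho(\Theta_j)\ge\vartheta_{R_1,R_2}$ are stable under Hausdorff convergence of convex bodies, giving $\overline{\Theta^\infty}\subset\subset\Omega_0^\infty$ and $\rho(\Theta^\infty)\ge\vartheta_{R_1,R_2}$. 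Hence $\Omega_0^\infty\setminus\overline{\Theta^\infty}\in\mathcal{T}_{R_1,R_2}$.

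Now apply Lemma \ref{usc_lambda} (upper semicontinuity of $\tilde{\sigma}_{p,q}$ along Hausdorff-convergent sequences in $\mathcal{T}_{R_1,R_2}$) together with Main Theorem 3:
\[
\tilde{\sigma}_{p,q}(A_{R_1,R_2})=\lim_{j\to+\infty}\tilde{\sigma}_{p,q}({\Omega_0}_j\setminus\overline{\Theta_j})\le\tilde{\sigma}_{p,q}(\Omega_0^\infty\setminus\overline{\Theta^\infty})\le\tilde{\sigma}_{p,q}(A_{R_1,R_2}).
\]
Therefore $\Omega_0^\infty\setminus\overline{\Theta^\infty}$ realizes the maximum, so by the equality case in Main Theorem 3 one has $\Omega_0^\infty\setminus\overline{\Theta^\infty}=A_{R_1,R_2}$ (up to translation, which is fixed by the barycenter normalization of the ${\Omega_0}_j$, whose barycenter converges to $0$, forcing $\Omega_0^\infty=B_{R_2}$ and $\Theta^\infty=B_{R_1}$). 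Since the same reasoning applies to any subsequence of the original one, every subsequence admits a further subsequence converging in Hausdorff distance to $A_{R_1,R_2}$, which yields the convergence of the full sequence.

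The main delicate point is ensuring that the Hausdorff limit of the pair $({\Omega_0}_j,\Theta_j)$ still belongs to $\mathcal{T}_{R_1,R_2}$: the volume constraint and the interior conditions $\rho(\Theta_j)\ge\vartheta_{R_1,R_2}$ and $d_\H(\Theta_j,{\Omega_0}_j)\ge\vartheta_{R_1,R_2}$ are exactly what prevent degeneration (collapse of $\Theta_j$ to a lower-dimensional set, or $\Theta_j$ touching $\partial{\Omega_0}_j$) and make the Blaschke extraction meaningful; once this is secured, upper semicontinuity plus the uniqueness of the optimal shape closes the argument.
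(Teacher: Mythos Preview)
Your argument is correct and follows essentially the same route as the paper's (which in fact defers to \cite[Lemma 4.7]{cpp}): isodiametric control from Lemma \ref{isod}, Blaschke compactness, closedness of $\mathcal{T}_{R_1,R_2}$ under Hausdorff limits, upper semicontinuity from Lemma \ref{usc_lambda}, and the uniqueness of $A_{R_1,R_2}$ from Main Theorem 3. Your version is in fact more explicit than the paper's sketch, spelling out the passage of the constraints to the limit and the subsequence argument yielding convergence of the full sequence.
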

The above lemma allows us to consider only nearly annular sets with their barycenter at the origin for our main stability result. The proof, based on a straightforward contradiction argument, is omitted here as it directly replicates \cite[Lemma 4.8]{cpp}.
\begin{lemma} \label{red}
Let $0<R_1<R_2<+\infty$. There exists a positive constant $\delta_0=\delta_0(n,\beta,R_1,R_2)$ such that, if ${\Omega_0}\setminus\overline{\Theta}\in\mathcal{T}_{R_1,R_2}$ and  $$\tilde{\sigma}_{p,q}(A_{R_1,R_2})-\tilde{\sigma}_{p,q}({\Omega_0}\setminus\overline{\Theta}) \leq \delta_0$$
then, up to a translation, ${\Omega_0}\setminus\overline{\Theta}$ is a $(R_1,R_2)-$nearly annular set.
\end{lemma}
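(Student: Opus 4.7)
The plan is a standard contradiction argument, combining the upper semicontinuity of $\tilde{\sigma}_{p,q}$ and Lemma \ref{lem:maxi} with a geometric lemma on convex sets close (in Hausdorff distance) to a ball.

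Suppose, by contradiction, that no such $\delta_0$ exists. Then there is a sequence $\{\Omega_{0,j}\setminus\overline{\Theta_j}\}_j\subset\mathcal{T}_{R_1,R_2}$ such that
\[
\tilde{\sigma}_{p,q}(A_{R_1,R_2})-\tilde{\sigma}_{p,q}(\Omega_{0,j}\setminus\overline{\Theta_j})\to 0
\]
but, for every $j$, no translate of $\Omega_{0,j}\setminus\overline{\Theta_j}$ is $(R_1,R_2)$-nearly annular. After translating each $\Omega_{0,j}$ so that its barycenter lies at the origin (and exploiting the uniform control $\rho(\Theta_j)\ge\vartheta_{R_1,R_2}$), this is a maximizing sequence for $\tilde{\sigma}_{p,q}$ in $\mathcal{T}_{R_1,R_2}$. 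By Lemma \ref{lem:maxi}, up to subsequences
\[
d_{\mathcal H}\bigl(\Omega_{0,j}\setminus\overline{\Theta_j},\,A_{R_1,R_2}\bigr)\to 0,
\]
which in turn forces $d_{\mathcal H}(\Omega_{0,j},B_{R_2})\to 0$ and $d_{\mathcal H}(\Theta_j,B_{R_1})\to 0$.

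The core step is the following well known fact about convex bodies: if $K\subset\R^d$ is convex with $0$ in its interior and $d_{\mathcal H}(K,B_R)\le\varepsilon$ for $\varepsilon$ sufficiently small with respect to $R$, then $\partial K$ admits a radial parametrization
\[
\partial K=\{\xi(R+\varphi(\xi)):\xi\in\mathbb{S}^{d-1}\},
\]
with $\varphi\in W^{1,\infty}(\mathbb{S}^{d-1})$ and $\|\varphi\|_{W^{1,\infty}(\mathbb{S}^{d-1})}\le C(R)\,\varepsilon$. The $L^\infty$ bound is immediate from the definition of Hausdorff distance; the Lipschitz bound is a consequence of convexity, since the outer unit normal to $\partial K$ at a point $\xi(R+\varphi(\xi))$ is uniformly close to $\xi$ as $\varepsilon\to 0$, and the gradient of $\varphi$ on the sphere is controlled by the angle between the outer normal and the radial direction.

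Applying this lemma to $\Omega_{0,j}$ (around $0$, which is close to the barycenter of $B_{R_2}$) and to $\Theta_j$ (which contains the origin for $j$ large, since $\rho(\Theta_j)\ge\vartheta_{R_1,R_2}$ and $d_{\mathcal H}(\Theta_j,B_{R_1})\to 0$), we obtain parametrizing functions $u_j,v_j\in W^{1,\infty}(\mathbb{S}^{d-1})$ with $\|u_j\|_{W^{1,\infty}}<R_2/2$ and $\|v_j\|_{W^{1,\infty}}\le R_1/2$ for $j$ large enough. Hence $\Omega_{0,j}\setminus\overline{\Theta_j}$ is $(R_1,R_2)$-nearly annular for all sufficiently large $j$, contradicting our assumption and concluding the proof.

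The main obstacle is precisely the geometric passage from Hausdorff closeness to $W^{1,\infty}$ control of the radial parametrization: one must exploit convexity to upgrade the trivial $L^\infty$-bound into a Lipschitz one. Everything else is either semicontinuity (Lemma \ref{usc_lambda}) or a diameter control (Lemma \ref{isod}) already available in the paper.
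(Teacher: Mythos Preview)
Your proof is correct and follows the same contradiction scheme as the paper (which defers to \cite[Lemma 4.8]{cpp}): assume a sequence with vanishing deficit that is never nearly annular, apply Lemma \ref{lem:maxi} to obtain Hausdorff convergence to $A_{R_1,R_2}$, and reach a contradiction. You are actually more explicit than the paper about the geometric step upgrading Hausdorff closeness of a convex body to a ball into a $W^{1,\infty}$ bound on its radial parametrization; the paper's sketch negates ``nearly annular'' directly via the conditions $d_{\mathcal H}(\Omega_{0,j},B_{R_2})>R_2/2$ or $d_{\mathcal H}(\Theta_j,B_{R_1})>R_1/2$, leaving that convexity fact implicit.
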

Now that we have finally reduced ourselves to nearly annular sets, we can conclude the proof of Main Theorem 4.

\begin{proof}[Proof of Main Theorem 4]
It is an immediate consequence of Corollary \ref{cor:stabuco} and Lemma \ref{red}.
\end{proof}

\section{Further remarks and open problems}
\label{rem_sec}

In this section we collect some consequences of the previous reults and state some open problems arising from our analysis.

\subsection{Stability of the spherical shell for the Robin-Neumann problems with negative boundary parameter}

The techniques presented throughout the paper can be used to get analogous stability results for the spherical shell respectively for the Robin-Neumann eigenvalues with negative boundary parameter. We briefly recall the functional involved. Let us fix $\beta<0$. Given $\Omega_0\setminus\overline{\Theta}$ with $\Omega_0$ Lipschitz and $\Theta\subset\subset\Omega_0$ sufficiently smooth, the first Robin-Neumann eigenvalue of the $p$-Laplace operator with boundary parameter $\beta$ is the least number such that
\begin{equation*}
\begin{cases}
	-\Delta_p w=\tilde{\lambda}_p
(\beta, \Omega_0\setminus\overline{\Theta}) 
	|w|^{p-2}w  & \mbox{in}\ \Omega_0\setminus\overline{\Theta}\vspace{0.2cm}\\
|\nabla w|^{p-2}\dfrac{\partial w}{\partial \nu}+\beta |w|^{p-2} w=0&\mbox{on}\ \partial \Omega_0\vspace{0.2cm}\\ 
\dfrac{\partial w}{\partial \nu}=0&\mbox{on}\ \partial\Theta;
\end{cases}
\end{equation*}
$\tilde{\lambda}_p(\beta, \Omega)<0$ and it can be characterized in a variational way as follows: 
\begin{equation*}
\tilde{\lambda}_p
(\beta, \Omega_0\setminus\overline{\Theta})=\min_{\substack{\psi\in W^{1,p}(\Omega_0\setminus\overline{\Theta})\\ \psi\not \equiv0}}\frac{\ds\int_{\Omega_0\setminus\overline{\Theta}}|\nabla \psi|^p\:dx+\beta\int_{\partial\Omega_0}|\psi|^p\:d\mathcal{H}^{d-1}}{\ds\int_{\Omega_0\setminus\overline{\Theta}}|\psi|^p\:dx}.
\end{equation*}

The following result can be obtained repeating verbatim the arguments in Section \ref{main_sec_buco}, replacing in the definition of $\alpha_{hyb}$ the $L^q(\partial B_{R_2})$-normalized positive minimizer $z$ for $\tilde{\sigma}_{p,q}(A_{R_1,R_2})$ with the first positive $L^p(A_{R_1,R_2})$-normalized eigenfunction of $A_{R_1,R_2}$. Moreover, Theorem \ref{teo:robinp} gives a first positive answer to \cite[Open Problem 5.1]{cpp}.

\begin{theorem}\label{teo:robinp}
Let $0<R_1<R_2<+\infty$ and $p\in]1,+\infty[$. There exist two positive constants $C(p,q,d,R_1,R_2)$ and $\delta_0(p,q,d,R_1,R_2)$ such that, for every open set $\Omega_0\setminus\overline{\Theta}\in\mathcal{T}_{R_1,R_2}$, if $\tilde{\lambda}_p(\beta,A_{R_1,R_2})-\tilde{\lambda}_p(\beta,\Omega_0\setminus\overline{\Theta})\leq \delta_0$, then $\Omega_0\setminus\overline{\Theta}$ is $(R_1,R_2)$-nearly annular and
\begin{equation*}
\tilde{\lambda}_p(\beta,A_{R_1,R_2})-\tilde{\lambda}_p(\beta,\Omega_0\setminus\overline{\Theta}) \geq C(d,p,\beta,R_1,R_2)\alpha_{hyb}(\Omega_0\setminus\overline{\Theta})
.
\end{equation*}
\end{theorem}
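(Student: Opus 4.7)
The plan is to mirror, step by step, the strategy developed for Main Theorem 4, replacing the minimizer $z$ of $\tilde{\sigma}_{p,q}(A_{R_1,R_2})$ with the first positive normalized eigenfunction $\varphi$ of $\tilde{\lambda}_p(\beta,A_{R_1,R_2})$, normalized by $\|\varphi\|_{L^p(A_{R_1,R_2})}=1$. First I would verify that $\varphi$ is radially symmetric and monotone, a standard fact for the Robin--Neumann eigenvalue on spherical shells (e.g.\ as used in \cite{paoli2020sharp}): with $\beta<0$, the positive eigenfunction is radially increasing from $\partial B_{R_1}$ to $\partial B_{R_2}$, so that $\varphi_m=\varphi|_{\partial B_{R_1}}$, $\varphi_M=\varphi|_{\partial B_{R_2}}$, and $\ell(t):=|\nabla\varphi|_{\varphi=t}$ is well defined on $]\varphi_m,\varphi_M[$. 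Using this $\ell$, I build the web-function $w_{\Omega_0}$ exactly as in \eqref{GFerone}, and the auxiliary extension $\overline{\varphi}$ as in \eqref{def_z}.

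The next step is the analogue of Main Theorem 3 for $\tilde{\lambda}_p$: by the same coarea/Aleksandrov--Fenchel argument that gives \eqref{zp}--\eqref{dzp}, one obtains
\begin{equation*}
\int_{B_{R_2}}\!\!|\nabla\overline{\varphi}|^p\,dx\geq \int_{\Omega_0}\!\!|\nabla w_{\Omega_0}|^p\,dx,\qquad
\int_{B_{R_2}}\!\!\overline{\varphi}^p\,dx\geq \int_{\Omega_0}\!\! w_{\Omega_0}^p\,dx + \varphi_m^p(|B_{R_2}|-|\Omega_0|),
\end{equation*}
together with $\int_{\partial B_{R_2}}\overline{\varphi}^p\,d\mathcal{H}^{d-1}=\int_{\partial\Omega_0}w_{\Omega_0}^p\,d\mathcal{H}^{d-1}$. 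The volume constraint $|\Omega_0\setminus\overline{\Theta}|=|A_{R_1,R_2}|$ then produces, exactly as in the passage from \eqref{zp} to \eqref{zpbuco},
\begin{equation*}
\int_{A_{R_1,R_2}}\!\!\varphi^p\,dx \geq \int_{\Omega_0\setminus\overline{\Theta}}\!\! w_{\Omega_0}^p\,dx,
\end{equation*}
and analogously for the gradient. Plugging into the Rayleigh quotient (and noting $\beta<0$ only flips the sign of the boundary term, which cancels thanks to the equality of the $L^p$ boundary norms) yields $\tilde{\lambda}_p(\beta,A_{R_1,R_2})\geq \tilde{\lambda}_p(\beta,\Omega_0\setminus\overline{\Theta})$, reproving the maximality.

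For the quantitative enhancement I would replicate Propositions~\ref{pro:pro}, \ref{pro:buco2} and \ref{teo:buco3} verbatim. The quantitative Aleksandrov--Fenchel inequality \eqref{eq:afq} applied to the superlevel sets of $w_{\Omega_0}$ gives the same perimeter gap $P(A_{0,t})-P(\Omega_{0,t})\geq C(d)\tilde{G}^{-1}(t)d_{\mathcal{H}}^{(d+3)/(2(d-1))}(\Omega_0,B_{\Omega_0})$ for $t\in[T(\Omega_0),\varphi_M]$; integrating against the distribution functions $\mu,\nu$ and using Bernoulli's inequality produces
\begin{equation*}
\tilde{\lambda}_p(\beta,A_{R_1,R_2})-\tilde{\lambda}_p(\beta,\Omega_0\setminus\overline{\Theta}) \geq C_1(d,p,\beta,R_1,R_2)\,\tilde{\alpha}_{out}(\Omega_0),
\end{equation*}
where the outer asymmetry is now defined with $\varphi_M,\varphi_m$ in place of $z_M,z_m$. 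The inner asymmetry estimate is obtained by the same splitting over $\Theta\setminus K_{\Omega_0}$ versus $K_{\Omega_0}\setminus\Theta$ exploited in the proof of Proposition~\ref{teo:buco3}, giving
\begin{equation*}
\tilde{\lambda}_p(\beta,A_{R_1,R_2})-\tilde{\lambda}_p(\beta,\Omega_0\setminus\overline{\Theta}) \geq C_2(d,p,\beta,R_1,R_2)\,\tilde{\mathcal{A}}(\Theta;\Omega_0).
\end{equation*}
Taking the maximum gives the lower bound by $\alpha_{hyb}(\Omega_0\setminus\overline{\Theta})$.

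Finally, to reduce to nearly annular sets, I would reproduce the isodiametric control (Lemma~\ref{isod}) together with the Hausdorff upper semicontinuity of $\tilde{\lambda}_p(\beta,\cdot)$ and the uniqueness of $A_{R_1,R_2}$ as maximizer (from \cite{paoli2020sharp}) to obtain the analogue of Lemma~\ref{lem:maxi}; then, by the same contradiction argument as in Lemma~\ref{red}, there exists $\delta_0$ such that $\tilde{\lambda}_p(\beta,A_{R_1,R_2})-\tilde{\lambda}_p(\beta,\Omega_0\setminus\overline{\Theta})\leq\delta_0$ forces $\Omega_0\setminus\overline{\Theta}$ to be $(R_1,R_2)$-nearly annular. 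The main technical obstacle is verifying the radial monotonicity of $\varphi$ (essential for the definition of $\ell(t)$ and the coarea manipulations): for $\beta<0$ this is already established in the literature cited, but one must be careful to track the dependence of $\ell(\varphi_M)$ and of $T(A_{R_1,R_2})$ on $\beta$ so that the final multiplicative constant $C(d,p,\beta,R_1,R_2)$ stays strictly positive uniformly in the admissible class.
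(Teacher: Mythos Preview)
Your proposal is correct and follows exactly the approach the paper indicates: the paper's entire proof is the single sentence ``repeating verbatim the arguments in Section~\ref{main_sec_buco}, replacing in the definition of $\alpha_{hyb}$ the $L^q(\partial B_{R_2})$-normalized positive minimizer $z$ for $\tilde{\sigma}_{p,q}(A_{R_1,R_2})$ with the first positive $L^p(A_{R_1,R_2})$-normalized eigenfunction of $A_{R_1,R_2}$,'' and you have spelled this out in detail.

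One minor caveat worth flagging: the Rayleigh quotient for $\tilde{\lambda}_p(\beta,\cdot)$ has the volume integral in the \emph{denominator} and no $p$-th root, so the final algebraic manipulation is not literally the same as in Propositions~\ref{pro:buco2} and~\ref{teo:buco3} --- in particular Bernoulli's inequality is not the tool here. Instead, the deficits in $\int|\nabla w_{\Omega_0}|^p$ and $\int w_{\Omega_0}^p$ shrink both numerator and denominator of the test quotient, and one exploits the \emph{negativity} of $\tilde{\lambda}_p(\beta,A_{R_1,R_2})$ to turn the smaller denominator into a quantitative gain (e.g.\ $N/D'\le (N/D)\cdot(D/D')\le \tilde{\lambda}_p(\beta,A_{R_1,R_2})(1+c\,\tilde{\alpha}_{out})$ when $N<0$ and $D'\le D-c\,\tilde{\alpha}_{out}$). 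This is a routine adjustment and does not affect the validity of your plan.
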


\subsection{Open problems}
We conclude showing some possible perspective of research.

\begin{open} The question about the optimal geometries for $\sigma_{p,q}$ and $\tilde{\sigma}_{p,q}$ remains open regarding the case where $p<q<p^*$ and the minimizers for the associated variational problems on balls and spherical shells are not radial. In that scenario, such functions are the product of a radial component and of a component depending only on the distance from the north pole (see Section 5 in \cite{DoTo}). It would be interesting to understand what the expected maximizing shape is, at least numerically. If the expected maximum is still the ball or the spherical shell, it would be interesting to understand how to set up a dearrangement technique without having the radiality of the minimizers for the associated variational problems.
\end{open}

\begin{open}
The choice of constraining the outer perimeter and the volume of the holed domains $\Omega_0\setminus\overline{\Theta}$ for maximizing $\tilde{\sigma}_{p,q}$ is rather common and dates back to the pioneeristic work \cite{payne1961some}. The choice of fixing the perimeter of the convex set $\Omega$ in the maximization of $\sigma_{p,q}$ is \emph{trace-focused} and is naturally suggested by several  spectral maximization problems in which a competition between volume and surface integrals appears. Among them it is worth to recall, for instance, \cite{amato2024estimates,bfnt,cito2021quantitative} for the first (linear or nonlinear) Robin eigenvalue with negative boundary parameter and \cite{gavitone2020quantitative} for the first nontrivial Steklov eigenvalue. In both the frameworks of convex sets and domains with holes, the web function method naturally accommodates the choice of admissible sets and the constraints. It would be interesting to understand what happens modifying the constraints (e.g., maximizing $\sigma_{p,q}$ among convex sets with fixed \emph{measure}) or enlarging the class of admissble sets (e.g., starshaped domains).
\end{open}

\begin{open}
The hybrid asymmetry $\alpha_{hyb}(\cdot)$ is actually a distance from the spherical shell and enjoys the remarkable property of recognizing the more symmetric between two holed sets that have the same containter $\Omega_0$ and different holes, e.g. if they have the same inner set $\Theta$ in different relative positions. Anyway, we are aware that it is not immediate to understand its meaning and that it is not very elegant or clear at a first sight. Even its sharpness seems far from being true. For this reason, an interesting challenge could be to find a way to improve the asymmetry functionals for holed domains introduced here and in \cite{cpp}.
\end{open}

\section*{Acknowledgment}
S. Cito was supported by "Elliptic and parabolic problems, heat kernel estimates and spectral theory" PRIN 2022 project n. 20223L2NWK, funded by the Italian Ministry of University and
Research.

The author was supported by Gruppo Nazionale per l’Analisi
Matematica, la Probabilità e le loro Applicazioni (GNAMPA) of Istituto Nazionale di Alta
Matematica (INdAM).

\bibliographystyle{abbrvurl}
\bibliography{bibliography}
\end{document}